\newtheorem{theorem}{Theorem}
\newtheorem{proposition}[theorem]{Proposition}
\newtheorem{lemma}[theorem]{Lemma}
\newtheorem{corollary}[theorem]{Corollary}
\theoremstyle{definition}
\DeclareMathOperator{\xc}{xc}
\DeclareMathOperator{\delete}{\backslash}
\DeclareMathOperator{\contract}{\slash}
\DeclareMathOperator{\restrict}{|}
\DeclareMathOperator{\si}{si}
\newcommand{\R}{\mathbb R}
\newcommand{\T}{\mathcal T}
\newcommand{\conv}{\mathrm{conv}}
\newcommand{\rk}{\mbox{rk}}
\newcommand{\stp}{P_{\mathrm{spanning~tree}}}
\newcommand{\arbp}{P_{r\mathrm{-arborescence}}}
\newcommand{\arbdom}{P^\uparrow_{r\mathrm{-arborescence}}}
\newcommand{\cdom}{P^\uparrow_{\mathrm{circuit}}}
\title{Regular Matroids Have Polynomial Extension Complexity}
\author{Manuel Aprile \and Samuel Fiorini}
\begin{document}
\maketitle

\begin{abstract}
We prove that the extension complexity of the independence polytope of every regular matroid on $n$ elements is $O(n^6)$. Past results of Wong~\cite{wong1980integer} and Martin~\cite{Martin91} on extended formulations of the spanning tree polytope of a graph imply a $O(n^2)$ bound for the special case of (co)graphic matroids. However, the case of a general regular matroid was open, despite recent attempts~\cite{kaibel2016extended,weltge2015sizes,GurjarV17}. We also consider the extension complexity of circuit dominants of regular matroids, for which we give a $O(n^2)$ bound.
\end{abstract}


\section{Introduction}

Let $P$ be any polytope in $\R^d$. An \emph{extension} (or \emph{lift}) of $P$ is a polytope $Q \in \R^e$ such that $P=\pi(Q)$ for some affine map $\pi : \R^e \to \R^d$. The \emph{extension complexity} of $P$, denoted by $\xc(P)$, is the minimum number of facets of an extension of $P$. If $Ay \leq b$ is a linear description of $Q$, then $Ay \leq b$, $x = \pi(y)$ is called an \emph{extended formulation} of $P$ since
\(
x \in P \iff \exists y : Ay \leq b,\  x = \pi(y).
\)
Thus the extension complexity of a polytope can also be defined as the minimum number of \emph{inequality constraints} in an extended formulation.

Extended formulations are used and studied for a long time, while extension complexity was formally defined less than ten years ago. This definition was much inspired by the seminal work of Yannakakis~\cite{yannakakis1991expressing}. Recently, researchers tried to pin down the extension complexity of several families of polytopes, mainly in connection with combinatorial optimization. By now, we have a quite good understanding of the extension complexity of the polytopes associated to the main ``textbook paradigms'': flows, matchings, arborescences, traveling salesman tours and stable sets, see~\cite{fiorini2012linear,rothvoss2014matching,goos2018extension}. One notable exception is \emph{matroids}. 


Let $M$ be a matroid. We denote by $E(M)$ the set of elements of $M$ and $\mathcal{I}(M)$ the collection of its independent sets. Also, we denote by $\mathcal{B}(M)$ the collection of its bases. The \emph{independence polytope} of $M$ is the convex hull of the characteristic vectors of independent sets of $M$. Using the notation $P(M)$ for the independence polytope of $M$ and $\chi^I$ for the characteristic vector of independent set $I \in \mathcal{I}(M)$, we have
$$
P(M) = \conv \{\chi^I  \in \{0,1\}^{E(M)} \mid I \in \mathcal{I}(M)\}\,.
$$
Another polytope of interest is the \emph{base polytope} $B(M)$ of matroid $M$. The base polytope is the face of the independence polytope whose vertices are the vectors $\chi^B$, where $B \in \mathcal{B}(M)$. Hence,
$$
B(M) = \{x \in \R^{E(M)} \mid x \in P(M),\ x(E) = \rk(M)\}
$$
where $x(F) := \sum_{e \in F} x_e$ for $F \subseteq E(M)$ and $\rk(M)$ denotes the rank of $M$. Notice that every extended formulation for $P(M)$ yields an extended formulation for $B(M)$ with the same number of inequality constraints, hence $\xc(B(M)) \leq \xc(P(M))$. Letting $n$ denote the number of elements of $M$, we also have $\xc(P(M)) \leq \xc(B(M)) + 2n$ since $P(M) = \{x \in \R^{E(M)} \mid \exists y \in B(M),\ \mathbf{0} \leq x \leq y\}$. 

A \emph{regular matroid} is a matroid that is representable over every field, or, equivalently, that is representable over the reals by a totally unimodular matrix. Regular matroids form a fundamental class of matroids, generalizing graphic and cographic matroids. Let $G$ be a graph. Recall that the elements of the corresponding \emph{graphic matroid} $M(G)$ (also called the \emph{cycle matroid} of $G$) are the edges of $G$, and the independent sets are the edge subsets $F \subseteq E(G)$ that define a forest in $G$. The \emph{cographic matroid} $M^*(G)$ is the dual matroid of $M(G)$. Graphic and cographic matroids are regular. Also, matroids that are both graphic and cographic are exactly those of the form $M(G)$ for some \emph{planar} graph $G$.

Wong~\cite{wong1980integer} and Martin~\cite{Martin91} proved that $\xc(B(M)) = O(|V(G)| \cdot |E(G)|)$ for all graphic matroids $M = M(G)$. It follows directly that $\xc(P(M)) = O(n^2)$ for all graphic or cographic matroids $M$ on $n$ elements. In case $M$ is both graphic and cographic, then $\xc(P(M)) = O(n)$ follows from Williams~\cite{Williams01}.

Let $n$ and $r$ respectively denote the number of elements and rank of $M$. In \cite{kaibel2016extended,weltge2015sizes}, it is claimed that $\xc(P(M)) = O(n^2)$ whenever $M$ is regular. The first version of~\cite{GurjarV17} claimed an even better $O(r \cdot n)$ bound. However, both papers have a fundamental flaw and appear to be difficult to fix\footnote{Actually, \cite{GurjarV17} has been withdrawn after a few months, and \cite{kaibel2016extended} has been recently withdrawn, see \cite{Kaibel2019}.}, and as a result no polynomial bound is currently known. In this paper, we give the first polynomial upper bound on the extension complexity of the independence polytope of a regular matroid.

\begin{theorem}[main theorem]\label{thm:main}
There exists a constant $c_0$ such that $\xc(P(M)) \leq c_0 \cdot n^{6}$ for all regular matroids $M$ on $n$ elements.
\end{theorem}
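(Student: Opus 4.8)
The plan is to build the extended formulation recursively along Seymour's decomposition of regular matroids. Seymour's theorem states that every regular matroid $M$ is obtained from graphic matroids, cographic matroids, and copies of the sporadic matroid $R_{10}$ by repeatedly applying $1$-, $2$-, and $3$-sums. First I would cast this in a quantitative form tailored to the application: $M$ admits a decomposition \emph{tree} whose leaves carry graphic, cographic, or $R_{10}$ matroids, whose internal nodes carry $k$-sum operations with $k\in\{1,2,3\}$, whose total number of nodes is $O(n)$, and in which the ground sets of all pieces have total size $O(n)$ with every piece again regular. (Invoking Seymour's theorem in a form where the pieces are genuinely smaller requires splitting off $1$- and $2$-separations first and only then treating $3$-separations on highly connected pieces; this connectivity reduction is standard, and for independence polytopes the $1$- and $2$-sum cases are by far the easiest.)

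For the base cases: if $M=M(G)$ is graphic, the Wong--Martin extended formulation gives $\xc(B(M))=O(|V(G)|\cdot|E(G)|)=O(n^2)$, and then $\xc(P(M))\le\xc(B(M))+2n=O(n^2)$ via the identity $P(M)=\{x:\exists\,y\in B(M),\ \mathbf 0\le x\le y\}$ recalled in the introduction. If $M=M^*(G)$ is cographic, its bases are complements of spanning trees of $G$, so $B(M^*(G))$ is a reflection of $B(M(G))$ and the same $O(n^2)$ bound for $\xc(P(M^*(G)))$ follows in the same way. Finally $R_{10}$ is a single fixed matroid on $10$ elements, so $\xc(P(R_{10}))=O(1)$.

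The technical heart is a \emph{composition lemma}: if $M$ is a $k$-sum of regular matroids $M_1$ and $M_2$, then $\xc(P(M))$ is bounded by $\mathrm{poly}(n)$ plus a controlled combination of the extension complexities of the independence polytopes of $M_1$ and $M_2$ (or of the matroids obtained from them by deleting or contracting the at most three shared elements). To prove it I would first describe the independent sets of a $k$-sum in terms of those of the parts: for $k=1$ one simply has $P(M)=P(M_1)\times P(M_2)$; for $k=2$, with $M_1,M_2$ glued along an element $p$, a set $I$ is independent in $M$ iff $I\cap E_i$ is independent in $M_i$ for $i=1,2$ and $p$ fails to lie in the closure of $I\cap E_1$ in $M_1$ or in the closure of $I\cap E_2$ in $M_2$, which exhibits $P(M)$ as a projection of the convex hull of a bounded number of products of faces of the polytopes of $M_1$ and $M_2$; and for $k=3$, gluing along a triangle $T$, one runs the analogous but finer analysis according to how the trace of $I$ on each side spans subsets of $T$. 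The guiding principle is to use only the operations that are cheap for extension complexity — products, intersections with affine subspaces, and convex hulls of a bounded number of polytopes — and to re-use one extended formulation per side across all disjunctive cases, so that the extension complexities of the pieces \emph{add} (up to the $\mathrm{poly}(n)$ overhead) rather than multiply.

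Unfolding the composition lemma over the decomposition tree then expresses $\xc(P(M))$ as a controlled accumulation, over the $O(n)$ nodes, of per-step overheads and $O(n^2)$ base-case bounds; carrying out this accounting carefully — while absorbing the bounded blow-up caused by the disjunctions in the $2$- and $3$-sum cases and by re-decomposing the auxiliary matroids — yields the stated bound $\xc(P(M))\le c_0 n^6$. I expect the main obstacle to be the $3$-sum case of the composition lemma: unlike $1$- and $2$-sums, gluing along a triangle forces a genuine case distinction on how an independent set uses the three internal elements, and one must verify both that every resulting piece is again a \emph{regular} matroid on strictly fewer elements (so the induction is legitimate) and that recombining them costs only polynomial additive overhead. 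A secondary difficulty is the precise bookkeeping that turns the additive-per-step composition, together with the shape of the (possibly very unbalanced) decomposition tree, into the global polynomial bound with the claimed exponent.
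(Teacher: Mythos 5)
There is a genuine gap, and it sits exactly where you locate your ``main obstacle'': the $3$-sum case of the composition lemma. For $1$-sums and $2$-sums the additive bound $\xc(P(M_1\oplus_t M_2))\le \xc(P(M_1))+\xc(P(M_2))$ does hold (product, respectively intersection with one hyperplane), but for $3$-sums no such additive bound is known, and the paper explicitly refrains from claiming it. Your plan to ``re-use one extended formulation per side across all disjunctive cases'' is precisely the step that has no proof: gluing along a triangle $T$ forces a case analysis over how each side spans subsets of $T$, and the natural disjunctive construction uses several faces/minors of $P(M_1)$ and of $P(M_2)$, so the best known ``symmetric'' bound has the form $\xc(P(M_1\oplus_3 M_2))\le c_2\,\xc(P(M_1))+c_2\,\xc(P(M_2))$ with $c_2=4$, not $c_2=1$. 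Once both coefficients exceed $1$, unfolding along a Seymour decomposition tree — which can have depth $\Theta(n)$ and be very unbalanced — multiplies these constants along root-to-leaf paths and yields an exponential, not polynomial, bound. This is exactly the fundamental flaw that caused the earlier claimed $O(n^2)$ proofs to be withdrawn, so the assertion that the pieces' extension complexities ``add up to a poly$(n)$ overhead'' cannot simply be granted; without it your final accounting step collapses (and, tellingly, if it were true your argument would give roughly $O(n^2)$ or $O(n^3)$, not $n^6$ — the exponent $6$ in the actual theorem is an artifact of a quite different recursion).

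The paper escapes this trap with two ideas that are absent from your proposal. First, an \emph{asymmetric} formulation for $3$-sums: one side enters with coefficient $16$, while the other side enters with coefficient $1$, at the price of replacing $\xc(P(M_1))$ by the extension complexity of a \emph{pair} of nested polytopes $(P_T(M_1),Q_T(M_1))$ encoding two coupled copies of the triangle variables. Second, a \emph{star decomposition}: using a centroid-type argument on the decomposition tree (every tree has a node whose removal leaves components of at most half the total weight), one writes $M\stackrel{\star}{=}M_0\oplus_3 M_1\oplus_3\cdots\oplus_3 M_k$ where $M_0$ is a single graphic or cographic matroid and each $M_i$ has at most $n/2+3$ elements. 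The coefficient-$1$ side is then $M_0$ with all $k$ triangles handled \emph{simultaneously} by a tweaked Wong/Martin flow formulation of size $O(|V|\cdot|E|)=O(n^2)$ (this is where the graphic/cographic structure is genuinely used, not just as a base case), while the factor $16$ only hits pieces whose size has dropped by a constant factor; the induction $\xc(P(M))\le c_1 n^2+16\sum_i \xc(P(M_i))$ with $|E(M_i)|\le n/2+3$ is what closes with exponent $6$. In short, your base cases and your treatment of $1$- and $2$-sums match the paper, but the decisive mechanism for $3$-sums — asymmetry plus the balanced star decomposition — is missing, and without it the proposed induction does not yield a polynomial bound.
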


Our proof of Theorem~\ref{thm:main} is by induction on $n$. We rely on the Seymour's celebrated characterization of regular matroids. (A formal definition of $t$-sum for $t \in [3]$ can be found below, in Section~\ref{sec:decomp}.) 

\begin{theorem}[Seymour's decomposition theorem~\cite{seymour1980decomposition}]\label{thm:seymourdecomposition}
A matroid is regular if and only if it is obtained by means of $1$-, $2$- and $3$-sums, starting from graphic and cographic matroids and copies of a certain $10$-elements matroid $R_{10}$.
\end{theorem}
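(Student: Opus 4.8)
\emph{Proof idea.} The ``if'' direction is routine: each of $M(G)$, $M^*(G)$ and $R_{10}$ is representable over $\R$ by a totally unimodular (TU) matrix --- the oriented incidence matrix of $G$, a TU representation of the dual, and an explicit $5\times 10$ TU matrix, respectively --- and regularity is preserved under $1$-, $2$- and $3$-sums: given TU representations $[\,I \mid A_1\,]$, $[\,I \mid A_2\,]$ of $M_1$, $M_2$ in suitable form, a TU representation of $M_1 \oplus_t M_2$ is produced by an explicit block construction, and the hypothesis that the distinguished common set is a triangle (or triad) of size $t$ is exactly what keeps the resulting matrix TU. So the substance is the ``only if'' direction. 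The plan is to prove by induction on $n=|E(M)|$ that every regular matroid $M$ is graphic, cographic, isomorphic to $R_{10}$, or equal to a $t$-sum ($t\in[3]$) of two regular matroids each on fewer than $n$ elements; the induction terminates because a $t$-sum satisfies $|E(M_1)|+|E(M_2)| = n + 2(t-1)$ with $|E(M_i)| > 2(t-1)$, so each piece is strictly smaller.

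First I would reduce to the $3$-connected case. If $M$ is disconnected it is the $1$-sum of its connected components, each a proper (hence regular) minor; if $M$ is connected but not $3$-connected it has a $2$-separation, and then $M$ is the $2$-sum of two matroids, each a proper minor of $M$ and so regular and smaller. Thus it remains to show that a $3$-connected regular matroid is graphic, cographic, isomorphic to $R_{10}$, or a $3$-sum of two smaller regular matroids. The engine is Seymour's Splitter Theorem~\cite{seymour1980decomposition}: within a minor-closed class, a $3$-connected matroid $M$ with a proper minor isomorphic to a fixed $3$-connected $N$ (not a wheel or whirl of maximal size in the class) has a $3$-connected minor on $|E(M)|-1$ elements that still has an $N$-minor; consequently, if no $3$-connected single-element extension or coextension of $N$ lies in the class, then $N$ is a \emph{splitter}, i.e.\ the only $3$-connected member of the class with an $N$-minor is $N$ itself.

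The $3$-connected case then breaks into three parts. \emph{(i)} If $M$ has an $R_{10}$-minor, then $M\cong R_{10}$: one verifies the finite statement that $R_{10}$ has no $3$-connected single-element regular extension or coextension (using that $R_{10}\setminus e\cong M(K_{3,3})$ and $R_{10}/e\cong M^*(K_{3,3})$ for every element $e$), so $R_{10}$ is a splitter for the regular matroids. \emph{(ii)} If $M$ has an $R_{12}$-minor but no $R_{10}$-minor, then a connectivity argument shows that the $3$-separation of $R_{12}$ (into two halves of size $6$) propagates to a $3$-separation $(A,B)$ of $M$ with $|A|,|B|\ge 4$; combined with $3$-connectivity this displays $M$ as a $3$-sum $M_1\oplus_3 M_2$ of two regular minors of $M$, each on fewer than $n$ elements. \emph{(iii)} If $M$ has neither an $R_{10}$- nor an $R_{12}$-minor, then $M$ is graphic or cographic.

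Part \emph{(iii)} is the crux. Suppose $M$ is neither graphic nor cographic. By Tutte's excluded-minor characterization of graphic matroids, a regular non-graphic matroid has a minor isomorphic to $M^*(K_5)$ or $M^*(K_{3,3})$, and dually a regular non-cographic matroid has a minor isomorphic to $M(K_5)$ or $M(K_{3,3})$; hence $M$ has minors of both kinds. Starting from a small $3$-connected minor and applying the Splitter Theorem repeatedly, one builds a chain of $3$-connected regular matroids that grows one element at a time from that minor up to $M$; a careful analysis of which single-element extensions and coextensions of (near-)graphic and (near-)cographic matroids stay regular and $3$-connected shows that somewhere along the chain an $R_{10}$- or an $R_{12}$-minor must appear, contradicting the hypothesis of \emph{(iii)}. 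This final case analysis --- essentially the content of Seymour's original argument --- is the main obstacle; parts \emph{(i)}, \emph{(ii)} and the reduction to $3$-connectivity are comparatively short.
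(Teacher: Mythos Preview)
The paper does not prove Theorem~\ref{thm:seymourdecomposition}; it is quoted from Seymour's original paper~\cite{seymour1980decomposition} and used as a black box. What the paper proves is the refinement Theorem~\ref{thm:seymourdecomposition3conn}, and even there it invokes the key structural facts about $R_{10}$ and $R_{12}$ (Lemma~\ref{prop:R10,R12}) by citation rather than establishing them. So there is no ``paper's own proof'' to compare your sketch against.

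That said, your outline is the standard one and matches the ingredients the paper quotes in the appendix: reduce to the $3$-connected case via $1$- and $2$-sums; then use that $R_{10}$ is a splitter for the class of regular matroids (your part~(i), corresponding to Lemma~\ref{prop:R10,R12}(1)); that an $R_{12}$-minor forces a non-trivial $3$-separation (your part~(ii), corresponding to Lemma~\ref{prop:R10,R12}(2)); and that a $3$-connected regular matroid with neither minor is graphic or cographic (your part~(iii), corresponding to Lemma~\ref{prop:R10,R12}(3)). Your acknowledgement that part~(iii) is the hard step, and that your description of it is only a plan rather than an argument, is honest: the actual case analysis in~\cite{seymour1980decomposition} is substantial and does not reduce to the one-sentence chain argument you sketch. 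If you intend to supply a proof rather than a reference, that is where the real work lies; otherwise, citing~\cite{seymour1980decomposition} as the paper does is appropriate.
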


Let $M$ be a regular matroid on $n$ elements. If $M$ is either graphic, cographic or $R_{10}$, then from~\cite{wong1980integer,Martin91} we directly have $\xc(P(M)) 
\leq c_0 \cdot n^{6}$, provided that $c_0\geq 2$. 
 Next, assume that $M$ is a $t$-sum  of two smaller regular matroids $M_1$ and $M_2$ for some $t \in [2]$ (we write $M=M_1\oplus_t M_2$). Then, using the following simple bound we are done by induction. 
 
\begin{lemma}[see \cite{kaibel2016extended,weltge2015sizes} or \cite{aprile20182}] \label{lem:simplebound} For $t \in [2]$,
$$
\xc(P(M_1 \oplus_t M_2)) \leq \xc(P(M_1)) + \xc(P(M_2))\,.
$$
\end{lemma}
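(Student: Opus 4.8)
The plan is to build an extended formulation for $P(M_1 \oplus_t M_2)$ by gluing together extended formulations for $P(M_1)$ and $P(M_2)$ along the shared structure induced by the $t$-sum. Recall that for $t \in \{1,2\}$ the $t$-sum $M = M_1 \oplus_t M_2$ has ground set $E(M) = E(M_1) \triangle E(M_2)$; for $t=1$ the sets $E(M_1), E(M_2)$ are disjoint and $M$ is the direct sum, while for $t=2$ they share a single common element $e$, which is deleted in the sum. The first step is to recall (or establish) the basic characterization of the independent sets of $M$ in terms of those of $M_1$ and $M_2$: for $t=1$, $I \in \calI(M)$ iff $I \cap E(M_1) \in \calI(M_1)$ and $I \cap E(M_2) \in \calI(M_2)$; for $t=2$, $I \in \calI(M)$ iff there is a choice of "state" $s \in \{0,1\}$ of the common element $e$ such that $I_1 := (I \cap E(M_1)) \cup \{e : s=1\} \in \calI(M_1)$ and $I_2 := (I \cap E(M_2)) \cup \{e : s=1\} \in \calI(M_2)$, with the usual caveat about the basis/independent-set bookkeeping at $e$. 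This is standard matroid theory and can be cited.

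The key step is then to write
\[
P(M) = \Bigl\{ x \in \R^{E(M)} \ \Big|\ \exists\, y^{(1)} \in P(M_1),\ y^{(2)} \in P(M_2) : y^{(1)}|_{E(M) \cap E(M_1)} = x|_{\cdots},\ \dots,\ y^{(1)}_e = y^{(2)}_e \Bigr\},
\]
i.e.\ take the product $P(M_1) \times P(M_2)$, impose the linear equation $y^{(1)}_e = y^{(2)}_e$ identifying the coordinate of the common element (only needed for $t=2$), and project onto the coordinates indexed by $E(M)$. One must check two inclusions: that every vertex $\chi^I$ of $P(M)$ lifts to a point of this set (using the independent-set characterization above to produce $I_1, I_2$, then pairing $\chi^{I_1}$ with $\chi^{I_2}$), and conversely that the projection of any point in the set lies in $P(M)$ — here one uses that a vertex of $P(M_1) \times P(M_2)$ subject to the identification is a pair $(\chi^{I_1}, \chi^{I_2})$ with matching $e$-coordinate, which projects to a vertex $\chi^I$ with $I \in \calI(M)$, and that projections of polytopes are polytopes so it suffices to track vertices. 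Since adding the single equation $y^{(1)}_e = y^{(2)}_e$ and projecting do not increase the facet count, we obtain an extension of $P(M)$ with at most $\xc(P(M_1)) + \xc(P(M_2))$ facets, which is exactly the claimed bound.

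I expect the main obstacle to be purely notational/bookkeeping rather than conceptual: getting the $t=2$ case exactly right, in particular the handling of the common element $e$ and the precise statement of which independent sets of $M_1$ and $M_2$ correspond to a given independent set of $M$ (the asymmetry in how $e$ is treated, and the fact that one works with the independence polytope rather than the base polytope, means one should double-check the "$\mathbf{0} \le x \le y$" type slack at $e$). It may be cleanest to reduce to the base-polytope statement first, or simply to invoke the fact that this bound is already recorded in \cite{kaibel2016extended,weltge2015sizes,aprile20182} and reproduce the short argument for completeness; the $t=1$ case is immediate since $P(M_1 \oplus_1 M_2) = P(M_1) \times P(M_2)$ and extension complexity is subadditive under products.
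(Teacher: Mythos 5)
Your $t=1$ case is fine, but the $t=2$ case contains a genuine error, both in your independence characterization and in the linear constraint you impose. For the $2$-sum with common element $e$, it is not true that $I\in\calI(M)$ as soon as there is a common state $s\in\{0,1\}$ with $(I\cap E(M_1))\cup\{e : s=1\}$ and $(I\cap E(M_2))\cup\{e : s=1\}$ both independent: with $s=0$ this only asks that $I\cap E(M_1)$ and $I\cap E(M_2)$ be independent, which is strictly weaker. The correct statement is that $I\in\calI(M)$ iff $I\cap E(M_1)\in\calI(M_1)$, $I\cap E(M_2)\in\calI(M_2)$, \emph{and} $e$ can be added to at least one of the two sides while preserving independence. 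Consequently, the identification $y^{(1)}_e=y^{(2)}_e$ yields a set strictly larger than $P(M)$. Concretely, let $M_1$ and $M_2$ be the cycle matroids of two triangles $\{e,a,a'\}$ and $\{e,b,b'\}$; then $M=M_1\oplus_2 M_2$ is the cycle matroid of a $4$-cycle, so $P(M)$ satisfies $x_a+x_{a'}+x_b+x_{b'}\leq 3$. Yet the all-ones point on $\{a,a',b,b'\}$ lies in your projection: take $s=0$ and pair the vertex $\chi^{\{a,a'\}}$ of $P(M_1)$ with the vertex $\chi^{\{b,b'\}}$ of $P(M_2)$, both having $e$-coordinate $0$. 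So your lifted set does not project into $P(M)$, and the claim "a matching $e$-coordinate pair projects to a vertex $\chi^I$ with $I\in\calI(M)$" fails exactly at $e$-coordinate $0$.

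The fix, and what the paper's one-line sketch means by "intersecting $P(M_1)\times P(M_2)$ with a single hyperplane," is to impose the complementarity constraint $y^{(1)}_e+y^{(2)}_e=1$ instead of $y^{(1)}_e=y^{(2)}_e$, and then project out both $e$-coordinates. Every $I\in\calI(M)$ lifts (set $y^{(1)}_e=1$ if $(I\cap E(M_1))+e\in\calI(M_1)$, otherwise $y^{(2)}_e=1$), and conversely, writing $(x^1,y^{(1)}_e)\in P(M_1)$ and $(x^2,y^{(2)}_e)\in P(M_2)$ as convex combinations of characteristic vectors of independent sets, the complementarity of the $e$-coordinates lets you couple the two decompositions so that in every paired $(A,B)$ either $e\in A$ or $e\in B$, whence $(A\setminus\{e\})\cup(B\setminus\{e\})\in\calI(M)$; note that some such convex-combination (or rank-inequality) argument is needed anyway, since intersecting the product with a hyperplane can create non-integral vertices, so "tracking vertices" of the lift is not enough. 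With this replacement the counting is as you say: one added equation and a projection create no new facets, giving $\xc(P(M_1\oplus_2 M_2))\leq\xc(P(M_1))+\xc(P(M_2))$.
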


For completeness, here is a proof sketch for Lemma~\ref{lem:simplebound}: if $t = 1$ then $P(M_1 \oplus_t M_2)$ is simply the Cartesian product $P(M_1) \times P(M_2)$, and if $t = 2$ then $P(M_1 \oplus_t M_2)$ can be obtained by intersecting $P(M_1) \times P(M_2)$ with a single hyperplane.

Since we cannot prove Lemma~\ref{lem:simplebound} for $t = 3$, we switch to a different strategy to treat the remaining case. Instead, we prove that $M$ has a special decomposition as described in the next result.

\begin{lemma} \label{lem:star_decomposition}
Let $M$ be a regular matroid on $n$ elements that is neither graphic, nor cographic, nor $R_{10}$, and that is neither a $1$-sum nor a $2$-sum. There exist matroids $M_0$, $M_1$, \ldots, $M_k$, for $1\leq k \leq n/4$, such that:
\begin{enumerate}[label=(\roman*)]
    \item $M_0$ is graphic or cographic and has $|E(M_0)| \leq n$,
    \item $M_1$, \ldots, $M_k$ are mutually disjoint regular matroids, with $|E(M_i)| \leq n/2 + 3$ for $i \in [k]$,
    \item $M$ can be obtained from $M_0$ by simultaneously performing a $3$-sum with $M_i$ for $i \in [k]$.
\end{enumerate}
\end{lemma}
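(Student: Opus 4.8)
The plan is to unfold $M$ as a ``star'' of $3$-sums and to take the centre of the star to be balanced. First I would observe that $M$ is $3$-connected: since $M$ is not a $1$-sum it is connected, and since it is not a $2$-sum it has no $2$-separation, so, being connected with no $2$-separation, it is $3$-connected. Then, using Theorem~\ref{thm:seymourdecomposition} together with the hypotheses that $M$ is neither graphic, cographic nor $R_{10}$, I would apply Seymour's theorem recursively to write $M$ as an iterated $3$-sum of $3$-connected regular matroids, each summand being graphic, cographic, or decomposable further, and each having at least $7$ elements. A point to check is that no copy of $R_{10}$ arises as a summand: every summand of a $3$-sum contains a triangle, whereas $R_{10}$ has none (each single-element deletion of $R_{10}$ is isomorphic to $M(K_{3,3})$, which has girth $4$). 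Re-associating the $3$-sums in the standard way, this data packages into a tree $\mathcal{T}$ whose nodes are basic pieces --- each graphic or cographic with at least $7$ elements --- and whose edges are $3$-sums, each carrying a triangle shared by its two endpoints; evaluating $\mathcal{T}$ returns $M$. Since $M$ is not basic, $\mathcal{T}$ has at least two nodes.

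Next I would do the bookkeeping. A $3$-sum along a triangle removes the three shared elements from both sides, so if $\mathcal{T}$ has $N$ nodes then $n=|E(M)| = \sum_{P}|E(P)| - 6(N-1)$; putting $w(P) := |E(P)|-6 \geq 1$, the total weight is $W := \sum_P w(P) = n-6$, and for any connected subtree $S$ the matroid $M_S$ obtained by evaluating $S$ satisfies $|E(M_S)| \leq w(S)+6$, where $w(S) := \sum_{P\in S} w(P)$. Now let $M_0$ be a centroid of the weighted tree $(\mathcal{T},w)$ --- a node such that every component of $\mathcal{T}-M_0$ has weight at most $W/2$; such a node always exists. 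Then $M_0$ is graphic or cographic and $|E(M_0)| = w(M_0)+6 \leq W+6 = n$, which is (i). Writing $S_1,\dots,S_k$ for the components of $\mathcal{T}-M_0$ (so $k \geq 1$, as $\mathcal{T}$ has at least two nodes), letting $T_i$ be the triangle on the edge from $S_i$ to $M_0$, and setting $M_i := M_{S_i}$, I would then check that $M$ is the simultaneous $3$-sum of $M_0$ with the regular matroids $M_1,\dots,M_k$ along $T_1,\dots,T_k$, which gives (iii); this last step hinges on the $T_i$ being pairwise disjoint in $M_0$, which I come back to below.

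It remains to bound the sizes. Each $M_i$ satisfies $|E(M_i)| \leq w(S_i)+6 \leq W/2+6 = n/2+3$, which is (ii). Since every summand of a $3$-sum has at least $7$ elements, $|E(M_i)| \geq 7$, so its ``private part'' $E(M_i)\setminus T_i$ has at least $4$ elements; as distinct basic pieces meet only along the triangles on the edges joining them, these private parts are pairwise disjoint subsets of $E(M)$, whence $4k \leq n$ and $k \leq n/4$.

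The delicate step is the first one: converting Seymour's recursive $3$-sum decomposition into the clean tree $\mathcal{T}$ and, crucially, arranging that the triangles on the edges of $\mathcal{T}$ incident to the chosen node $M_0$ can be made pairwise disjoint inside $M_0$ --- this is exactly what is needed for the simultaneous $3$-sum in (iii) to be well defined and for $M_1,\dots,M_k$ to be mutually disjoint. I expect this to require the detailed structure theory of $3$-sums (how triangles and triads sit inside a $3$-sum, and re-associativity), whereas the centroid argument and the arithmetic in the last two paragraphs are routine once $\mathcal{T}$ is in hand.
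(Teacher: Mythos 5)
Your proposal is correct and follows essentially the same route as the paper: the paper's proof invokes Theorem~\ref{thm:seymourdecomposition3conn} (the refined decomposition tree using only $3$-sums of graphic and cographic pieces, proved in the appendix, where the triangle-disjointness and re-association issues you flag are handled via Lemma~\ref{prop:3separationtriangles} and almost $3$-connectivity), then picks a centroid node via Lemma~\ref{lem:weightedtree} and performs the same size and $k\leq n/4$ counting. Your only deviations are cosmetic: a slightly different but equivalent weight function ($|E(P)|-6$ versus $|E(M)\cap E(M_v)|$), bounding $|E(M_0)|\leq n$ arithmetically rather than via $M_0$ being a minor of $M$, and excluding $R_{10}$ summands by its trianglelessness rather than by the no-$R_{10}$-minor property of $3$-connected regular matroids.
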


We call a decomposition as in Lemma~\ref{lem:star_decomposition} a \emph{star decomposition} and write $M \stackrel{\star}{=} M_0 \oplus_3 M_1 \oplus_3 \cdots \oplus_3 M_k$. For such (regular) matroids $M$, we prove the following upper bound on the extension complexity of $P(M)$.

\begin{lemma} \label{lem:star_bound}
There exists a constant $c_1$ such that 
$$
\xc(P(M)) \leq c_1 \cdot |E(M_0)|^2 + 16 \sum_{i=1}^k \xc(P(M_i))
$$
for every matroid $M$ that admits a star decomposition $M \stackrel{\star}{=} M_0 \oplus_3 M_1 \oplus_3 \cdots \oplus_3 M_k$.
\end{lemma}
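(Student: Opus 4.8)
The plan is to build an extended formulation for $P(M)$ by "gluing" an explicit formulation for the part coming from $M_0$ with the recursively-obtained formulations for $P(M_1), \dots, P(M_k)$, paying a constant factor (the $16$) and a quadratic overhead ($c_1 |E(M_0)|^2$) for the gluing. First I would recall the structure of a $3$-sum: if $N = N_1 \oplus_3 N_2$, the two matroids share a common triangle $T = \{e_1, e_2, e_3\}$ (three elements forming a circuit and a cocircuit in each part), $E(N) = (E(N_1) \cup E(N_2)) \setminus T$, and a set $F \subseteq E(N)$ is independent in $N$ iff it can be "completed" on the triangle consistently in both parts. Concretely, one shows there is a description of $P(N)$ of the form: $x \in P(N)$ iff there exist $z \in \R^T$ and points $x^{(1)} \in P(N_1)$, $x^{(2)} \in P(N_2)$ whose restrictions to $E(N_i) \setminus T$ agree with $x$ and whose restrictions to $T$ both equal $z$. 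In the star setting all $M_i$, $i \in [k]$, are attached to $M_0$ along disjoint triangles $T_i \subseteq E(M_0)$, so the gluing variables for different $i$ do not interact. This is the representation I would make precise first, citing the $3$-sum definition from Section~\ref{sec:decomp} and, if available, the polyhedral description of $3$-sums from earlier in the paper (otherwise proving it directly from totally unimodular representations, which is routine but is the one place real work is hidden).

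Given that representation, the formulation for $P(M)$ reads: there exist $z^{(i)} \in \R^{T_i}$ for $i \in [k]$, a point in (an extension of) $P(M_0)$ agreeing with the $z^{(i)}$ on each $T_i$, and points in (extensions of) $P(M_i)$ agreeing with $z^{(i)}$ on $T_i$ and with $x$ on $E(M_i) \setminus T_i$. The number of inequalities is $\xc(P(M_0))$ plus $\sum_{i=1}^k \xc(P(M_i))$ plus a bounded number of equations coupling the shared coordinates (which cost nothing, or at most $2$ per coordinate). Since $M_0$ is graphic or cographic, $\xc(P(M_0)) \leq \xc(B(M_0)) + 2|E(M_0)| = O(|E(M_0)|^2)$ by Wong--Martin, giving the $c_1 |E(M_0)|^2$ term. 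The subtle point is where the constant $16$ (rather than $1$) in front of $\sum \xc(P(M_i))$ comes from: a single $3$-sum along a triangle may require, instead of just $P(M_i)$, a constant number of related polytopes — e.g. $P(M_i)$ together with $P(M_i / e)$, $P(M_i \setminus e)$ and further minors obtained by deleting/contracting elements of the triangle $T_i$, to handle all $2^3 = 8$ ways the glued solution can sit on the triangle, and possibly a factor $2$ more from passing between independence and base polytopes. Each such minor has extension complexity at most that of $P(M_i)$ up to an additive $O(|E(M_i)|)$, which is absorbed; bookkeeping these $\le 16$ pieces and taking their "disjunctive-like" common refinement along the triangle is what produces the stated factor. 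I would set this up so that the triangle coordinates $z^{(i)}$ are forced (by the agreement equations and the $0/1$ structure) to lie in a constant-size set, and for each value one plugs in the appropriate minor of $M_i$.

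I expect the main obstacle to be the second bullet: proving cleanly that a $3$-sum polytope $P(N_1 \oplus_3 N_2)$ admits an extended formulation of size $O(\xc(P(N_1)) + \xc(P(N_2)) + |T|)$ — equivalently, the failure of the naive Lemma~\ref{lem:simplebound} for $t=3$ must be circumvented by allowing the constant blow-up and by using minors rather than the matroids themselves. Making the "$\le 16$ minors suffice" argument precise requires understanding exactly which independent sets of $N_1$ and $N_2$ can be combined, i.e. a careful case analysis on $F \cap T_i$-type patterns using that $T_i$ is simultaneously a circuit and a cocircuit; once that combinatorial lemma is in hand, assembling the extended formulation and verifying the inequality count is straightforward linear-algebra bookkeeping. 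A secondary technical point is ensuring the bound is stated for $P(M)$ and not merely $B(M)$, which the $+2n$-type conversions handle but which must be threaded through the $M_0$ term and each $M_i$ term without breaking the claimed constants $c_1$ and $16$; choosing $c_1$ large enough at the end absorbs all lower-order additive terms.
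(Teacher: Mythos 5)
Your central claim---that $x \in P(N_1 \oplus_3 N_2)$ if and only if there exist a \emph{single} vector $z \in \R^{T}$ and points of $P(N_1)$, $P(N_2)$ agreeing with $x$ off the triangle and with $z$ on it---is exactly the naive gluing that this paper is written to avoid. That representation (and the $\xc(P(M_1)) + \xc(P(M_2))$-type bound it would yield, i.e.\ Lemma~\ref{lem:simplebound} for $t=3$) is the unproven step behind the withdrawn claims of \cite{kaibel2016extended,GurjarV17}; the paper states it cannot prove it and in Section~\ref{sec:discussion} even conjectures that the simple bound \emph{fails} for some regular matroids. The difficulty is the converse inclusion: a fractional point can satisfy both one-sided constraints with a common $z$ while violating a rank inequality of a connected flat of $M$ that straddles the $3$-sum. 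The paper's fix (Proposition~\ref{prop:asymmetric}) changes the semantics of the triangle variables (on the $M_i$ side they record which triangle elements lie in the \emph{span} of the chosen set, via a disjunction over the minors $M_i \contract T$, $M_i \delete T$ and mixed deletions/contractions---this is where the factor $16 = 2\cdot 8$ actually comes from, close in spirit to your ``$\le 16$ minors'' guess) and, crucially, uses \emph{two} copies $x^{T_i'}, x^{T_i''}$ of the triangle variables per triangle, because the validity proof needs two distinct certificates (e.g.\ $x^{T'} = \mathbf{e}_{\alpha}+\mathbf{e}_{\beta}$, $x^{T''} = \mathbf{e}_{\beta}+\mathbf{e}_{\gamma}$) for the same basis. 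Your single-$z$ setup has no analogue of this and there is no reason the resulting relaxation is exact.

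The second, equally serious gap is on the $M_0$ side. You use the hypothesis that $M_0$ is graphic or cographic only to invoke Wong--Martin for $\xc(P(M_0)) = O(|E(M_0)|^2)$ and then treat that formulation as a black box coupled to the $z^{(i)}$'s. In the paper's scheme, what must be small is not $\xc(P(M_0))$ but $\xc\bigl(P_{T_1,\dots,T_k}(M_0), Q_{T_1,\dots,T_k}(M_0)\bigr)$ for a pair of nested polytopes in which the constraint is that $(x^0,x^{T_1^*},\dots,x^{T_k^*}) \in P(M_0)$ for \emph{all} $2^k$ choices $T_i^* \in \{T_i',T_i''\}$ (Proposition~\ref{prop:asymmetricstar}). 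A generic extension of $P(M_0)$ cannot enforce these exponentially many coupled memberships at quadratic size; the paper achieves it only by opening up Wong's flow formulation and adding per-triangle circulation (rerouting) variables in the graphic case (Section~\ref{sec:graphic}), and a modified arborescence-dominant formulation in the cographic case (Section~\ref{sec:cographic}). This is the technical heart of Lemma~\ref{lem:star_bound}, it is where the graphic/cographic assumption is genuinely used, and your proposal contains no substitute for it.
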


Since the numbers of elements of $M_1$, \ldots, $M_k$ are smaller than the number of elements of $M$ by a constant factor, Lemmas~\ref{lem:star_decomposition} and \ref{lem:star_bound} are enough to prove a polynomial bound on $\xc(P(M))$. Details will be given below in Section~\ref{sec:main_thm}.

Later in this paper, we also consider the \emph{circuit dominant} of a matroid $M$, defined as 
$$
\cdom(M) := \conv\{\chi^C \in \{0,1\}^{E(M)} \mid C \text{ is a circuit of } M\}+\R^{E(M)}_+\,.
$$
We can give a $O(n^2)$-size extended formulation for this polyhedron whenever $M$ is a regular matroid on $n$ elements. Interestingly, the extended formulation can be constructed \emph{globally}, in the sense that it does not need Seymour's decomposition theorem. This is in stark contrast with the case of the independence polytope, for which we do not know how to avoid the decomposition theorem.

Here is an outline of the paper. In Section~\ref{sec:decomp}, we give some background on $t$-sums for $t \in [3]$ and prove Lemma~\ref{lem:star_decomposition}. Then, in Section~\ref{sec:main_thm}, we prove Theorem~\ref{thm:main} assuming Lemma~\ref{lem:star_bound}. The proof of Lemma~\ref{lem:star_bound} occupies the rest of the paper. In Section~\ref{sec:asymmetric} we give a first \emph{asymmetric} extended formulation of $P(M)$ for regular matroids $M$ that are the $3$-sum of two regular matroids $M_1$ and $M_2$, in order to illustrate the main ideas. Unfortunately, this extended formulation is not small enough for our purposes, and we have to use more specifics of the star decomposition $M \stackrel{\star}{=} M_0 \oplus_3 M_1 \oplus_3 \cdots \oplus_3 M_k$, in particular that $M_0$ is graphic or cographic. The graphic case is done in Section~\ref{sec:graphic}, and the cographic case in Section~\ref{sec:cographic}. In Section \ref{sec:cdom}, we provide a small extended formulation for the circuit dominant of any regular matroid. Finally, we discuss some improvements and open problems in Section~\ref{sec:discussion}. Some technical details necessary for the proof of Lemma~\ref{lem:star_bound} can be found in the appendix.

\section{Decompositions} \label{sec:decomp}

The main goal of this section is to prove Lemma~\ref{lem:star_decomposition}. We start by giving a few preliminaries on $t$-sums for $t \in [3]$.

\subsection{$1$-sums, $2$-sums and $3$-sums} In order to define $t$-sums for $t \in \{1,2,3\}$, we restrict to binary matroids. Recall that regular matroids are in particular binary, since they can be represented over every field. Recall also that
a \emph{cycle} of a matroid is the (possibly empty) disjoint union of circuits. Clearly, every matroid is determined by its cycles. (If $M$ is a binary matroid represented by matrix $A \in \mathbb{F}_2^{m \times n}$, then the cycles of $M$ are all solutions $x \in \mathbb{F}_2^n$ of $Ax = \mathbf{0}$.)

Let $M_1$, $M_2$ be binary matroids. Following \cite{seymour1980decomposition}, we define a new binary matroid $M := M_1\Delta M_2$ with $E(M) := E(M_1) \Delta E(M_2)$ such that the cycles of $M_1 \Delta M_2$ are all the subsets of $E(M)$ of the form $C_1 \Delta C_2$, where $C_i$ is a cycle of $M_i$ for $i \in [2]$. We are interested in the following three cases:
\begin{itemize}
    \item $E_1$ and $E_2$ are disjoint, and $E_1, E_2\neq \emptyset$: then we write $M=M_1\oplus_1 M_2$, and say that $M$ is the \emph{1-sum} of $M_1, M_2$;
    \item $E_1$ and $E_2$ share one element $\alpha$, which is not a loop or coloop of $M_1$ or $M_2$, and $|E_1|,|E_2|\geq 3$: then we write $M=M_1\oplus_2 M_2$, and say that $M$ is the \emph{2-sum} of $M_1, M_2$;
    \item $E_1$ and $E_2$ share a $3$-element subset $T=\{\alpha,\beta,\gamma\}$, where $T$ is a circuit of $M_1$ and $M_2$ (called a \emph{triangle}) that does not contain any cocircuit of $M_1$ or  $M_2$, and $|E_1|,|E_2|\geq 7$: then we write $M=M_1\oplus_3 M_2$, and we say that $M$ is the \emph{3-sum} of $M_1, M_2$.
\end{itemize}

In the following, whenever talking about $t$-sums, we implicitly assume that $M_1, M_2$, also called the \emph{parts} of the sum, satisfy the assumptions in the definition of the corresponding operation. A matroid is said to be \emph{connected} (or \emph{2-connected}) if it is not a 1-sum, and \emph{3-connected} if it is not a 2-sum or a 1-sum. A subset $F$ of a matroid $M$ is said to be connected if the restriction $M \restrict F$ is.

\subsection{Star decompositions}

We begin by stating a corollary of~\cite{seymour1980decomposition} that refines the decomposition theorem in the $3$-connected case, and is well-suited to our needs. Its proof can be found in the appendix.
\begin{theorem}\label{thm:seymourdecomposition3conn}
Let $M$ be a $3$-connected regular matroid that is not $R_{10}$. There exists a tree $\mathcal{T}$ such that each node $v \in V(\mathcal{T})$ is labeled with a graphic or cographic matroid $M_v$, each edge $vw \in E(\mathcal{T})$ has a corresponding $3$-sum $M_v \oplus_3 M_w$, and $M$ is the matroid obtained by performing all the $3$-sums operations corresponding to the edges of $\mathcal{T}$ (in arbitrary order). 
\end{theorem}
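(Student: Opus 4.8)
The plan is to derive this refinement from Seymour's decomposition theorem (Theorem~\ref{thm:seymourdecomposition}) by a careful bookkeeping argument on the decomposition tree, exploiting $3$-connectivity to rule out $1$- and $2$-sums at the top level and then pushing the argument recursively. First I would recall the standard fact that Seymour's theorem can be stated in the form of a \emph{decomposition tree}: there is a tree whose leaves are labeled by graphic matroids, cographic matroids, or copies of $R_{10}$, whose internal edges are labeled by $1$-, $2$-, or $3$-sums, and such that reassembling along the tree (in any order) recovers $M$. The goal is to show that when $M$ is $3$-connected and not $R_{10}$, one can choose such a tree in which \emph{all} sums are $3$-sums and \emph{no} leaf is $R_{10}$.

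The key steps, in order, are as follows. (1) Invoke Seymour's theorem to get some decomposition tree $\mathcal{T}_0$ for $M$. (2) Argue that $1$-sums can be eliminated: a $1$-sum produces a disconnected matroid, and connectivity is not destroyed by performing the other sums on either side in the appropriate sense, so a top-level or ``exposed'' $1$-sum would force $M$ to be disconnected, contradicting $3$-connectivity; more carefully, one uses the fact (from Seymour or Oxley's book) that if $M$ is connected then the decomposition can be taken with no $1$-sums. (3) Eliminate $2$-sums similarly: a $2$-sum exhibits a $2$-separation of $M$ (the separation ``survives'' the remaining sums because $t$-sums with $t \ge 2$ glue along small sets and cannot merge the two sides into a $3$-connected whole), contradicting $3$-connectedness; so the decomposition can be taken with only $3$-sums. (4) Eliminate $R_{10}$ leaves: here I would use the known structural fact that $R_{10}$ is \emph{splitter} for the class of regular matroids --- it has no $3$-sum decomposition and appears in a decomposition tree only as an isolated component (a $1$-sum), which step (2)/(3) has already excluded; hence if $M \ne R_{10}$ is $3$-connected, no leaf of the (now $3$-sum-only) tree is $R_{10}$. (5) Finally, repackage: every leaf is now graphic or cographic, every edge is a $3$-sum, and the tree structure with node labels $M_v$ is exactly the asserted $\mathcal{T}$; note $k \le n/4$ type bounds are not needed here (that is for Lemma~\ref{lem:star_decomposition}), only the qualitative structure.

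The main obstacle I expect is step (3), the clean elimination of $2$-sums (and the analogous care in step (2)): one must verify that a $2$-separation appearing \emph{anywhere} in the decomposition tree, not just at the root, induces a genuine $2$-separation of the assembled matroid $M$, i.e.\ that performing $3$-sums on the two sides of a $2$-sum does not accidentally raise the connectivity across that cut. This requires understanding how circuits of $M$ decompose across a $t$-sum and checking that the small separating set is preserved; the cleanest route is probably to cite the relevant connectivity lemmas from \cite{seymour1980decomposition} (or Oxley's \emph{Matroid Theory}) rather than reprove them, and to induct on the size of the tree: remove a leaf edge, apply the inductive hypothesis to the smaller matroid, and argue the removed piece can be reattached as a $3$-sum. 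Steps (1), (4), (5) are then essentially bookkeeping, with step (4) relying only on the well-known splitter property of $R_{10}$ within the regular matroids.
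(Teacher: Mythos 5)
Your plan has a genuine gap at its core, namely steps (2)--(3). You assume that a $1$- or $2$-sum appearing \emph{anywhere} in a Seymour decomposition tree of $M$ induces a $1$- or $2$-separation of the assembled matroid $M$, so that $3$-connectivity of $M$ lets you discard all such sums. This implication fails exactly in the case that actually occurs: when the $2$-separation of an intermediate part crosses the triangle along which that part is attached. Concretely, if $M$ is $3$-connected and $M = M_1 \oplus_3 M_2$, the parts $M_1, M_2$ need \emph{not} be $3$-connected --- by Seymour's (4.3) (Lemma~\ref{prop:not3conn} in this paper) they can have $2$-separations whose small side is a parallel pair $\{\alpha,\alpha'\}$ with $\alpha$ on the attaching triangle $T$. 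Such a $2$-separation crosses $T$, so it does \emph{not} project to a $2$-separation of $M$, and your contradiction argument does not apply; a recursive application of Seymour's theorem to the parts is therefore not automatically free of $2$-sums. Handling precisely this phenomenon is the non-trivial content of the theorem: the paper works with \emph{almost $3$-connected} matroids (simplification $3$-connected), shows this property is preserved under taking parts of $3$-sums (Lemma~\ref{lem:almost3con3sum}), and shows that an almost $3$-connected regular matroid with no $R_{10}$ minor that is not graphic or cographic is again a $3$-sum of almost $3$-connected minors (Lemma~\ref{lem:almost3con}), using the $R_{12}$ minor guaranteed by Lemma~\ref{prop:R10,R12}. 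Parallel elements are harmless because adding them keeps a matroid graphic or cographic, which is why ``almost'' $3$-connected suffices.

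A second, related omission: when a node of the tree already participates in $3$-sums with its neighbours, the new $3$-separation used to split that node must not cross any of the triangles already in use, otherwise the existing edges of the tree no longer correspond to well-defined $3$-sums. The paper needs a dedicated uncrossing statement for this (Lemma~\ref{prop:3separationtriangles}, extracted from Truemper), and its proof of the theorem is an iterative construction of the tree that maintains almost $3$-connectivity and triangle-disjointness at every step --- not a pruning of an arbitrary decomposition as in your proposal. Your step (4) invokes the right fact in spirit (the splitter property of $R_{10}$), but the correct use is the one in Lemma~\ref{prop:R10,R12}(1): since $M$ is $3$-connected and not $R_{10}$, it has no $R_{10}$ minor, and all pieces are minors of $M$, so no piece is $R_{10}$; the claim that $R_{10}$ ``appears only as an isolated $1$-sum component'' is not the statement you need. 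Until the crossing issue and the failure of $3$-connectivity of the parts are addressed, the proposal does not yield the theorem.
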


We will also need the following easy result.

\begin{lemma} \label{lem:weightedtree}
Consider a tree $\mathcal{T}$ with node weights $w : V(\mathcal{T}) \to \R$, and denote by $W$ the total weight of $\mathcal{T}$. Then there is a node $v_0 \in V(\mathcal{T})$ such that each component of $\mathcal{T} - v_0$ has total weight at most $W/2$.
\end{lemma}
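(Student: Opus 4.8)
The plan is to use a weighted version of the classical tree-centroid argument: orient every edge of $\mathcal{T}$ toward its heavier side and take $v_0$ to be a vertex with no outgoing arc.

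First I would record the elementary observation that for each edge $e \in E(\mathcal{T})$, deleting $e$ splits $\mathcal{T}$ into two subtrees whose weights sum to $W$; hence at most one of the two subtrees can have weight strictly greater than $W/2$, since otherwise the two weights would sum to strictly more than $W$. Orient $e$ toward the heavier subtree when one exists (weight $> W/2$), and leave $e$ unoriented otherwise. This defines a partial orientation of the edges of $\mathcal{T}$.

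Next, the oriented edges form a subforest of $\mathcal{T}$, so they contain no directed cycle; equivalently, at most $|V(\mathcal{T})| - 1 < |V(\mathcal{T})|$ edges are oriented, so some vertex $v_0$ has out-degree $0$ (if $\mathcal{T}$ is a single vertex there is nothing to prove). I claim $v_0$ is the desired node. Each component of $\mathcal{T} - v_0$ is the subtree $\mathcal{T}_u$ lying on the far side of some edge $e = v_0 u$ incident to $v_0$. If $e$ is unoriented, then by construction $w(\mathcal{T}_u) \le W/2$. If $e$ is oriented, then since $v_0$ has out-degree $0$ the arc must point toward $v_0$, i.e., toward the component of $\mathcal{T} - e$ containing $v_0$; that component then has weight $> W/2$, so its complement $\mathcal{T}_u$ has weight $< W/2$. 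In either case $w(\mathcal{T}_u) \le W/2$, which proves the lemma.

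The point that needs care is that the weights may be negative, which is why I would not use the more familiar proof that picks $v_0$ minimizing the maximum weight of a component of $\mathcal{T} - v_0$: with negative weights, moving from a vertex toward one of its heavy components need not decrease that quantity, so the usual exchange argument breaks. The orientation argument sidesteps this because it only ever compares the weight of one side of an edge with $W/2$ and uses the identity $W = w(\text{one side}) + w(\text{other side})$, which is valid regardless of the signs of the weights.
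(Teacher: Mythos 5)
Your proof is correct and is essentially the paper's own argument: orient each edge toward its heavier side and take a vertex with no outgoing arc, then compare each component of $\mathcal{T}-v_0$ with its complement via $w(\text{one side})+w(\text{other side})=W$. The only cosmetic difference is that the paper orients every edge (breaking ties arbitrarily) and invokes the existence of a sink in an orientation of a tree, whereas you leave balanced edges unoriented and count tails; both handle possibly negative weights equally well.
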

\begin{proof} 
Orient each edge $e \in E(\mathcal{T})$ towards the heaviest component of $\mathcal{T} - e$, breaking ties arbitrarily. Now, let $v_0$ be a sink node of this orientation, which exists since $\mathcal{T}$ is a tree. Let $\mathcal{T}_1$, \ldots, $\mathcal{T}_k$ denote the components of $\mathcal{T} - v_0$. Since $v_0$ is a sink, we have $w(\mathcal{T}_i) \leq W - w(\mathcal{T}_i)$ and hence $w(\mathcal{T}_i) \leq W/2$, for all $i \in [k]$.
\end{proof}

\begin{proof}[Proof of Lemma~\ref{lem:star_decomposition}]
Let $\mathcal{T}$ be a decomposition tree for $M$, as described in Theorem~\ref{thm:seymourdecomposition3conn}. Thus each node $v \in V(\mathcal{T})$ is labeled with a graphic or cographic matroid $M_v$. We assign to each node $v$ the weight $w(v) := |E(M) \cap E(M_v)|$, so that the total weight $W$ is $n$. 

Pick a node $v_0$ as in Lemma~\ref{lem:weightedtree}. Let $M_0 := M_{v_0}$ be the (graphic or cographic) matroid corresponding to $v_0$. We have that $M_0$ is a minor of $M$ (see Section \ref{sec:appendixdecomp} of the appendix for definitions and further details) and thus $|E(M_0)| \leq |E(M)|$. Letting $\mathcal{T}_1$, \ldots, $\mathcal{T}_k$ denote the components of $\mathcal{T} - v_0$, define $M_i$ to be the matroid obtained by performing all the $3$-sums corresponding to the edges of $\mathcal{T}_i$. By choice of $v_0$, for $i \in [k]$, we have $|E(M_i)| \leq n/2 + 3$ (the three extra elements are those that get deleted in the $3$-sum $M_0 \oplus_3 M_i$). Finally, we need to argue that $k\leq n/4$: this is implied by the fact that each $M_i$ is part of a 3-sum, hence it has at least 7 elements, at least 4 of which are shared with $M$. Therefore, we have that $M \stackrel{\star}{=} M_0 \oplus_3 M_1 \oplus_3 \cdots \oplus_3 M_k$.
\end{proof}

\section{Proof of main theorem}
\label{sec:main_thm}

In this section, we prove  Theorem~\ref{thm:main} assuming that Lemma~\ref{lem:star_bound} holds. The following technical lemma will be useful.

\begin{lemma} \label{lem:convex}
Let $f : [a,b] \to \R$ be a convex function. For every $\varepsilon \in [0,b-a]$, there holds
$$
f(a+\varepsilon) + f(b-\varepsilon)
\leq f(a) + f(b)\,.
$$
\end{lemma}

We have all ingredients to prove our main theorem.

\begin{proof}[Proof of Theorem~\ref{thm:main}]
Let $M$ be a regular matroid on $n$ elements. We go by induction on $n$. 
If $M$ is either graphic, cographic or $R_{10}$, then $\xc(P(M)) \leq c_0 \cdot n^{6}$, for $c_0\geq 2$. If $M$ is graphic or cographic, this follows from~\cite{wong1980integer,Martin91}. If $M$ is isomorphic to $R_{10}$, we can use the trivial bound $\xc(P(M)) \leq 2^n$. 

Next, assume that $M$ is a $1$- or $2$-sum of regular matroids $M_1$ and $M_2$. If $M$ is a 1-sum, then the bound on $\xc(M)$ follows directly from Lemma~\ref{lem:simplebound}) applying induction. Otherwise, we have $M=M_1\oplus_2 M_2$. For $i \in [2]$, let $n_i := |E(M_i)| 
\geq 3$. We get 
\begin{align*}
\xc(P(M)) &= \xc(P(M_1 \oplus_2 M_2))\\
&\leq \xc(P(M_1)) + \xc(P(M_2)) \quad \text{(by Lemma~\ref{lem:simplebound})}\\
&\leq c_0 \cdot n_1^{6} + c_0 \cdot n_2^{6}
\quad \text{(by induction)}\\
&\leq c_0 \cdot (\underbrace{n_1+n_2-3}_{=n-1})^{6} + 
c_0 \cdot 3^{6}
\quad \text{(by Lemma~\ref{lem:convex})}\\
&\leq c_0 \cdot n^{6} \quad \text{(since $n \geq 4$)}\,.
\end{align*}

In the remaining case, $M$ is neither graphic, nor cographic and not a $1$- or $2$-sum. By Lemma~\ref{lem:star_decomposition}, $M$ has a star decomposition $M \stackrel{\star}{=} M_0 \oplus_3 M_1 \oplus_3 \cdots \oplus_3 M_k$. For $i \in \{0\} \cup [k]$, let $n_i := |E(M_i)|$. Notice that $3k \leq n_0 \leq n$, $7 \leq n_i \leq n/2 + 3$ for $i \in [k]$
 and $\sum_{i=0}^k n_i = n + 6k$, thus $\sum_{i=1}^k n_i \leq n + 3k$. This time, we bound $\xc(P(M))$ as follows:
\begin{align*}
\xc(P(M)) &= \xc(P(M_0 \oplus_3 M_1 \oplus_3 \cdots \oplus_3 M_k))\\
&\leq c_1 \cdot |E(M_0)|^2 + 16 \sum_{i=1}^k \xc(P(M_i)) \quad \text{(by Lemma~\ref{lem:star_bound})}\\
&\leq c_0 \cdot n_0^2 + 16c_0 \cdot \sum_{i=1}^k n_i^{6}
\quad \text{(by induction, provided that $c_0 \geq c_1$)}\\
&\leq c_0 \cdot \underbrace{n_0^2}_{\leq n^2} + 16c_0 \cdot \big( 3 \cdot (n/2+3)^{6}+\underbrace{k}_{\leq n/4} \cdot 7^{6} \big) \quad \text{(by Lemma~\ref{lem:convex})\footnotemark}\\
&\leq c_0 \cdot n^{6} \quad \text{(if $n$ is large enough, in particular $n \geq 123$)}\,.
\end{align*}
\footnotetext{By applying Lemma~\ref{lem:convex} repeatedly, and by reordering, we can assume that $n_1,\dots, n_h$ for some $h$ are equal to $n/2+3$ and $n_{h+2,\dots},\dots, n_k$ are equal to 7, with $n_{h+1}$ possibly in between. Since $k\leq n/4$, a simple calculation implies $h+1\leq 3$, and the bound follows. 
}
If $n$ is too small for the last inequality to hold, we use the direct bound $\xc(P(M)) \leq 2^n \leq c_0 \cdot n^{6}$, which holds provided that $c_0$ is large enough. 
\end{proof}

\section{Asymmetric formulations for $3$-sums} \label{sec:asymmetric}

In this section we take one big conceptual step towards a proof of Lemma~\ref{lem:star_bound}. Using the characterization of bases in a $3$-sum, it is easy to obtain an extended formulation for $P(M_1 \oplus_3 M_2)$ whose size is bounded by $c_2 \cdot \xc(P(M_1)) + c_2 \cdot \xc(P(M_2))$ for some constant $c_2 \geq 1$. We call this type of formulation \emph{symmetric}\footnote{We point out that the symmetry in extended formulation was studied before, with a different meaning, see e.g.~\cite{yannakakis1991expressing,KaibelPT12}. In contrast, the adjective ``symmetric'' is used here in an illustrative way and does not have a mathematically precise meaning.}, since $M_1$ and $M_2$ play similar roles. Unless $c_2 = 1$, symmetric formulations do not lead to a polynomial size extended formulation for $P(M)$ for all regular matroids $M$. Since the best constant we know of is $c_2 = 4$, we do not see how to prove Theorem~\ref{thm:main} in this way. 

Instead, we propose an \emph{asymmetric} formulations for $P(M_1 \oplus_3 M_2)$, that is, an extended formulation of size at most $c_3 \cdot \xc(P(M_1)) + c_4 \cdot \xc(P(M_2))$ where $1 \leq c_3 \leq c_4$ and $c_3$ is as small as possible, at the cost of making $c_4$ large. This is our first insight.

Our intuition for asymmetric formulations mainly comes from optimization. Let $M_1$ and $M_2$ be binary matroids sharing a triangle $T := E(M_1) \cap E(M_2)$. In order to find a maximum weight independent set in $M_1 \oplus_3 M_2$ we first solve \emph{several} subproblems in $M_2$, then use this to define weights for the elements of the triangle $T := E(M_1) \cap E(M_2)$ and then solve a \emph{single} optimization problem over $M_1$, where the elements of $E_1\setminus T$ keep their original weights. Eventually, this leads to an asymmetric formulation with $c_3 = 2$ and $c_4 = 16$. (Roughly speaking, the reason why this gives $c_3 = 2$ and not $c_3 = 1$ is that in order to convert the optimization algorithm into an extended formulation, we need to distinguish between two types of objective functions. Actually, our point of view below will be slightly different.)

Next, we quickly explain how $c_3$ can be lowered to $1$ when the term $\xc(P(M_1))$ is replaced by the extension complexity of a certain \emph{pair} of polytopes depending on $M_1$ and $T$. This is our second insight, and will serve as a conceptual basis for our proof of Lemma~\ref{lem:star_bound}. 

Finally, we discuss how things change when, instead of being defined by a single $3$-sum, $M$ is defined by a star decomposition. Hence, instead of having a single triangle $T$, we will have $k \geq 1$ disjoint triangles $T_1$, \ldots, $T_k$.

\subsection{Preliminaries}

We state some facts on $3$-sums that will be useful below. 
If $M$ is a matroid and $e \in E(M)$, we denote by $M \delete e$ the matroid obtained from $M$ by deleting $e$ and by $M \contract e$ the matroid obtained from $M$ by contracting $e$. These notations carry on to subsets $F \subseteq E(M)$. Also, recall that $M \restrict F$ denotes the restriction of $M$ to $F$.
For the rest of the section, we consider a binary matroid $M$ such that $M = M_1\oplus_3 M_2$, where $M_1$ and $M_2$ are binary matroids. Let $T := E(M_1) \cap E(M_2)$ be the triangle on which $M_1$ and $M_2$ are attached to form their $3$-sum. Our first lemma lists some useful well known facts. We refer to \cite{oxley2006matroid} and \cite{schrijver2003combinatorial} for proofs.

\begin{lemma}\label{lem:3sumfacts}
If $M=M_1\oplus_3 M_2$, then the following hold.
  \begin{enumerate}[label=(\roman*)]
      \item $\rk(M)=\rk(M_1)+\rk(M_2)-2$.
      \item The flats of $M$ are of the form $F_1\Delta F_2$, where $F_i$ is a flat of $M_i$ for $i \in [2]$, with $F_1\cap T=F_2\cap T$.
      \item The circuits of $M$ are of the form $C_1\Delta C_2$, where $C_i$ is a circuit of $M_i$ for $i \in [2]$, with $C_1\cap T=C_2\cap T$.
      \item Let $F \subseteq E(M)$ such that $F \subseteq E(M_1)$ (resp.\ $F \subseteq E(M_2)$). Then $M \restrict F = M_1 \restrict F$ (resp.\ $M \restrict F = M_2 \restrict F$). In particular, $I \subseteq F$ is an independent set of $M$ if and only if it is an independent set of $M_1$ (resp.\ $M_2$).
  \end{enumerate}
\end{lemma}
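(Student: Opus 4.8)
\textbf{Proof plan for Lemma~\ref{lem:3sumfacts}.}

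\medskip

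These are all standard structural facts about $3$-sums of binary matroids, so rather than reproving them from scratch I would organize the argument around the defining property: the cycles of $M = M_1 \oplus_3 M_2$ are exactly the sets $C_1 \Delta C_2$ where $C_i$ is a cycle of $M_i$, together with the observation that $T$ is a triangle in each $M_i$. The plan is to prove (iii) first (circuit structure), then derive (i) by a rank computation, then (ii) via orthogonality/duality, and finally (iv) directly from the cycle description since it is essentially a definitional observation.

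\medskip

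For (iii), I would start from the cycle description of $M_1 \Delta M_2$. A circuit of $M$ is a minimal nonempty cycle, so write it as $C_1 \Delta C_2$ with $C_i$ a cycle of $M_i$; the key point is that one can take $C_1, C_2$ to be circuits (not just cycles) and to satisfy $C_1 \cap T = C_2 \cap T$. Here the triangle $T$ plays the role of a ``synchronizing'' set: since $T$ is a circuit in both $M_1$ and $M_2$, adding $T$ (symmetric difference) to a cycle of $M_i$ keeps it a cycle of $M_i$, which lets one adjust the intersections with $T$ to agree without changing $C_1 \Delta C_2$. Minimality of the circuit in $M$ then forces $C_1$ and $C_2$ to be circuits, using that the symmetric difference of two cycles is a cycle and Lemma part (iv) (restriction to $E(M_i)$). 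The converse — that $C_1 \Delta C_2$ with $C_1 \cap T = C_2 \cap T$ and each $C_i$ a circuit is actually a circuit of $M$, not a larger cycle — needs the hypothesis that $T$ contains no cocircuit of $M_1$ or $M_2$; this is where I expect the main technical obstacle to lie, and it is also precisely the condition baked into the definition of $3$-sum. I would cite \cite{oxley2006matroid,schrijver2003combinatorial} for the careful version of this step rather than grinding it out.

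\medskip

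Given (iii), part (i) follows by a rank count: $\rk(M) = |E(M)| - (\text{number of independent ``cocircuit'' constraints})$, or more cleanly, $\rk M$ equals $|E(M)|$ minus the dimension of the cycle space of $M$ over $\mathbb{F}_2$. The cycle space of $M_1 \Delta M_2$ is $\{x_1 \Delta x_2 : x_i \in \mathcal{C}(M_i),\ x_1|_T = x_2|_T\}$, and a dimension count on this fibre product of the two cycle spaces over the common $3$-dimensional coordinate space on $T$ — using that $T$ is a circuit but contains no cocircuit in each part, so the cycle spaces of $M_1$ and $M_2$ both project onto a $2$-dimensional subspace of $\mathbb{F}_2^T$ — gives $\dim = \dim \mathcal{C}(M_1) + \dim \mathcal{C}(M_2) - 2$, hence $\rk M = \rk M_1 + \rk M_2 - 2$ after substituting $|E(M)| = |E(M_1)| + |E(M_2)| - 6$. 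For (ii), I would use that flats are complements of unions of cocircuits, dualize (the dual of a $3$-sum is a $3$-sum of the duals on the same triangle, which is now a triad), apply the circuit description (iii) to $M^* = M_1^* \oplus_3 M_2^*$, and translate back. Finally (iv) is immediate: if $F \subseteq E(M_1)$, then a subset $C \subseteq F$ is a cycle of $M$ iff $C = C_1 \Delta C_2$ with $C_2 \subseteq E(M_2) \cap F = \emptyset$ forcing $C_2 = \emptyset$ and $C = C_1$ a cycle of $M_1$ contained in $F$; so $M \restrict F$ and $M_1 \restrict F$ have the same cycles, hence the same circuits, hence the same independent sets, which gives the ``in particular'' clause. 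The main obstacle throughout is the careful handling of the ``no cocircuit inside $T$'' condition in the converse direction of (iii); everything else is bookkeeping with symmetric differences over $\mathbb{F}_2$.
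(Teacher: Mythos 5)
First, note that the paper does not actually prove Lemma~\ref{lem:3sumfacts}: it states these as well-known facts and refers to \cite{oxley2006matroid} and \cite{schrijver2003combinatorial} for proofs, so there is no in-paper argument to match. Judged on its own, your sketch has a concrete error in the count for (i). You claim that, because $T$ is a circuit containing no cocircuit of $M_i$, the cycle space of $M_i$ projects onto a $2$-dimensional subspace of $\mathbb{F}_2^T$. The roles of the two hypotheses are swapped: the annihilator of the projection of the cycle space of $M_i$ to $\mathbb{F}_2^T$ consists of the cocycles of $M_i$ contained in $T$, so ``$T$ contains no cocircuit'' forces this projection to be \emph{all} of $\mathbb{F}_2^T$ (check it on $M(K_4)$ with $T$ a triangle); it is the \emph{cocycle} space whose projection is the $2$-dimensional even-weight subspace, because $T$ is a circuit. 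Moreover, the map from your fibre product to the cycle space of $M$ is not injective: its kernel is spanned by the pair $(\chi^T,\chi^T)$, since $T$ is a cycle of both parts. With your numbers ($2$-dimensional projections and an injective map) the arithmetic gives $\rk(M)=\rk(M_1)+\rk(M_2)-4$, not $-2$; the correct cycle-space count is $\dim\mathcal C(M)=\dim\mathcal C(M_1)+\dim\mathcal C(M_2)-3-1$. The cleanest repair is to run exactly your fibre-product argument but with the \emph{cocycle} spaces (whose dimensions are the ranks): they project onto the common $2$-dimensional even-weight subspace of $\mathbb{F}_2^T$ because $T$ is a circuit, the gluing map is injective because $T$ contains no cocircuit, and one gets $\rk(M)=\rk(M_1)+\rk(M_2)-2$ directly.

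Two smaller points. In (iv), the inference ``$C_2\subseteq E(M_2)\cap F=\emptyset$'' is not valid, since only $C_1\Delta C_2$ is required to lie in $F$, not $C_2$ itself; what you actually get is $C_2\setminus T=\emptyset$, i.e.\ $C_2\in\{\emptyset,T\}$, and in the case $C_2=T$ you must absorb $T$ into $C_1$ (replace $C_1$ by $C_1\Delta T$, which is again a cycle of $M_1$). The conclusion stands, but the step as written is wrong. In (ii), the dual of $M_1\oplus_3 M_2$ is the $\Delta$-sum $M_1^*\,\Delta\,M_2^*$ along $T$, where $T$ is now a \emph{triad} of each $M_i^*$ containing no circuit; this is not a $3$-sum in the sense defined in the paper, so you cannot invoke (iii) verbatim for $M^*$ — you need the analogous (and differently-conditioned) description of the cocycles of the $\Delta$-sum, or a direct argument with flats. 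The overall architecture (prove the cycle-space description, deduce the rank, dualize for flats, read off (iv)) is reasonable and consistent with the standard treatments the paper cites, but as written the plan for (i) would derive a false formula and (ii) leans on an operation identity that needs to be stated and justified correctly.
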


Our next lemma gives a characterization of the bases of a $3$-sum. Its proof can be found in the appendix. 

\begin{lemma}\label{lem:3sumbases}
  Let $M=M_1\oplus_3 M_2$ and let $T := E(M_1) \cap E(M_2)$. A subset $B \subseteq E(M)$ is a basis of $M$ if and only if one of the following holds for some $i \in [2]$ 
  \begin{enumerate}[label=(\roman*)]
    \item $B = B_i \cup (B_{3-i}-t_1-t_2)$, where $B_i$ is a basis of $M_i$ disjoint from $T$ and $B_{3-i}$ is a basis of $M_{3-i}$ containing two elements $t_1, t_2 \in T$.
    \item $B = (B_i - t_1) \cup (B_{3-i} - t_2)$, where $B_i$ is a basis of $M_i$ intersecting $T$ in a single element $t_1$, $B_{3-i}$ is a basis of $M_{3-i}$ intersecting $T$ in a single element $t_2$ distinct from $t_1$, and moreover $B_i - t_1 + t_3$ is a basis of $M_i$ and $B_{3-i} - t_2 + t_3$ is a basis of $M_{3-i}$ where $t_3$ denotes the third element of $T$.
  \end{enumerate}
\end{lemma}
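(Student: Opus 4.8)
The plan is to prove Lemma~\ref{lem:3sumbases} by exploiting the cycle characterization of the $3$-sum together with the rank formula $\rk(M) = \rk(M_1) + \rk(M_2) - 2$ from Lemma~\ref{lem:3sumfacts}(i). Since $|B|$ must equal $\rk(M)$ for $B$ to be a basis, and independence of $B$ in $M$ is equivalent to $B$ containing no nonempty cycle of $M$, the idea is to analyze a candidate set $B \subseteq E(M)$ according to how its ``shadow'' meets $T$. Concretely, I would first establish the \textbf{easy direction}: given $B$ of one of the two prescribed forms, check directly that $|B| = \rk(M)$ using (i) and the stated cardinalities of $B_1, B_2$, and then verify independence by showing any cycle of $M$ contained in $B$ would yield a cycle of $M_i$ contained in $B_i$ (using Lemma~\ref{lem:3sumfacts}(iii) or the definition via $C_1 \Delta C_2$), contradicting that $B_i$ is independent. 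The side conditions in case (ii) — that $B_i - t_1 + t_3$ and $B_{3-i} - t_2 + t_3$ are bases — are exactly what is needed to rule out a circuit $C_1 \Delta C_2$ with $C_1 \cap T = C_2 \cap T = \{t_3\}$; a circuit through $t_3$ on the $M_i$ side closes up with one through $t_3$ on the $M_{3-i}$ side to form a cycle of $M$ inside $B$, so forbidding such circuits is equivalent to the basis-exchange conditions stated.

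For the \textbf{harder direction}, suppose $B$ is a basis of $M$. Write $B_i^\circ := B \cap E_i$ for $i \in [2]$ (note $B \cap T = \emptyset$ is \emph{not} automatic — elements of $T$ may or may not lie in $B$); actually since $T$ is a circuit of each $M_i$ and a triangle, I should track $B \cap T$ carefully. The key structural fact to prove is that $B \cap T$ has at most one element: if $B$ contained two or three elements of $T$, then together with the fact that $T$ is a triangle one gets a small cycle inside $B$ — but wait, that is only a $2$-element or $3$-element subset of the triangle, which need not be a cycle. So instead the argument must go: since $B$ is independent in $M$ and $T$ is a circuit of $M_1$, the set $B \cap E_1$ is independent in $M_1$ (by Lemma~\ref{lem:3sumfacts}(iv)), so $|B \cap T| \leq 2$; symmetrically for $M_2$. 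Then one extends $B \cap E_i$ to a basis $B_i$ of $M_i$, choosing the extension inside $T$ in a controlled way, and argues via the rank count $|B| = \rk(M_1) + \rk(M_2) - 2$ that the extensions can be coordinated so that $B = B_1 \Delta B_2$ in the appropriate pattern — distinguishing the case $B \cap T = \emptyset$ (which forces one part to pick up two triangle elements, giving case (i)) from $|B \cap T| = 1$ (giving case (ii), where the exchange conditions come out of $B$ having no cycle).

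The main obstacle I expect is the bookkeeping around $T$ in the forward direction: precisely pinning down, given only that $B$ is a basis of $M$, which of the four patterns ($\{t_i\}$ used on one side, two $T$-elements on one side, etc.) occurs, and showing the cardinalities force exactly the stated decomposition rather than some hybrid. One clean way to manage this is to use the dual/deletion-contraction descriptions of $M_i$ relative to $T$ — i.e., relate independent sets of $M$ avoiding $T$ to independent sets of $M_1 \delete T$ and $M_2 \delete T$, and independent sets meeting $T$ in one element $t_j$ to independent sets of $M_i \contract t_j \delete (T - t_j)$ — so that the rank additivity becomes transparent. I would also invoke the appendix's more detailed discussion of how $M_1$, $M_2$ sit as minors of $M$ (referenced after Lemma~\ref{lem:weightedtree}) if a cleaner minor-theoretic argument is available. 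The independence (no-cycle) verifications in both directions are routine given Lemma~\ref{lem:3sumfacts}(iii), so the real work is the cardinality/rank accounting that selects the case split.
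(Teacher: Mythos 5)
The central flaw is in your forward direction: you treat $B \cap T$ as a meaningful case parameter, writing that ``$B \cap T = \emptyset$ is not automatic''. It is automatic. In the $3$-sum $M = M_1 \oplus_3 M_2$ the ground set is $E(M) = E(M_1)\, \Delta\, E(M_2)$, so the triangle $T = E(M_1) \cap E(M_2)$ is removed and no subset of $E(M)$, in particular no basis, contains an element of $T$. Consequently your proposed dichotomy --- $B \cap T = \emptyset$ ``forces one part to pick up two triangle elements, giving case (i)'', while $|B \cap T| = 1$ gives case (ii) --- collapses: the second alternative is vacuous and the first comprises every basis, so your scheme cannot separate the two cases of the lemma, both of which genuinely occur. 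The correct dichotomy (and the one the paper uses) comes from cardinality counting: by Lemma~\ref{lem:3sumfacts}, $|B| = \rk(M_1)+\rk(M_2)-2$ and each $B \cap E(M_i)$ is independent in $M_i$, so up to swapping either $|B \cap E(M_1)| = \rk(M_1)$ and $|B \cap E(M_2)| = \rk(M_2)-2$ (then $B \cap E(M_1)$ is a basis of $M_1$ and $B \cap E(M_2)$ extends to a basis of $M_2$ by adding two elements of $T$, which is case (i)), or $|B \cap E(M_i)| = \rk(M_i)-1$ for both $i$, which is case (ii). Moreover, in the latter case the real work --- which your sketch leaves at ``the exchange conditions come out of $B$ having no cycle'' --- is to show that the set of $t \in T$ with $(B\cap E(M_1))+t$ a basis of $M_1$ and the set of $t \in T$ with $(B\cap E(M_2))+t$ a basis of $M_2$ are distinct two-element subsets of $T$: if some $t' \in T$ failed on both sides, take a circuit of $M_1$ inside $(B\cap E(M_1))+t'$ and a circuit of $M_2$ inside $(B\cap E(M_2))+t'$, each meeting $T$ exactly in $t'$; their symmetric difference is a nonempty cycle of $M$ contained in $B$, contradicting independence. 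This gluing argument is precisely what produces the $t_3$-conditions in (ii), and it is absent from your plan.

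Your backward direction is in the right spirit and close to the paper's: check $|B| = \rk(M)$ and rule out a circuit $C = C_1 \Delta C_2 \subseteq B$ by exhibiting a nonempty cycle of $M_1$ or $M_2$ inside the independent set $B_1$, $B_2$, $B_1 - t_1 + t_3$ or $B_2 - t_2 + t_3$. But it is not quite ``routine'': since $C_1 \cap T = C_2 \cap T$ can be any subset of $T$, you must consider both $C_i$ and $C_i \Delta T$ (using that $T$ is a circuit of each $M_i$) and run through all possibilities for $C_1 \cap T$, not only $\{t_3\}$; note in particular that the side conditions of (ii) are also what handles the case $C_1 \cap T = \{t_1, t_2\}$, via $C_1 \Delta T \subseteq B_1 - t_1 + t_3$.
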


We conclude these preliminaries with properties of  connected flats in a $3$-sum for later use. Our interest for these flats is motivated by the well known fact that for any (loopless) matroid $M$, 
$$
P(M)=\{x \in \R^{E(M)}_+ \mid \forall \text{ connected flat } F\subseteq E(M) : x(F) \leq \rk(F)\}\,.
$$
See, e.g., \cite{schrijver2003combinatorial}. We refer the reader to the appendix for the proof of Lemma~\ref{lem:3sumflats}.

\begin{lemma}\label{lem:3sumflats}
Let $M=M_1\oplus_3 M_2$ and let $T := E(M_1) \cap E(M_2)$. If $F$ is a connected flat of $M$, then $F$ satisfies one of the following.
 
\begin{enumerate}[label=(\roman*)]
\item $F \subseteq E(M_i)$ for some $i \in [2]$ and $F$ is a connected flat of $M_i$.
\item There are connected flats $F_1$, $F_2$ of $M_1$, $M_2$ respectively such that $F = F_1 \Delta F_2$, $F_1 \cap T = F_2\cap T$ is a singleton, and $\rk(F) = \rk (F_1)+\rk(F_2)-1$.
\item There are connected flats $F_1$, $F_2$ of $M_1$, $M_2$ respectively such that $F = F_1 \Delta F_2$, $F_1 \cap T = F_2\cap T$ is the whole triangle $T$, and $\rk(F) = \rk(F_1)+\rk(F_2)-2$.
\end{enumerate}
\end{lemma}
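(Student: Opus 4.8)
The plan is to analyze a connected flat $F$ of $M = M_1 \oplus_3 M_2$ by splitting it according to $E(M_1)$ and $E(M_2)$. Concretely, I would start from Lemma~\ref{lem:3sumfacts}(ii), which says $F = F_1 \Delta F_2$ for flats $F_i$ of $M_i$ with $F_1 \cap T = F_2 \cap T =: S$. Since $T$ is a triangle (a circuit of size $3$) and flats are closed, $S$ cannot be a $2$-element subset of $T$ (the closure of any two elements of $T$ contains the third), so $S \in \{\emptyset, \{t\}, T\}$ for some $t \in T$. This trichotomy is exactly what produces the three cases of the statement, modulo the case $S = \emptyset$, which I will have to fold into case (i): if $S = \emptyset$ then $F_1$ and $F_2$ are disjoint flats, $F = F_1 \cup F_2$, and I would argue that connectedness of $F = M \restrict F$ forces one of $F_1$, $F_2$ to be empty, so $F \subseteq E(M_i)$; then by Lemma~\ref{lem:3sumfacts}(iv) $M \restrict F = M_i \restrict F$, giving case (i). The same ``connectedness kills one part'' argument handles a degenerate sub-case when $S \neq \emptyset$ but one of the $F_i$ equals $S$ itself: then $F = F_{3-i} \Delta S \subseteq E(M_{3-i})$ (or differs from a subset of $E(M_{3-i})$ only inside $T$), and again one reduces to a flat inside a single $E(M_i)$.

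For the two genuinely ``mixed'' cases, the two things to prove are: (a) $F_1$ and $F_2$ can be taken to be \emph{connected} flats of $M_1$ and $M_2$, and (b) the rank formula $\rk(F) = \rk(F_1) + \rk(F_2) - |S|$ holds. For (b) I would use the rank identity from Lemma~\ref{lem:3sumfacts}(i) applied to the restriction: $M \restrict F$ is itself a $3$-sum (or $2$-sum, or $1$-sum) of $M_1 \restrict F_1$ and $M_2 \restrict F_2$ glued along $S$, and the rank of a $t$-sum is $\rk + \rk - 2(t-1)$; when $S$ is a singleton this is the $2$-sum formula $\rk(F_1) + \rk(F_2) - 1$ after noting the shared element is not a loop (it lies in the triangle $T$), and when $S = T$ it is the $3$-sum formula $\rk(F_1) + \rk(F_2) - 2$. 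One must check $S$ really behaves as a $2$-sum element resp.\ as a triangle inside the restrictions — i.e. that $S$ is not a coloop and $T$ contains no cocircuit of $M_i \restrict F_i$ — which should follow from the corresponding properties in $M_i$ together with $F_i$ being a flat. For (a), I would argue that if, say, $F_1$ decomposes as a separator-respecting partition of $M_1 \restrict F_1$, then correspondingly $F$ would decompose, contradicting that $F$ is connected; more carefully, one shows any $1$-separation of $M_1 \restrict F_1$ that splits $S$ off into one side, or splits $F_1 \setminus S$, lifts to a $1$-separation of $M \restrict F$. This uses that circuits of $M \restrict F$ are $\Delta$-combinations of circuits of the two parts meeting $T$ in the same set (Lemma~\ref{lem:3sumfacts}(iii)).

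The step I expect to be the main obstacle is (a), establishing that the parts $F_1$, $F_2$ are connected flats, together with getting the bookkeeping of the degenerate sub-cases exactly right (empty parts, parts equal to $S$, the two elements of $T$ sub-case being impossible). The rank formula (b) is essentially the $t$-sum rank identity applied to restrictions and is routine once one verifies the gluing hypotheses. I would also double-check that ``connected flat'' here allows or forbids the empty flat / loops in the way the ambient formula $P(M) = \{x \ge 0 : x(F) \le \rk(F)\ \forall\text{ connected flats } F\}$ needs, since the paper states that identity for loopless $M$; any element that is a loop in $M_i \restrict F_i$ but whose closure behavior changes in $M$ would need a separate remark, but triangle elements are non-loops so this should not actually arise.
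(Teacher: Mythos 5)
Your plan is essentially the paper's proof: both start from Lemma~\ref{lem:3sumfacts}(ii) to write $F=F_1\Delta F_2$ with $S:=F_1\cap T=F_2\cap T$, rule out $|S|=2$ because a flat cannot contain exactly two elements of a triangle, obtain the rank formulas by recognizing $M\restrict F$ as the $2$-sum (resp.\ $3$-sum) of $M_1\restrict F_1$ and $M_2\restrict F_2$ through the cycle description, and get connectedness of $F_1,F_2$ (and the impossibility of a crossing flat with $S=\emptyset$) by lifting circuits between $M\restrict F$ and the parts; the paper is exactly as brief as you are on the connectedness step and on the gluing hypotheses you flag (note those checks, e.g.\ that an element of $S$ is not a coloop of $M_i\restrict F_i$, come from connectedness of $F$ rather than from flatness).

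One step of your plan fails as stated: the degenerate sub-case $F_i=S$. You claim that then $F=F_{3-i}\setminus S\subseteq E(M_{3-i})$ ``reduces to a flat inside a single $E(M_i)$'', i.e.\ to case (i). But a connected flat of $M$ contained in $E(M_{3-i})\setminus T$ need not be a flat of $M_{3-i}$: its closure in $M_{3-i}$ may pick up elements of $T$, and those elements are invisible in $M$, so flatness in $M$ does not forbid this. Concretely, let $M_1=M(G_1)$, $M_2=M(G_2)$ be glued along the triangle on $\{u,v,w\}$ (triangle edges deleted), with $G_1$ simple and $G_2$ containing four edges $ua,av,vb,bu$ forming an induced cycle avoiding $T$ (and with $w$ having further neighbours so that the $3$-sum hypotheses hold). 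Then $F=\{ua,av,vb,bu\}$ is a connected flat of $M$ contained in $E(M_2)$, yet $uv\in\cl_{M_2}(F)$, so $F$ is not a flat of $M_2$; it satisfies the lemma only through case (ii), with $F_1=S=\{uv\}$ and $F_2=F\cup S$. The fix is not to ``reduce'' at all: keep the decomposition with $F_i=S$ (which Lemma~\ref{lem:3sumfacts}(ii) already certifies to be a flat, and which is connected, being a single element or the circuit $T$); the rank identity in this sub-case amounts to the observation that the elements of $S$ lie in $\cl_{M_{3-i}}(F)$, and case (i) occurs precisely when they do not, i.e.\ when $F$ really is a flat of $M_{3-i}$. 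To be fair, the paper's own write-up is just as brisk here --- its first case simply asserts that a flat of $M$ contained in $E(M_i)$ is a flat of $M_i$ --- so this is the one spot where your eventual write-up should be more careful than both; the lemma itself is unaffected, since such flats are covered by cases (ii)--(iii).
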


For simplicity (and without loss of generality) throughout the paper we assume that the matroids considered have no loop.

\subsection{A first asymmetric formulation}

We start by recalling a well known result by Balas \cite{balas1979disjunctive}, that we will need below. We state a refinement of it that is proved in \cite{weltge2015sizes}.
\begin{proposition}\label{prop:Balas}
Let $P_1,\dots,P_k\in\R^n$ be polytopes of dimension at least 1. Then $$\xc(\conv \bigcup_{i=1}^k P_i)\leq \sum_{i=1}^k \xc(P_i).$$ 
\end{proposition}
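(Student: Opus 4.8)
\textbf{Proof proposal for Proposition~\ref{prop:Balas}.} The plan is to recall Balas's classical union-of-polytopes extended formulation and observe that it has the required size. Write each $P_j = \{x \in \R^n \mid A_j x \leq b_j\}$ with $A_j \in \R^{m_j \times n}$, so that $\sum_j m_j = \sum_j \xc(P_j)$ after taking irredundant descriptions. Since each $P_j$ is a nonempty polytope, it is bounded, and the homogenization $\{(x,\lambda) \in \R^{n+1} \mid A_j x \leq \lambda b_j,\ \lambda \geq 0\}$ is exactly the cone over $P_j \times \{1\}$; call it $C_j$. I would then introduce variables $(y^1,\dots,y^k,\lambda^1,\dots,\lambda^k)$ and consider the polytope
\begin{equation*}
Q = \Bigl\{(x,y^1,\dots,y^k,\lambda^1,\dots,\lambda^k) \ \Bigm|\ A_j y^j \leq \lambda^j b_j\ \forall j,\ \ \lambda^j \geq 0\ \forall j,\ \ \textstyle\sum_{j} y^j = x,\ \ \sum_j \lambda^j = 1 \Bigr\}.
\end{equation*}
The projection $\pi(x,y,\lambda) = x$ is affine, and the claim is that $\pi(Q) = \conv\bigcup_j P_j$. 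The number of inequality constraints of $Q$ is $\sum_j m_j + \sum_j 1 = \sum_j (m_j+1)$; the refinement of \cite{weltge2015sizes} shaves the $+1$'s (using that each $P_j$ has dimension at least $1$, so the normalization $\sum_j \lambda^j = 1$ together with the boundedness lets one absorb the sign constraints $\lambda^j \geq 0$ into the $A_j y^j \leq \lambda^j b_j$ block, or equivalently drop them as implied), giving exactly $\sum_j m_j = \sum_j \xc(P_j)$ facets.

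The correctness of $\pi(Q) = \conv\bigcup_j P_j$ is the only real content, and it is standard. For the inclusion $\pi(Q) \subseteq \conv\bigcup_j P_j$: given a point of $Q$, for each $j$ with $\lambda^j > 0$ the vector $y^j/\lambda^j$ satisfies $A_j(y^j/\lambda^j) \leq b_j$, hence lies in $P_j$; for $j$ with $\lambda^j = 0$, since $P_j$ is a nonempty bounded polytope the cone $C_j$ has only $\lambda = 0$ at the apex, so $A_j y^j \leq 0$ forces $y^j = 0$. Thus $x = \sum_{j:\lambda^j>0} \lambda^j (y^j/\lambda^j)$ is a convex combination of points of the $P_j$'s. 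Conversely, any convex combination $x = \sum_j \mu_j p_j$ with $p_j \in P_j$, $\mu_j \geq 0$, $\sum_j \mu_j = 1$ is witnessed by $\lambda^j := \mu_j$ and $y^j := \mu_j p_j$, which satisfy all constraints of $Q$. Hence $\pi(Q)$ is exactly the convex hull.

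I would then note that the dimension-at-least-$1$ hypothesis is exactly what the refinement in \cite{weltge2015sizes} uses to remove the extra $k$ constraints and reach the clean bound $\sum_{i=1}^k \xc(P_i)$ with no additive overhead; since we cite that refinement directly, the proof reduces to invoking it. The main (and essentially only) obstacle is bookkeeping: making sure the count of inequalities is done against irredundant descriptions and that the $\lambda^j \geq 0$ constraints are genuinely implied (and not double-counted) in the refined formulation — but this is precisely what is established in \cite{weltge2015sizes}, so no new argument is needed here.

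\begin{proof}
This is Balas's union-of-polytopes result \cite{balas1979disjunctive}; the stated bound without additive overhead is the refinement proved in \cite{weltge2015sizes}. For completeness we sketch the construction. Write each $P_j = \{x \in \R^n \mid A_j x \leq b_j\}$ for $j \in [k]$, with the descriptions chosen irredundant so that the number of rows of $A_j$ equals $\xc(P_j)$. Since each $P_j$ is a nonempty polytope, it is bounded. Introduce new variables $y^1,\dots,y^k \in \R^n$ and $\lambda^1,\dots,\lambda^k \in \R$, and let $Q$ be the polytope defined by
\[
A_j y^j \leq \lambda^j b_j \ \ (j \in [k]), \qquad \lambda^j \geq 0 \ \ (j \in [k]), \qquad \sum_{j=1}^k y^j = x, \qquad \sum_{j=1}^k \lambda^j = 1.
\]
Let $\pi$ be the affine projection onto the $x$-coordinates. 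If $(x,y,\lambda) \in Q$, then for each $j$ with $\lambda^j > 0$ we have $y^j/\lambda^j \in P_j$, while for each $j$ with $\lambda^j = 0$ boundedness of $P_j$ forces $y^j = \mathbf 0$; hence $x = \sum_{j : \lambda^j > 0} \lambda^j (y^j/\lambda^j) \in \conv \bigcup_{j=1}^k P_j$. Conversely, any $x = \sum_{j=1}^k \mu_j p_j$ with $p_j \in P_j$, $\mu_j \geq 0$ and $\sum_j \mu_j = 1$ is the image under $\pi$ of the point with $\lambda^j = \mu_j$ and $y^j = \mu_j p_j$, which lies in $Q$. Therefore $\pi(Q) = \conv \bigcup_{j=1}^k P_j$. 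The formulation above has $\sum_{j=1}^k \xc(P_j) + k$ inequality constraints; using that each $P_j$ has dimension at least $1$, the refinement of \cite{weltge2015sizes} shows the $k$ nonnegativity constraints on the $\lambda^j$ are not needed, yielding $\xc(\conv \bigcup_{j=1}^k P_j) \leq \sum_{j=1}^k \xc(P_j)$.
\end{proof}
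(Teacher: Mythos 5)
Your proposal is correct and takes essentially the same route as the paper, which proves nothing itself and simply cites \cite{balas1979disjunctive} for the disjunctive construction and \cite{weltge2015sizes} for the refinement that removes the additive overhead; your sketch of the lifted formulation and the projection argument is the standard one and is fine. The only gloss worth noting is that writing $P_j=\{x \mid A_jx\leq b_j\}$ with $\xc(P_j)$ rows conflates the facet count of $P_j$ with its extension complexity — to get the stated bound one applies the construction to minimal extensions of the $P_j$ rather than to the $P_j$ themselves — but this bookkeeping is exactly what the cited refinement in \cite{weltge2015sizes} supplies, so no new argument is needed.
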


Let $M_1$, $M_2$ be binary matroids sharing a triangle $T := \{\alpha,\beta,\gamma\}$, and let $M := M_1 \oplus_3 M_2$. We give an extended formulation for $P(M)$ showing that
$$
\xc(P(M_1\oplus_3 M_2))\leq 2\xc(P(M_1))+16\xc(P(M_2))\,.
$$

For $X \subseteq T$, we consider the convex hull $P(M_2 \delete T,X)$ of all characteristic vectors $\chi^I \in \{0,1\}^{E(M_2 \delete T)}$ where $I \subseteq E(M_2 \delete T)$ is an independent set of $M_2$ whose span $F$ satisfies $F \cap T \subseteq X$. Observe that
\begin{align*}
P(M_2 \delete T,\emptyset) &= 
P(M_2 \contract \alpha \contract \beta \contract \gamma) = P(M_2 \contract T)\,,\\ 
P(M_2 \delete T,T) &= 
P(M_2 \delete \alpha \delete \beta \delete \gamma) = P(M_2 \delete T)\,,\\ 
P(M_2 \delete T,\{\alpha\}) &= P(M_2 \contract \beta \delete \alpha \delete \gamma)\cap P(M_2 \contract \gamma \delete \alpha \delete \beta)
\end{align*}
and similarly for $P(M_2 \delete T,\{\beta\})$ and $P(M_2 \delete T,\{\gamma\})$ (the last equality follows from matroid intersection).

\begin{proposition}\label{prop:asymmetric}
Let $M_1$, $M_2$ be binary matroids sharing a triangle $T := \{\alpha,\beta,\gamma\}$, and let $M := M_1 \oplus_3 M_2$. Define $P'(M_2)$ as 
$$
P'(M_2) := \conv \left( 
P(M_2 \delete T,\emptyset) \times \{\mathbf{0}\}\ \cup\ \bigcup_{t \in T} P(M_2 \delete T,\{t\}) \times \{\mathbf{e}_{t}\}
\ \cup\ P(M_2 \delete T,T) \times \{\mathbf{e}_\alpha + \mathbf{e}_\beta\}\right)
$$
and $P''(M_2)$ similarly, replacing the last polytope in the union by $P(M_2 \delete T,T) \times \{\mathbf{e}_\beta + \mathbf{e}_\gamma\}$. If we let
\begin{align*} \label{eq:EF}
Q(M) := \Big\{(x^1,x^2)\in \R^{E(M)} \mid \exists \ x^{T'}, x^{T''}\in \R^{T} :\:& 
(x^1,x^{T'})\in P(M_1),\ (x^1,x^{T''})\in P(M_1)\\ & (x^2,x^{T'})\in P'(M_2),\ (x^2,x^{T''})\in P''(M_2)\Big\}\,
\end{align*}
then $P(M)=Q(M)$. In particular, we have $\xc(P(M_1\oplus_3 M_2))\leq 2\xc(P(M_1))+16\xc(P(M_2))$.
\end{proposition}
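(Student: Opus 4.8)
The plan is to prove $P(M) = Q(M)$ by showing mutual containment, with the characterization of bases in Lemma~\ref{lem:3sumbases} (and the description of connected flats in Lemma~\ref{lem:3sumflats}) as the workhorse. First I would unpack the definitions: a point $(x^2, x^{T'}) \in P'(M_2)$ is, by Balas' disjunction, a convex combination of points living in the ``slices'' $P(M_2 \delete T, X) \times \{v_X\}$ for the relevant $X \in \{\emptyset, \{\alpha\}, \{\beta\}, \{\gamma\}, T\}$ and corresponding triangle-vectors $v_X$; the key observation is that the triangle-coordinate $x^{T'}$ records, roughly, ``which elements of $T$ are in the span of the $M_2$-part'', and that $v_\emptyset = \mathbf 0$, $v_{\{t\}} = \mathbf e_t$, $v_T = \mathbf e_\alpha + \mathbf e_\beta$ are chosen precisely so that the corresponding $M_1$-side constraint $(x^1, x^{T'}) \in P(M_1)$ enforces the right rank bound on flats crossing the triangle. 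The reason two copies $P'(M_2), P''(M_2)$ (with $v_T = \mathbf e_\alpha + \mathbf e_\beta$ resp.\ $\mathbf e_\beta + \mathbf e_\gamma$) and two copies of $P(M_1)$ are needed is that a single choice of $v_T$ cannot simultaneously certify the flat inequalities for all connected flats $F$ of $M$ of type (iii) in Lemma~\ref{lem:3sumflats}; using $\mathbf e_\alpha + \mathbf e_\beta$ handles flats whose $M_1$-rank is tight, and $\mathbf e_\beta + \mathbf e_\gamma$ handles the complementary case (the shared coordinate $\beta$ makes the two descriptions consistent).

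For the inclusion $P(M) \subseteq Q(M)$, I would take a vertex $\chi^B$ of $P(M)$ with $B$ a basis of $M$, split it as $B = (B \cap E_1) \cup (B \cap E_2)$ according to Lemma~\ref{lem:3sumbases}, and in each of the cases (i), (ii) of that lemma exhibit the witnesses $x^{T'}, x^{T''}$ explicitly. Concretely, $x^1$ is the restriction of $\chi^B$ to $E(M_1 \delete T)$, $x^2$ its restriction to $E(M_2 \delete T)$, and one checks that extending $x^1$ by an appropriate $0/1$ vector on $T$ lands in $P(M_1)$ (here one uses that $B_i \pm t_j$ being a basis of $M_i$, as in Lemma~\ref{lem:3sumbases}(ii), means the extended vector is again a characteristic vector of an independent set of $M_1$), and symmetrically that $x^2$ extended by the matching triangle-vector lies in $P'(M_2)$ and $P''(M_2)$ — this is where the three-way case split on $|B \cap T|$ and the definitions of the slices $P(M_2 \delete T, X)$ are used. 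Since $Q(M)$ is convex and contains all vertices of $P(M)$, we get $P(M) \subseteq Q(M)$.

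For the reverse inclusion $Q(M) \subseteq P(M)$, I would use the inequality description $P(M) = \{x \ge 0 : x(F) \le \rk(F) \text{ for all connected flats } F\}$ recalled before Lemma~\ref{lem:3sumflats}. Given $(x^1,x^2) \in Q(M)$ with witnesses $x^{T'}, x^{T''}$, nonnegativity is immediate. For a connected flat $F$ of $M$, invoke Lemma~\ref{lem:3sumflats}: if $F \subseteq E(M_i)$ the bound $x(F) \le \rk(F)$ follows from $(x^i, \cdot) \in P(M_i)$ together with Lemma~\ref{lem:3sumfacts}(iv); if $F = F_1 \Delta F_2$ with $F_1 \cap T = F_2 \cap T$ a singleton $\{t\}$, add $x^1(F_1) \le \rk(F_1)$ and $x^2(F_2) \le \rk(F_2)$ (from the $M_1$- and $P'(M_2)$-sides, using that $x^{T'}$ must be at least $\mathbf e_t$-ish on that coordinate — this is exactly what the slice $P(M_2\delete T,\{t\})$ encodes) and cancel the doubly-counted contribution of $t$ using $\rk(F) = \rk(F_1) + \rk(F_2) - 1$; and if $F_1 \cap T = F_2 \cap T = T$, combine the $M_1$-side with whichever of $P'(M_2), P''(M_2)$ makes the triangle contribution cancel correctly against $\rk(F) = \rk(F_1)+\rk(F_2)-2$. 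The main obstacle — and the step I expect to require the most care — is precisely this last case: verifying that, after taking convex combinations (so $x^{T'}, x^{T''}$ need not be integral), the two asymmetric choices $v_T = \mathbf e_\alpha + \mathbf e_\beta$ and $\mathbf e_\beta + \mathbf e_\gamma$ between them always furnish an inequality that beats $x(F) \le \rk(F)$ for every type-(iii) flat; this is the technical heart of the ``asymmetric'' trick and likely needs a short separate lemma about how the extra triangle coordinate behaves on $P'(M_2)$ and $P''(M_2)$. The final size bound $\xc(P(M)) \le 2\xc(P(M_1)) + 16\xc(P(M_2))$ then follows by counting: $Q(M)$ is obtained by intersecting (in a lifted space) two copies of $P(M_1)$ with $P'(M_2)$ and $P''(M_2)$, and by Proposition~\ref{prop:Balas} each of $P'(M_2), P''(M_2)$ has extension complexity at most $\sum_{X} \xc(P(M_2 \delete T, X)) \le 8\,\xc(P(M_2))$, since each slice $P(M_2 \delete T, X)$ is either a contraction/deletion minor of $M_2$ (extension complexity at most $\xc(P(M_2))$) or, for singletons $X$, an intersection of two such minors (extension complexity at most $2\,\xc(P(M_2))$), and there are five slices summing to $1+2+2+2+1 = 8$.
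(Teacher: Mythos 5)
Your overall architecture (bases via Lemma~\ref{lem:3sumbases} for one inclusion, connected flats via Lemma~\ref{lem:3sumflats} for the other, and the Balas counting $1+2+2+2+1=8$ per copy of $P'(M_2),P''(M_2)$ for the size bound) is the same as the paper's, and the counting is correct. However, the key mechanism of the asymmetric construction is misassigned, and as written your reverse inclusion breaks. You propose to certify every flat of type (ii) in Lemma~\ref{lem:3sumflats} (those with $F_1\cap T=F_2\cap T=\{t\}$ a singleton) from the $P'(M_2)$-side, on the grounds that ``$x^{T'}$ must be at least $\mathbf{e}_t$-ish on that coordinate''; this is false for $t=\gamma$. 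A point of $P'(M_2)$ lying in the slice $P(M_2\delete T,T)\times\{\mathbf{e}_\alpha+\mathbf{e}_\beta\}$ has $x^{T'}_\gamma=0$ while $x^2$ is only constrained to lie in $P(M_2\delete T)$, so $x^2(F\cap E(M_2))$ can reach $\rk(F_2)$ whenever $\gamma\in\cl(F_2\setminus T)$; hence the needed inequality $x^2(F\cap E(M_2))\le \rk(F_2)-1+x^{T'}_\gamma$ fails on that slice and cannot be deduced from $(x^2,x^{T'})\in P'(M_2)$. This $\gamma$-singleton case is exactly where the second witness is needed: the paper certifies it via $(x^1,x^{T''})\in P(M_1)$ and $(x^2,x^{T''})\in P''(M_2)$, whose last slice carries $\mathbf{e}_\beta+\mathbf{e}_\gamma$. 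Conversely, the case you single out as the ``technical heart'' --- flats of type (iii), with $F_1\cap T=T$ --- is the easy one: both $\mathbf{e}_\alpha+\mathbf{e}_\beta$ and $\mathbf{e}_\beta+\mathbf{e}_\gamma$ have coordinate sum $2$, so either copy alone yields $x^2(F\cap E(M_2))\le\rk(F_2)-2+x^{T'}_\alpha+x^{T'}_\beta+x^{T'}_\gamma$, checked slice by slice after reducing to vertices of $P'(M_2)$ (legitimate, since the target inequality is linear in $(x^2,x^{T'})$). So the two copies are forced by the singleton case, not by the full-triangle case, and your plan must be rerouted accordingly: handle $\{\alpha\},\{\beta\}$ with $x^{T'}$ and $P'(M_2)$, and $\{\gamma\}$ with $x^{T''}$ and $P''(M_2)$.

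A smaller gap: for $P(M)\subseteq Q(M)$ you exhibit witnesses only for basis vectors (Lemma~\ref{lem:3sumbases} speaks only about bases), yet conclude that $Q(M)$ ``contains all vertices of $P(M)$''; the vertices are the characteristic vectors of \emph{all} independent sets, not just bases. The paper closes this by observing that $Q(M)$ is of antiblocking type (each constituent polytope is down-closed in the non-triangle coordinates), so containing $B(M)$ suffices; you need this observation, or a separate argument for non-maximal independent sets.
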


\begin{proof}
To prove $P(M) \subseteq Q(M)$, we show that $B(M) \subseteq Q(M)$ using Lemma~\ref{lem:3sumbases}, and observe that $Q(M)$ is of antiblocking type (this follows from the fact that $P(N)$ is of antiblocking type for every matroid $N$). Let $B \in \mathcal{B}(M)$ be a basis of $M$. We distinguish cases as in Lemma~\ref{lem:3sumbases}. For the sake of conciseness, we skip the cases that follow from other cases by symmetry, and omit the conditions on the bases $B_1$ and $B_2$ (these can be found in the statement of the lemma).\medskip

\noindent (i) First, assume $B = B_1 \cup (B_2 - \alpha - \beta)$ and let $I_2 := B_2 - \alpha - \beta$. The span of $I_2$ (in $M_2$) is disjoint from $T$. Hence, we have $\chi^{I_2} \in P(M_2 \delete T, \emptyset)$ and $(\chi^{I_2},\mathbf{0}) \in P'(M_2) \cap P''(M_2)$. Then it is easy to check that $\chi^B \in Q(M)$ by setting $x^{T'} = x^{T''} := \mathbf{0}$. 

Next, assume $B = (B_1 - \alpha - \beta) \cup B_2$. Then it is easy to check that $\chi^B \in Q(M)$ by setting $x^{T'} := \mathbf{e}_\alpha + \mathbf{e}_\beta$ and $x^{T''} := \mathbf{e}_\beta + \mathbf{e}_\gamma$.\medskip

\noindent (ii) $B = (B_1 - \alpha) \cup (B_2 - \beta)$. Then we see that $\chi^B \in Q(M)$ by setting $x^{T'} = x^{T''} := \mathbf{e}_\alpha$.\medskip

To prove $Q(M) \subseteq P(M)$, let $F \subseteq E(M)$ be any connected flat and let $x=(x^1,x^2)$ be any point of $Q(M)$. We have to show that $x^1(F\cap E(M_1))+x^2(F\cap E(M_2)) \leq \rk(F)$. We use Lemma~\ref{lem:3sumflats}. If $F \subseteq E(M_1)$, or $F \subseteq E(M_2)$, there is nothing to show. Hence we may focus on cases (ii) and (iii) of the lemma. Therefore, $F = F_1 \Delta F_2$, where $F_i$ is a connected flat of $M_i$ for $i \in [2]$.\medskip

\noindent (ii) $F_1 \cap T = F_2 \cap T$ is a singleton, and $\rk(F)=\rk(F_1)+\rk(F_2)-1$. First, assume that $F_1 \cap T = F_2 \cap T = \{\alpha\}$. Let $x^{T'}$ be such that $(x^1,x^{T'})\in P(M_1)$ and $(x^2,x^{T'}) \in P'(M_2)$. Clearly, we have $x^1(F\cap E(M_1))+x^{T'}_{\alpha} \leq \rk(F_1)$. We claim that
\begin{equation} \label{eq:2ndpart}
x^2(F \cap E(M_2)) \leq \rk(F_2)-1+x^{T'}_{\alpha}\,.
\end{equation}
This concludes the proof for this case as summing the two inequalities, we get the desired inequality. To prove the claim, we may assume that $(x^2,x^{T'})$ is a vertex of $P'(M_2)$. We consider all the possible subcases one after the other.
\begin{itemize}
    \item If $(x^2,x^{T'}) \in P(M_2 \delete T,\emptyset) \times \{\mathbf{0}\}$, then since the rank of $F \cap E(M_2)$ in $M_2 \contract T$ is $\rk(F_2)-1$, \eqref{eq:2ndpart}~holds.
    
    \item If $(x^2,x^{T'}) \in P(M_2 \delete T, \{\alpha\}) \times \{\mathbf{e}_{\alpha}\}$, then $x^{T'}_{\alpha}=1$ and \eqref{eq:2ndpart}~holds.
    
    \item If $(x^2,x^{T'}) \in P(M_2 \delete T, \{\beta\}) \times \{\mathbf{e}_{\beta}\}$, then in particular $x^2\in P(M_2 \contract \alpha \delete \beta \delete \gamma)$, and the rank of $F \cap E(M_2)$ in $M_2 \contract \alpha \delete \beta \delete \gamma$ is $\rk(F_2)-1$. Hence, \eqref{eq:2ndpart}~holds. 
    
    \item If $(x^2,x^{T'}) \in P(M_2 \delete T, \{\gamma\}) \times \{\mathbf{e}_{\gamma}\}$ then a similar argument as in the previous case applies.
    
    \item If $(x^2,x^{T'}) \in P(M_2 \delete T, T) \times \{\mathbf{e}_{\alpha} + \mathbf{e}_{\beta}\}$, then $x^{T'}_{\alpha}=1$ and \eqref{eq:2ndpart}~holds.
\end{itemize}

The above argument can be easily adapted in case $F_1 \cap F_2 = \{\beta\}$. If $F_1 \cap  F_2 = \{\gamma\}$, one needs to use the variables $x^{T''}$ instead. We can show similarly as above that, whenever 
$(x^1,x^{T''})\in P(M_1)$ and $(x^2,x^{T''}) \in P''(M_2)$,
$$
x^2(F \cap E(M_2)) \leq \rk(F_2)-1+x^{T''}_{\gamma}\,.
$$
Together with $x^1 (F \cap E(M_1))+x^{T''}_{\gamma} \leq \rk(F_1)$, this concludes this case.\medskip
    
\noindent (iii) $F_1 \cap T = F_2 \cap T = T$ and $\rk(F)=\rk(F_1)+\rk(F_2)-2$. Again, let $x^{T'}$ be such that $(x^1,x^{T'})\in P(M_1)$ and $(x^2,x^{T'}) \in P'(M_2)$.
We have $x^1(F\cap E(M_1))+x^{T'}_{\alpha}+x^{T'}_{\beta}+x^{T'}_{\gamma}\leq \rk(F_1)$. We claim that 
\begin{equation} \label{eq:2ndpartbis}
x^2(F\cap E(M_2)) \leq \rk(F_2)-2+x^{T'}_{\alpha}+x^{T'}_{\beta}+x^{T'}_{\gamma}
\end{equation}
holds, which concludes the proof for this case as summing the two inequalities we get the desired inequality. As above, we consider all subcases in order to establish~\eqref{eq:2ndpartbis}.

\begin{itemize}
    \item If $(x^2,x^{T'}) \in P(M_2 \delete T,\emptyset) \times \{\mathbf{0}\}$, then since the rank of $F \cap E(M_2)$ in $M_2 \contract T$ is $\rk(F_2)-2$, \eqref{eq:2ndpartbis}~holds.
    
    \item If $(x^2,x^{T'}) \in P(M_2 \delete T, \{t\}) \times \{\mathbf{e}_t\}$ for some $t \in T$ then $x^{T'}_{t}=1$. Since the rank of $F \cap E(M_2)$ in the corresponding minor of $M_2$ is $\rk(F_2)-1$, \eqref{eq:2ndpartbis} holds. 

    \item If $(x^2,x^{T'}) \in P(M_2 \delete T, T) \times \{\mathbf{e}_{\alpha} + \mathbf{e}_{\beta}\}$, then $x^{T'}_{\alpha}=x^{T'}_{\beta}=1$ and \eqref{eq:2ndpartbis}~holds.
\end{itemize}
\end{proof}

\subsection{Making the formulation smaller}

In the upper bound on $\xc(P(M_1 \oplus_3 M_2))$ from Proposition~\ref{prop:asymmetric}, the term $2 \xc(P(M_1))$ comes from the constraints $(x^1,x^{T'}) \in P(M_1)$, $(x^1,x^{T''}) \in P(M_1)$ that are part of the extended formulation. In order to make this term smaller, and hence the formulation more compact on the $M_1$ side, it suffices to find a smaller extended formulation for the polytope 
$$
Q_T(M_1) := \{(x^1,x^{T'},x^{T''}) \mid  (x^1,x^{T'}) \in P(M_1), (x^1,x^{T''}) \in P(M_1)\}\,.
$$

Now with a bit more thinking, we see that it is not necessary to express $Q_T(M_1)$ exactly. In fact, the proof goes through as long as our extended formulation for that part is contained in $Q_T(M_1)$ and contains 
 $$
P_T(M_1) := \conv\{ (\chi^I,\chi^{I'},\chi^{I''}) \mid I \cup I',I \cup I' \in \mathcal{I}(M_1),\ I'=I'' \mbox{ or } I'=\{\alpha,\beta\},\ I''=\{\beta,\gamma\}\}\,.
$$ 
In other words, all we need is an extended formulation \emph{for the pair} of nested polytopes $(P_T(M_1),Q_T(M_1))$.

Before stating our next result, we give some terminology relative to pairs of polytopes. If $P \subseteq Q \subseteq \R^d$ are nested polytopes, an \emph{extension} of the pair $(P,Q)$ is an extension of some polytope $R$ such that $P \subseteq R \subseteq Q$. Similarly, an \emph{extended formulation} for $(P,Q)$ is an extended formulation for such a polytope~$R$. The \emph{extension complexity} of $(P,Q)$ is defined as $\xc(P,Q) := \min \{\xc(R) \mid R$ polytope, $P \subseteq R \subseteq Q\}$. 

The proof of the following is simple and omitted.

\begin{proposition} \label{prop:asymmetricpair}
Let $M_1$, $M_2$ be binary matroids sharing a triangle $T$, and let $M := M_1 \oplus_3 M_2$. Let $P'(M_2), P''(M_2)$ be defined as in Proposition~\ref{prop:asymmetric}, and $P_T(M_1), Q_T(M_1)$ as above. If $R_T(M_1)$ is any polytope such that $P_T(M_1)\subseteq R_T(M_1)\subseteq Q_T(M_1)$, then
\begin{align*} 
P(M)=\{(x^1,x^2)\in \R^{E(M)} \mid \exists \ x^{T'}, x^{T''} \in \R^{T} :\: &(x^1,x^{T'},x^{T''})\in R_T(M_1)\\ & (x^2,x^{T'})\in P'(M_2), (x^2,x^{T''})\in P''(M_2)\}.
\end{align*}
In particular, we have $\xc(P(M_1\oplus_3 M_2))\leq \xc(P_T(M_1),Q_T(M_1))+16\xc(P(M_2))$.
\end{proposition}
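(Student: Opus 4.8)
The plan is to deduce the statement from Proposition~\ref{prop:asymmetric} by exploiting monotonicity in the ``$M_1$ side''. For a polytope $R$ in the variables $(x^1,x^{T'},x^{T''})$, write $Q^{R}(M)$ for the set obtained from the right-hand side of the claimed identity when $R_T(M_1)$ is replaced by $R$; note that $Q^{R}(M)$ is monotone in $R$, and that $Q^{Q_T(M_1)}(M)$ is exactly the polytope $Q(M)$ of Proposition~\ref{prop:asymmetric}, since the pair of conditions $(x^1,x^{T'})\in P(M_1)$, $(x^1,x^{T''})\in P(M_1)$ is by definition the condition $(x^1,x^{T'},x^{T''})\in Q_T(M_1)$. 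Hence Proposition~\ref{prop:asymmetric} gives $Q^{Q_T(M_1)}(M)=P(M)$, and since $P_T(M_1)\subseteq R_T(M_1)\subseteq Q_T(M_1)$ we get $Q^{P_T(M_1)}(M)\subseteq Q^{R_T(M_1)}(M)\subseteq Q^{Q_T(M_1)}(M)=P(M)$. So the whole content is to prove the reverse inclusion $P(M)\subseteq Q^{P_T(M_1)}(M)$, i.e.\ to treat the extreme case $R_T(M_1)=P_T(M_1)$.

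For that I would re-examine the proof that $B(M)\subseteq Q(M)$ inside Proposition~\ref{prop:asymmetric}, and check that each certificate produced there is already feasible for the smaller polytope $P_T(M_1)$. Going through the cases of Lemma~\ref{lem:3sumbases}, the pair $(x^{T'},x^{T''})$ attached to a basis $B$ of $M$ is always such that $(x^1,x^{T'},x^{T''})$ is one of the points $(\chi^I,\chi^{I'},\chi^{I''})$ generating $P_T(M_1)$: in case~(ii) it is $(\chi^{B_1-t_1},\mathbf e_{t_1},\mathbf e_{t_1})$, which has $I'=I''$; in case~(i) it is either $(\chi^{B_1},\mathbf 0,\mathbf 0)$ (when $B_1\cap T=\emptyset$) or a triple of the form $(\chi^{B_1\setminus T},\mathbf e_\alpha+\mathbf e_\beta,\mathbf e_\beta+\mathbf e_\gamma)$, which has $I'=\{\alpha,\beta\}$ and $I''=\{\beta,\gamma\}$. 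To see that such a triple lies in $P_T(M_1)$ one must verify that $B_1$ together with each of the two appended $T$-sets is independent in $M_1$. The only instance requiring an argument is that $B_1-t+t''$ is a basis of $M_1$ whenever $B_1$ is a basis of $M_1$ containing two triangle elements $t,t'$ and $t''$ is the third element of $T$; this holds because $T\subseteq B_1+t''$ is a circuit, so $T$ is the fundamental circuit $C(t'',B_1)$, and therefore $B_1+t''-t$ is again a basis. This establishes $B(M)\subseteq Q^{P_T(M_1)}(M)$.

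To pass from the bases of $M$ to all of $P(M)$, I would use the identity $P(M)=\{x\mid \exists y\in B(M),\ \mathbf 0\le x\le y\}$ recorded in the introduction, together with the fact that $Q^{P_T(M_1)}(M)$ is convex and down-monotone. The down-monotonicity is seen exactly as for $Q(M)$ in Proposition~\ref{prop:asymmetric}: $P_T(M_1)$ is down-monotone in its $x^1$-block because each fibre over a fixed pair of $T$-vectors is an independence polytope of a minor of $M_1$, or an intersection of two such, hence down-monotone, and a general fibre is a union of Minkowski combinations of these; and $P'(M_2)$, $P''(M_2)$ are down-monotone in their $E(M_2)$-block for the same reason. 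Since $Q^{P_T(M_1)}(M)$ is convex and contains $\chi^B$ for every basis $B$, it contains $B(M)$, and being down-monotone it then contains $\{x\mid \exists y\in B(M),\ \mathbf 0\le x\le y\}=P(M)$, which completes the identity for every admissible $R_T(M_1)$.

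The extension-complexity bound is then immediate: the description of $P(M)=Q^{R_T(M_1)}(M)$ is literally an extended formulation obtained by juxtaposing one extended formulation each of $R_T(M_1)$, $P'(M_2)$ and $P''(M_2)$ (with the $x^{T'}$- and $x^{T''}$-variables shared appropriately), so $\xc(P(M))\le \xc(R_T(M_1))+\xc(P'(M_2))+\xc(P''(M_2))$; picking $R_T(M_1)$ so that $\xc(R_T(M_1))=\xc(P_T(M_1),Q_T(M_1))$ and bounding $\xc(P'(M_2))+\xc(P''(M_2))\le 16\,\xc(P(M_2))$ by Balas' theorem (Proposition~\ref{prop:Balas}) exactly as in Proposition~\ref{prop:asymmetric} yields the claim. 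I expect the real work to sit in the second paragraph: making sure that every certificate in the proof of Proposition~\ref{prop:asymmetric} can be recognised as a vertex of $P_T(M_1)$ and not merely of $Q_T(M_1)$ — which is precisely where the hypothesis that $T$ is a triangle is used, through the fundamental-circuit identity above — while the down-monotonicity bookkeeping of the third paragraph, though routine, is what legitimises reducing $P(M)$ to its set of bases.
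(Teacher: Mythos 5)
Your proposal is correct, and it is exactly the argument the paper has in mind for this statement (whose proof it declares ``simple and omitted''): the inclusion into $P(M)$ only uses $R_T(M_1)\subseteq Q_T(M_1)$ via the $Q(M)\subseteq P(M)$ half of Proposition~\ref{prop:asymmetric}, while for the reverse inclusion one re-checks that the lift points $(x^1,x^{T'},x^{T''})$ assigned to bases in that proof are generators of $P_T(M_1)$ (your fundamental-circuit exchange being the only point needing verification) and then closes up by convexity and the antiblocking/down-monotonicity argument, with the size bound obtained exactly as in Proposition~\ref{prop:asymmetric}.
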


\subsection{Dealing with several $3$-sums simultaneously}

We would now like to further extend the above results to the setting where $M = M_0 \oplus_3 \dots \oplus_3 M_k$ for some binary matroids $M_0$, $M_1$, \ldots, $M_k$ such that each $M_i$, $i \in [k]$ shares a triangle $T_i := \{\alpha_i,\beta_i,\gamma_i\}$ with $M_0$ and is disjoint from $M_j$ for all $j \in [k]$ such that $j \neq i$. Notice that a true star decomposition satisfies more conditions (see (i) and (ii) in Lemma~\ref{lem:star_decomposition}). In particular, $M_0$ is required to be graphic or cographic. This will be exploited in the next section. Here, $M_0$ can be any binary matroid. 

For simplicity, we partition $E(M_0)$ into $T_1, \dots, T_k$ and $E_0 := E(M_0) \setminus (T_1 \cup \cdots \cup T_k)$. We let

\begin{align*}
P_{T_1,\dots,T_k}(M_0) :=&\conv\Big\{(\chi^{J_0},\chi^{J_1'}, \chi^{J_1''},\dots, \chi^{J_k'}, \chi^{J_k''}) \in
\R^{E_0} \times \R^{T_1} \times \R^{T_1} \times \cdots \times \R^{T_k} \times \R^{T_k} \mid\\
&\qquad \qquad \forall J_1^*, \ldots, J_k^* : \forall i \in [k] : J_i^* \in \{J_i',J_i''\} : J_0 \cup J_1^* \cup \dots \cup J_k^* \in \mathcal{I}(M_0), \text{ and}\\
&\qquad \qquad \forall i \in [k] : J_i'=J_i'' \text{ or } J_i'=\{\alpha_i,\beta_i\},\ J_i''=\{\beta_i,\gamma_i\}\Big\}\,,\\
Q_{T_1,\dots,T_k}(M_0) :=&\Big\{ (x^0,x^{T_1'},x^{T_1''},\ldots,x^{T_k'},x^{T_k''}) \in \R^{E_0} \times \R^{T_1} \times \R^{T_1} \times \cdots \times \R^{T_k} \times \R^{T_k} \mid \\
&\qquad \forall T_1^*, \ldots, T_k^* : \forall i \in [k] : T_i^* \in \{T_i',T_i''\} : (x^0,x^{T_1^*},\dots,x^{T_k^*}) \in P(M_0)\Big\}.
\end{align*}

\begin{proposition}\label{prop:asymmetricstar}
Let $M = M_0 \oplus_3 M_1 \oplus_3 \cdots \oplus_3 M_k$ where $M_0, M_1, \ldots, M_k$ are binary matroids such that $M_1$, \ldots, $M_k$ are mutually disjoint. For $i \in [k]$, define $P'(M_i), P'(M_i)$ as in Proposition~\ref{prop:asymmetric}. If  $R_{T_1,\dots,T_k}(M_0)$ is any polytope such that $P_{T_1,\dots,T_k}(M_0)\subseteq R_{T_1,\dots,T_k}(M_0)\subseteq Q_{T_1,\dots,T_k}(M_0)$, then
\begin{align} \label{eq:EFstar}
P(M)=\Big\{(x^0,x^1,\dots,x^k)\in \R^{E(M)} \mid&\: \exists \ x^{T_1'}, x^{T_1''} \in \R^{T_1}, \ldots, x^{T_k'},  x^{T_k''} \in \R^{T_k} : &\\
\nonumber&\quad (x^0,x^{T_1'},x^{T_1''}, \dots, x^{T_k'},x^{T_k''})\in R_{T_1,\dots,T_k}(M_0)&\\ 
\nonumber&\quad \forall i \in [k] : (x^i,x^{T_i'})\in P'(M_i),\ (x^i,x^{T_i''})\in P''(M_i) \Big\}.
\end{align}
In particular, we have $\xc(P(M)) \leq \xc(P_{T_1,\dots,T_k}(M_0),Q_{T_1,\dots,T_k}(M_0))+16\sum_{i=1}^k\xc(P(M_i))$.
\end{proposition}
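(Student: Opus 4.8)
The plan is to reduce Proposition~\ref{prop:asymmetricstar} to Proposition~\ref{prop:asymmetricpair} by a ``one triangle at a time'' argument, or equivalently to mimic the proof of Proposition~\ref{prop:asymmetric} (via Lemma~\ref{lem:3sumbases} and Lemma~\ref{lem:3sumflats}) but carrying all $k$ triangles along simultaneously. I will call the right-hand side of \eqref{eq:EFstar} $Q(M)$ and show $P(M)=Q(M)$ by the usual two inclusions. The bound on $\xc(P(M))$ then follows immediately: the description \eqref{eq:EFstar} is an extended formulation of $P(M)$ whose constraints are those of $R_{T_1,\dots,T_k}(M_0)$ (contributing $\xc(P_{T_1,\dots,T_k}(M_0),Q_{T_1,\dots,T_k}(M_0))$ once we choose $R$ optimally) together with, for each $i\in[k]$, the two systems $(x^i,x^{T_i'})\in P'(M_i)$ and $(x^i,x^{T_i''})\in P''(M_i)$, each of which can be written with $8\xc(P(M_i))$ inequalities by Proposition~\ref{prop:Balas} applied to the five-term union defining $P'(M_i)$ and $P''(M_i)$ (three singletons $P(M_2\delete T,\{t\})$ are themselves intersections of two matroid polytopes, giving the constant $16$ as in Proposition~\ref{prop:asymmetric}). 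Summing gives exactly $\xc(P_{T_1,\dots,T_k}(M_0),Q_{T_1,\dots,T_k}(M_0))+16\sum_{i=1}^k\xc(P(M_i))$.

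For the inclusion $P(M)\subseteq Q(M)$, I would first observe, exactly as in Proposition~\ref{prop:asymmetric}, that $Q(M)$ is of antiblocking type (being built from antiblocking polytopes by intersections and affine preimages), so it suffices to check that every vertex $\chi^B$ of $P(M)$ with $B\in\mathcal B(M)$ lies in $Q(M)$. The key structural input is an iterated version of Lemma~\ref{lem:3sumbases}: since $M=M_0\oplus_3 M_1\oplus_3\cdots\oplus_3 M_k$ and the $M_i$ are attached along pairwise disjoint triangles $T_i\subseteq E(M_0)$, a basis $B$ of $M$ restricted to each $E(M_i)\setminus T_i$ gives an independent set $I_i$ of $M_i$ whose span meets $T_i$ in a prescribed set, and these local pictures interact with $M_0$ independently across different $i$. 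Concretely, for each $i$ I would read off from Lemma~\ref{lem:3sumbases} (applied to the single $3$-sum that glues $M_i$ into the rest) which of the cases (i)/(ii) occurs, set the auxiliary vectors $x^{T_i'},x^{T_i''}$ exactly as in the corresponding case of the proof of Proposition~\ref{prop:asymmetric} (so $\mathbf 0$, $\mathbf e_t$, or $\{\mathbf e_{\alpha_i}+\mathbf e_{\beta_i},\,\mathbf e_{\beta_i}+\mathbf e_{\gamma_i}\}$), and check $(x^i,x^{T_i'})\in P'(M_i)$, $(x^i,x^{T_i''})\in P''(M_i)$ just as before. It then remains to verify $(x^0,x^{T_1'},x^{T_1''},\dots)\in R_{T_1,\dots,T_k}(M_0)$; since $P_{T_1,\dots,T_k}(M_0)\subseteq R_{T_1,\dots,T_k}(M_0)$, it suffices to see that the relevant point is a (convex combination of) generator(s) of $P_{T_1,\dots,T_k}(M_0)$, i.e.\ that for every choice $J_i^*\in\{J_i',J_i''\}$ the union $J_0\cup J_1^*\cup\cdots\cup J_k^*$ is independent in $M_0$ — this is the content of case (i) versus (ii) in Lemma~\ref{lem:3sumbases} applied simultaneously, and the ``$J_i'=\{\alpha_i,\beta_i\}$, $J_i''=\{\beta_i,\gamma_i\}$ or $J_i'=J_i''$'' alternatives were designed precisely so the corresponding choices of $B_0$-extension remain bases of $M_0$.

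For the reverse inclusion $Q(M)\subseteq P(M)$, I would use the characterization $P(M)=\{x\in\R^{E(M)}_+ : x(F)\le\rk(F)\text{ for all connected flats }F\}$. Fix $x=(x^0,x^1,\dots,x^k)\in Q(M)$ and a connected flat $F$ of $M$. Applying Lemma~\ref{lem:3sumflats} iteratively across the $k$ $3$-sums, $F$ decomposes as $F=F_0\,\Delta\,\Delta_{i\in S} F_i$ for some $S\subseteq[k]$, where $F_0$ is a connected flat of $M_0$, each $F_i$ ($i\in S$) is a connected flat of $M_i$, $F_i\cap T_i=F_0\cap T_i$ is either a singleton or all of $T_i$, $F\cap T_i=\emptyset$ for $i\notin S$, and the rank satisfies $\rk(F)=\rk(F_0)+\sum_{i\in S}(\rk(F_i)-|F_0\cap T_i|)$. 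For each $i\in S$ I choose the witness among $x^{T_i'},x^{T_i''}$ as in the proof of Proposition~\ref{prop:asymmetric}: the primed one unless $F_0\cap T_i=\{\gamma_i\}$, in which case the double-primed one; write $\theta_i$ for that chosen vector restricted to $F_0\cap T_i$, summed. The single-triangle analysis in Proposition~\ref{prop:asymmetric} (cases (ii) and (iii)) gives, for each $i\in S$, the local bound $x^i(F\cap E(M_i))\le \rk(F_i)-|F_0\cap T_i|+\theta_i$. Meanwhile, choosing in $Q_{T_1,\dots,T_k}(M_0)\supseteq R_{T_1,\dots,T_k}(M_0)$ the specific selection $T_i^*$ matching the chosen primed/double-primed witnesses yields $(x^0, \dots)\in P(M_0)$ with that selection, hence $x^0(F_0)+\sum_{i\in S}\theta_i = x^0(F_0\cap E_0)+\sum_{i\in S}(\text{value on }F_0\cap T_i)\le\rk(F_0)$ applied to the connected flat $F_0$ of $M_0$. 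Summing the $M_0$-inequality with the $|S|$ local inequalities, the $\theta_i$ terms cancel and we get $x^0(F\cap E_0)+\sum_{i\in S}x^i(F\cap E(M_i))\le\rk(F_0)+\sum_{i\in S}(\rk(F_i)-|F_0\cap T_i|)=\rk(F)$, which is the desired inequality $x(F)\le\rk(F)$ (the elements of $\bigcup_{i\notin S}T_i$ and of triangles only partly in $F$ contribute $0$ on the relevant side by construction of the flat decomposition). Together with $x\ge\mathbf 0$, this proves $x\in P(M)$.

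The main obstacle I anticipate is bookkeeping rather than a genuinely new idea: one must set up the iterated versions of Lemma~\ref{lem:3sumbases} and Lemma~\ref{lem:3sumflats} carefully, being precise about which element of $T_i$ lies in $F_0$, which triangle elements are deleted in each $3$-sum, and making sure that the ``for all selections $J_i^*$'' quantifier in $P_{T_1,\dots,T_k}(M_0)$ (and the dual ``for all selections $T_i^*$'' in $Q_{T_1,\dots,T_k}(M_0)$) is exactly what is needed — no more, no less — so that the single-triangle proof glues without interaction terms between distinct triangles. The disjointness of the $T_i$ and of the $M_i$ is what makes the local contributions additive and keeps the argument from blowing up; I would emphasize that point explicitly. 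The constant $16$ and the precise count of inequalities per $M_i$ carry over verbatim from Proposition~\ref{prop:asymmetric}, so no recomputation is needed there.
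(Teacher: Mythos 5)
Your proposal is correct in substance but organizes the argument differently from the paper. The paper proves Proposition~\ref{prop:asymmetricstar} by induction on $k$: it writes $M=M'\oplus_3 M_k$ with $M':=M_0\oplus_3\cdots\oplus_3 M_{k-1}$, defines $R(M')$ as the projection of the star formulation onto the variables $(x^0,\dots,x^{k-1},x^{T_k'},x^{T_k''})$, checks $P_{T_k}(M')\subseteq R(M')\subseteq Q_{T_k}(M')$ (the right-hand inclusion using the induction hypothesis $P(M')=Q(M')$), and then invokes Proposition~\ref{prop:asymmetricpair} once; this way it never has to state iterated versions of Lemmas~\ref{lem:3sumbases} and~\ref{lem:3sumflats}. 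You instead redo the proof of Proposition~\ref{prop:asymmetric} simultaneously for all $k$ triangles, which is also viable: your flat decomposition $F=F_0\,\Delta\,\Delta_{i\in S}F_i$ with $F_0\cap T_j=\emptyset$ for $j\notin S$ is exactly what iterating Lemma~\ref{lem:3sumflats} along $N_j:=M_0\oplus_3\cdots\oplus_3 M_j$ yields, and the ``choose the primed or double-primed witness per $i$, sum, and cancel the $\theta_i$'' step is sound because $Q_{T_1,\dots,T_k}(M_0)$ quantifies over all selections. What the paper's induction buys is precisely the bookkeeping you flag as the main obstacle; what your route buys is a self-contained argument that exposes why disjointness of the $T_i$ makes the contributions additive. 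Two small corrections to your write-up: in the whole-triangle case the rank correction in Lemma~\ref{lem:3sumflats}(iii) and in inequality~\eqref{eq:2ndpartbis} is $2$, not $|F_0\cap T_i|=3$, so the local bound is $x^i(F\cap E(M_i))\le\rk(F_i)-2+\theta_i$ and the rank formula is $\rk(F)=\rk(F_0)+\sum_{i\in S}(\rk(F_i)-c_i)$ with $c_i\in\{1,2\}$ (your two $-|F_0\cap T_i|$ slips cancel, but as literally quoted the local bound is not what Proposition~\ref{prop:asymmetric} provides); and in the forward inclusion the assertion that \emph{every} selection $J_0\cup J_1^*\cup\cdots\cup J_k^*$ stays independent in $M_0$ deserves the explicit one-line exchange argument (if $J_i'=\{\alpha_i,\beta_i\}$ and $J_i''=\{\beta_i,\gamma_i\}$, then since $T_i$ is a circuit the swap $\alpha_i\leftrightarrow\gamma_i$ preserves independence, and the swaps at distinct disjoint triangles can be performed independently) — the paper is equally terse on this point inside its induction, so this is a presentational rather than a substantive gap.
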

\begin{proof}
We will proceed by induction on $k$. Notice that the base case $k=1$ is Proposition~\ref{prop:asymmetricpair}. Let $k>1$, and let $M' := M_0\oplus_3\dots \oplus_3 M_{k-1}$, so that $M = M'\oplus_3 M_k$, with $T_k$ being the common triangle. Denote by $Q(M)$ the polytope in the right-hand side of~\eqref{eq:EFstar}. By induction, we have that $P(M')=Q(M')$, which we will use below. Let 
\begin{align*}
    R(M') := \Big\{ (x^0,x^1,\dots,x^{k-1}, x^{T_k'},x^{T_k''}) \mid & \ \exists \ x^{T_1'},x^{T_1''} \in \R^{T_1}, \dots, x^{T_{k-1}'}, x^{T_{k-1}''} \in \R^{T_{k-1}} : &\\
    & \quad (x^0,x^{T_1'},x^{T_1''}, \dots, x^{T_k'},x^{T_k''})\in R_{T_1,\dots,T_k}(M_0)&\\ 
    & \quad \forall \ i \in [k] : (x^i,x^{T_i'})\in P'(M_i),\ (x^i,x^{T_i''})\in P''(M_i)\Big\}.
\end{align*} 
We claim that $P_{T_k}(M')\subseteq R(M')\subseteq Q_{T_k}(M')$. Then, by Proposition~\ref{prop:asymmetricpair}, we have that 
\begin{align*}
P(M)=\{(x^0,x^1,\dots,x^k)\in \R^{E(M)} \mid \ \exists \ x^{T_k'}, x^{T_k''} \in \R^{T_k} :\:& (x^0,x^1,\dots,x^{k-1}, x^{T_k'},x^{T_k''})\in R(M') \\
& (x^k,x^{T_k'})\in P'(M_k),\ (x^k,x^{T_k''}) \in P''(M_k) \}.
\end{align*}
But the latter, by definition of $R(M')$, is exactly $Q(M)$, which concludes the proof. We prove the claim below. 

To show $P_{T_k}(M')\subseteq R(M')$, one proceeds as in the proof of Proposition~\ref{prop:asymmetric}, by showing that for every vertex of $P_{T_k}(M')$ there are choices for $x^{T_1'}, x^{T_1''}, \ldots, x^{T_{k-1}'},x^{T_{k-1}''}$ that satisfy all the constraints in $R(M')$. 

To show $R(M')\subseteq Q_{T_k}(M')$, it suffices to prove that whenever $(x^0,x^1,\dots,x^{k-1},x^{T_k'},x^{T_k''}) \in R(M')$, we have $(x^0,x^1,\dots,x^{k-1}, x^{T_k'}) \in P(M')$ and $(x^0,x^1,\dots,x^{k-1}, x^{T_k''}) \in P(M')$. Since $R_{T_1,\ldots,T_k}(M_0) \subseteq Q_{T_1,\ldots,T_k}(M_0)$, we have $(x^0,x^1,\dots,x^{k-1}, x^{T_k'}) \in Q(M')$ and $(x^0,x^1,\dots,x^{k-1},x^{T_k''}) \in Q(M')$. Using $P(M') = Q(M')$, this observation concludes the proof.
\end{proof}

\section{Smaller formulation for star decompositions: the graphic case} \label{sec:graphic}

In this section we first review Wong's extended formulation for the spanning tree polytope~\cite{wong1980integer}, which will be the basis for our extended formulation of the independence polytope of any regular matroid $M$ that has a star decomposition $M \stackrel{\star}{=} M_0 \oplus_3 M_1 \oplus_3 \cdots \oplus_3 M_k$. Then, we prove Lemma~\ref{lem:star_bound} in case $M_0$ is a graphic matroid. The case where $M_0$ is a cographic matroid will be addressed in the next section.

\subsection{Wong's extended formulation for the spanning tree polytope}\label{sec:stpwong}

Let $D$ be a directed graph, and $r$ any of its nodes, that we call the \emph{root}. An $r$-arborescence of $D$ is an inclusion-wise minimal subset of arcs of $D$ containing, for every node $v$ distinct from $r$ at least one directed path from $r$ to $v$. The \emph{$r$-arborescence polytope} $\arbp(D)$ is the convex hull of the characteristic vectors of $r$-arborescences of $D$. It is well known~\cite[Corollary 52.3a]{schrijver2003combinatorial} that one can express its dominant as
$$
\arbdom(D) = \{c \in \R^{A(D)}_+ \mid \forall S \subsetneq V(D),\ r \in S : c(\delta^{\mathrm{out}}(S)) \geq 1\}
$$
and that $\arbp(D)$ is the face of $\arbdom(D)$ defined by the single valid inequality $c(A(D)) \geq |V(D)| - 1$ (or equivalently, by the valid inequalities $c(\delta^{\mathrm{in}}(v)) \geq 1$ for $v \in V(D) - r$ and $c_a \geq 0$ for $a \in \delta^{\mathrm{in}}(r)$).

The $r$-arborescence dominant has a canonical flow-based compact extended formulation in which every non-root node $v$ has unit flow $\phi^v \in \R^{A(D)}$ from $r$ to $v$, and the variables $c_a$ of the $r$-arborescence dominant act as capacities:
\begin{align}
    \nonumber \arbdom(D) = \Big\{c\in \R^{A(D)} \mid\:& \forall \ v \in V(D)-r : \ \exists \ \phi^v \in \R^{A(D)} :\\
    &\quad \phi^v(\delta^{\mathrm{out}}(r)) - \phi^v(\delta^{\mathrm{in}}(r)) = 1, \label{constr:wongunit}\\
    &\quad \forall \ u \in V(D) - r - v :
    \phi^v(\delta^{\mathrm{out}}(u)) - \phi^v(\delta^{\mathrm{in}}(u)) = 0, \label{constr:wongflowu}\\
    &\quad \mathbf{0} \leq \phi^v \leq c \label{constr:wongcapacity} \Big\}
\end{align}

Let $G$ be a connected graph. Wong's formulation for the spanning tree polytope $\stp(G)$ can be obtained by bidirecting each edge of $G$ to get a directed graph $D$, picking an arbitrary root $r \in V(D)$, and then regarding spanning trees as ``undirected $r$-arborescences''. Formally, 
$$
\stp(G) = \{x \in \R^{E(G)} \mid \exists \ c \in \arbp(D) : \forall \ uv \in E(G) : x_{uv} = c_{(u,v)} + c_{(v,u)}\}\,.
$$
The independence polytope of $M(G)$ can then be expressed as follows:
\begin{align*}
P(M(G)) &= \{x \in \R^{E(G)} \mid \exists \ y \in \stp(G) : 
\mathbf{0} \leq x \leq y\}\\
&= \{x \in \R^{E(G)} \mid \exists \ c \in \arbp(D) : \forall \ uv \in E(G) : 0 \leq x_{uv} \leq c_{(u,v)} + c_{(v,u)}\}\,.
\end{align*}

\subsection{Tweaking Wong's formulation} \label{sec:tweaking_wong}

Assume that $M \stackrel{\star}{=} M_0 \oplus_3 M_1 \oplus_3 \cdots \oplus_3 M_k$ is a 3-connected regular matroid with $M_0$ graphic. Let $G$ be a graph such that $M_0 = M(G)$. One can see (see Section \ref{sec:appendixdecomp}) that $M_0$ is connected, implying that $G$ is connected (actually, even $2$-connected). Let $T_i$ denote the common triangle of $M_0$ and $M_i$ for $i \in [k]$. Hence, $T_1$, \ldots, $T_k$ are (the edge sets of) $k$~edge-disjoint triangles ($3$-cliques) in graph $G$.  

Using as a basis Wong's formulation for $\stp(G)$, we construct an extended formulation for the pair $(P_{T_1,\dots,T_k}(M_0), Q_{T_1,\dots,T_k}(M_0))$ whose size is $O(|V(G)| \cdot |E(G)|)$. That is, we define a polytope $R_{T_1,\dots,T_k}(M_0)$ containing $P_{T_1,\dots,T_k}(M_0)$ and contained in $Q_{T_1,\dots,T_k}(M_0)$ by giving a size-$O(|V(G)| \cdot |E(G)|)$ extended formulation for it.

As before, we partition the edges of $G$ into $T_1$, \ldots, $T_k$ and $E_0 := E(G) \setminus (T_1 \cup \cdots \cup T_k)$. Again, let $D$ denote the directed graph obtained by bidirecting each edge of $G$, and let $r \in V(D)$ be an arbitrary root. For $i \in [k]$, let $B_i := \{(u,v) \mid uv \in T_i\}$ denote the (arc set of the) bidirected triangle obtained from $T_i$. We partition the arcs of $D$ into $B_1$, \ldots, $B_k$ and $A_0 := A(D) \setminus (B_1 \cup \cdots \cup B_k)$.

In addition to the variables $x^0 \in \R^{E_0}$, $x^{T'_1},  x^{T''_1} \in \R^{T_1}$, \ldots, $x^{T'_k}, x^{T''_k} \in \R^{T_k}$, our formulation has 
\begin{itemize}
\item arc capacities $c^0 \in \R^{A_0}$, $c^{T'_1}, c^{T''_1} \in \R^{B_1}$, \ldots, $c^{T'_k}, c^{T''_k} \in \R^{B_k}$,
\item a unit flow $\phi^v \in \R^{A(D)}$ from $r$ to $v$ for each $v \in V(D) - r$,
\item a circulation $\Delta^v_i \in \R^{B_i}$ for each $v \in V(D) - r$ and $i \in [k]$.
\end{itemize}

For each $I \subseteq [k]$ and non-root node $v$, we obtain a unit flow $\phi^v_I \in \R^{A(D)}$ from $r$ to $v$ by adding to $\phi^v$ the circulation $\Delta^v_i$ on the arcs of each $B_i$ with $i \in I$. That is, we let
$$
\phi^v_{I,a} := 
\begin{cases}
\phi^v_a &\text{if } a \in A_0 \text{ or } a \in B_i,\ i \notin I\,,\\
\phi^v_a + \Delta^v_{i,a} &\text{if } a \in B_i,\ i \in I\,.
\end{cases}
$$
The $2^k$ flows $\phi^v_I$ are not explicitly part of the formulation. Instead, they are implicitly defined from the flows $\phi^v$ and the circulations $\Delta^v_i$. The idea is that each circulation $\Delta^v_i$ describes how the flow $\phi^v$ is to be rerouted within the $i$th triangle. 

Now, we give a formal definition of our extended formulation: $R_{T_1,\ldots,T_k}(M_0)$ is the set of tuples $(x^0,x^{T_1'},x^{T_1''},\ldots,x^{T_k'},x^{T_k''})$ such that there exists capacities $(c^0,c^{T_1'},c^{T_1''},\ldots,c^{T_k'},c^{T_k''})$ satisfying the following constraints. First, the $x$-variables and the capacities are related similarly as in the extended formulation for the spanning forest polytope:
\begin{align}
\label{constr:x-UB1} &\forall \ uv \in E_0 : 0 \leq x^0_{uv} \leq c^0_{(u,v)} + c^0_{(v,u)},\\
\label{constr:x-UB2} &\forall \ i \in [k],\ uv \in T_i : 0 \leq x^{T'_i}_{uv} \leq c^{T'_i}_{(u,v)} + c^{T'_i}_{(v,u)},\ 0 \leq x^{T''_i}_{uv} \leq c^{T''_i}_{(u,v)} + c^{T''_i}_{(v,u)}\,.
\end{align}
Second, we include constraints that force $(c^0,c^{T^*_1},\ldots,c^{T^*_k})$ to be in the $r$-arborescence polytope for every choice of $T^*_i \in \{T'_i,T''_i\}$, $i \in [k]$:
\begin{align}
\label{constr:c-arb_eq}
&c^0(A_0) + \sum_{i=1}^k c^{T'_i}(B_i) = |V(D)|-1\,,\\
\label{constr:c-consistency} 
&\forall \ i \in [k] : c^{T'_i}(B_i) = c^{T''_i}(B_i)\,.
\end{align}
Third, for all $v \in V(D) - r$ there exists $\phi^v \in \R^{A(D)}$, $\Delta^v_1 \in \R^{B_1}$, \ldots, $\Delta^v_k \in \R^{B_k}$ such that $\phi^v$ is a unit flow from $r$ to $v$, see \eqref{constr:wongunit} and \eqref{constr:wongflowu} above, and $\Delta^v_i$ is a circulation for all $i \in k$:
\begin{align}
&\forall \ u \in V(D) : \Delta^v_i(\delta^{out}(u) \cap B_i) - \Delta^v_i(\delta^{in}(u) \cap B_i) = 0\,.
\end{align}
Fourth, the flows should satisfy the following lower and upper bounds:
\begin{align}
\label{constr:wongflowbound1}&\forall \ a \in A_0 : 0 \leq \phi^v_a \leq c^0_a\,,\\
\label{constr:wongflowbound2}&\forall \ i \in [k],\ a \in B_i : 0 \leq \phi^v_a \leq c^{T'_i}_a,\ 0 \leq \phi^v_a + \Delta^v_{i,a} \leq c^{T''_i}_a\,.
\end{align}

The resulting formulation has in total $|E_0|+6k$ $x$-variables, $2|E_0|+12k$ $c$-variables, $(|V(G)|-1) \cdot 2|E(G)|$ $\phi$-variables and $(|V(G)|-1)\cdot 6k$ $\Delta$-variables. Given that $|E(G)| = |E_0| + 3k$, the total number of variables is $O(|V(G)| \cdot |E(G)|)$. Since each variable is involved in a constant number of inequalities, the total number of inequalities is also $O(|V(G)| \cdot |E(G)|)$. 

\begin{proposition}\label{prop:wongtweak}
Let $G$ be a connected graph with $k$ edge-disjoint triangles $T_1$, \ldots, $T_k$, and let $M_0 := M(G)$. Letting $R_{T_1,\ldots,T_k}(M_0)$ be defined as above, we have $P_{T_1,\dots,T_k}(M_0) \subseteq R_{T_1,\ldots,T_k}(M_0) \subseteq Q_{T_1,\dots,T_k}(M_0)$.
It follows that $\xc(P_{T_1,\dots,T_k}(M_0),Q_{T_1,\dots,T_k}(M_0)) = O(|V(G)| \cdot |E(G)|)$.
\end{proposition}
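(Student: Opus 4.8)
The plan is to establish the two inclusions $P_{T_1,\dots,T_k}(M_0) \subseteq R_{T_1,\dots,T_k}(M_0)$ and $R_{T_1,\dots,T_k}(M_0) \subseteq Q_{T_1,\dots,T_k}(M_0)$ separately; the size bound $\xc(P_{T_1,\dots,T_k}(M_0),Q_{T_1,\dots,T_k}(M_0)) = O(|V(G)|\cdot|E(G)|)$ then follows immediately from the variable/constraint count computed just before the proposition statement.

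For the inclusion $R_{T_1,\dots,T_k}(M_0) \subseteq Q_{T_1,\dots,T_k}(M_0)$, I would fix a tuple $(x^0,x^{T_1'},x^{T_1''},\dots,x^{T_k'},x^{T_k''})$ in $R_{T_1,\dots,T_k}(M_0)$ together with its certifying capacities $c^0, c^{T_i'}, c^{T_i''}$, flows $\phi^v$ and circulations $\Delta^v_i$. Fix an arbitrary choice $T_i^* \in \{T_i', T_i''\}$ for each $i \in [k]$, and let $I := \{i : T_i^* = T_i''\}$. The key observation is that $\phi^v_I$ (as defined in the text, adding $\Delta^v_i$ on $B_i$ for $i \in I$) is again a unit flow from $r$ to $v$: adding a circulation supported on $B_i$ preserves the flow-conservation balance at every node, since $B_i$ is the bidirected triangle and $\Delta^v_i$ has zero net flow at each node. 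The bounds \eqref{constr:wongflowbound1}--\eqref{constr:wongflowbound2} give $0 \le \phi^v_{I,a} \le c^{T_i^*}_a$ on each $B_i$ and $0 \le \phi^v_{I,a} \le c^0_a$ on $A_0$; together with the arborescence equality \eqref{constr:c-arb_eq} and the consistency equalities \eqref{constr:c-consistency} (which ensure $c^0(A_0) + \sum_i c^{T_i^*}(B_i) = |V(D)| - 1$ for the chosen $T_i^*$), the flow-based formulation for $\arbdom(D)$ recalled in \eqref{constr:wongunit}--\eqref{constr:wongcapacity} certifies that the concatenated capacity vector lies in $\arbp(D)$. Combining with \eqref{constr:x-UB1}--\eqref{constr:x-UB2} and the expression of $P(M(G))$ via $\arbp(D)$, we get $(x^0, x^{T_1^*}, \dots, x^{T_k^*}) \in P(M_0)$. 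Since this holds for every choice of the $T_i^*$, the tuple lies in $Q_{T_1,\dots,T_k}(M_0)$ by definition.

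For the inclusion $P_{T_1,\dots,T_k}(M_0) \subseteq R_{T_1,\dots,T_k}(M_0)$, it suffices to check that every vertex of $P_{T_1,\dots,T_k}(M_0)$ lies in $R_{T_1,\dots,T_k}(M_0)$, since $R$ is a polytope (i.e.\ convex). A vertex corresponds to an independent set configuration: sets $J_0 \subseteq E_0$ and, for each $i$, either $J_i' = J_i''$ a common subset of $T_i$, or $\{J_i', J_i''\} = \{\{\alpha_i,\beta_i\},\{\beta_i,\gamma_i\}\}$, with the property that $J_0 \cup J_1^* \cup \dots \cup J_k^*$ is independent in $M_0 = M(G)$ for every selection $J_i^* \in \{J_i', J_i''\}$, i.e.\ it is a forest in $G$. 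The task is to produce capacities, a base flow $\phi^v$ for each $v$, and circulations $\Delta^v_i$ witnessing membership. The natural construction: extend the forest $J_0 \cup J_1' \cup \dots \cup J_k'$ to a spanning tree $S'$ of $G$ and orient it as an $r$-arborescence, giving capacities $c^0$ on $A_0$ and $c^{T_i'}$ on $B_i$; to handle $c^{T_i''}$, when $J_i' \ne J_i''$ one reroutes the single tree-edge of $T_i$ to the other available edge (the swap $\{\alpha_i,\beta_i\} \leftrightarrow \{\beta_i,\gamma_i\}$), which keeps a spanning tree because $T_i$ is a triangle, and defines $\Delta^v_i$ as the corresponding unit rerouting of the flow $\phi^v$ around the triangle. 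One then verifies \eqref{constr:x-UB1}--\eqref{constr:wongflowbound2} hold for the integral $x$-vector $\chi^{J_0}, \chi^{J_i'}, \chi^{J_i''}$. The main obstacle is exactly this direction: one must choose the spanning tree extension and, crucially, the set of flows $\{\phi^v\}_{v}$ and rerouting circulations $\{\Delta^v_i\}$ \emph{simultaneously} and \emph{consistently} so that a single common base flow $\phi^v$ works for all $2^k$ selections of triangle edges at once — the flow $\phi^v$ must route to $v$ inside $S'$, and adding any subset of the $\Delta^v_i$'s must stay within the capacities $c^{T_i''}$; checking that the local triangle reroutings do not interfere with each other (they don't, since the triangles are edge-disjoint and each $\Delta^v_i$ is supported on $B_i$ only) and that the resulting flows respect all capacity bounds is where the real work lies. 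I would also double-check the boundary cases where some $J_i^*$ is empty, a singleton, or the full triangle — in the full-triangle case $T_i$ cannot be contained in a forest, so that configuration is automatically excluded by the definition of $P_{T_1,\dots,T_k}(M_0)$, which simplifies matters.
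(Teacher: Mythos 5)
Your proposal is correct and follows essentially the same route as the paper: the same witnesses for $R_{T_1,\dots,T_k}(M_0) \subseteq Q_{T_1,\dots,T_k}(M_0)$ (the flows $\phi^v_I$ obtained by adding the circulations, which preserve conservation and fit the swapped capacities thanks to the consistency and arborescence equalities), and for $P_{T_1,\dots,T_k}(M_0) \subseteq R_{T_1,\dots,T_k}(M_0)$ the same construction of capacities, flows and circulations from a spanning tree with a per-triangle edge swap, with the size bound read off the variable/constraint count. The only differences are cosmetic: the paper reduces to spanning-tree configurations via the anti-blocking property of $R_{T_1,\dots,T_k}(M_0)$ whereas you extend a forest configuration to a spanning tree directly, and the rerouting case analysis you defer (``where the real work lies'') is exactly what the paper settles case by case in Figure~\ref{fig:rerouting}.
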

\begin{proof}
The inclusion $R_{T_1,\ldots,T_k}(M_0) \subseteq Q_{T_1,\dots,T_k}(M_0)$ follows easily from our construction. Fix any $I \subseteq [k]$, and let $T^*_i := T'_i$ if $i \notin I$ and $T^*_i := T''_i$ if $i \in I$. We see that $(x^0,x^{T^*_1},\ldots,x^{T^*_k})$ is a convex combination of spanning forests since there are capacities $(c^0,c^{T^*_1},\ldots,c^{T^*_k})$ and unit flows $\phi^v_I$ for each $v \in V(D) - r$ that witness this.

In order to prove the inclusion $P_{T_1,\ldots,T_k}(M_0) \subseteq R_{T_1,\dots,T_k}(M_0)$, we only need to focus on the case of spanning trees, since $R_{T_1,\ldots,T_k}(M_0)$ is by definition of anti-blocking type. More precisely, let $J_0 \subseteq E_0$, $J_1', J_1'' \subseteq T_1$, \ldots, $J_k', J_k'' \subseteq T_k$ be such that $J_0 \cup J_1^* \cup \cdots \cup J_k^*$ is a spanning tree of $G$ for all choices of $J_i^* \in \{J_i',J_i''\}$, $i \in [k]$ and in addition $J_i'=J_i''$ or $J_i'=\{\alpha_i,\beta_i\}$ and $J_i''=\{\beta_i,\gamma_i\}$ for all $i \in [k]$. (As before, $\alpha_i$, $\beta_i$ and $\gamma_i$ denote the edges of $T_i$.) We claim that the 0/1-vector $(\chi^{J_0},\chi^{J_1'}, \chi^{J_1''},\dots, \chi^{J_k'}, \chi^{J_k''})$ belongs to $R_{T_1,\dots,T_k}(M_0)$.

We define capacities $(c^0,c^{T'_1},\ldots,c^{T'_k})$ and unit flows $\phi^v$ for $v \in V(D) - r$ from the spanning tree $(J_0,J'_1,J'_2,\ldots,J'_k)$ exactly as in Wong's formulation. For all indices $i \in [k]$ such that $J'_i = J''_i$, we let $c^{T''_i} := c^{T'_i}$ and $\Delta^v_i := \mathbf{0}$ for all $v \in V(D) - r$.

Now consider an index $i \in [k]$ such that $J'_i = \{\alpha_i,\beta_i\}$ and $J''_i = \{\beta_i,\gamma_i\}$. We explain how to define the capacities $c^{T''_i}$ and the ``alternative'' flow values $\phi^v_a + \Delta^v_{i,a}$ for $a \in B_i$ in Figure~\ref{fig:rerouting}. (In case $\phi^v$ is zero on $T_i$, we let $\Delta^v_i := \mathbf{0}$.)

We leave to the reader to check that all constraints defining $R_{T_1,\ldots,T_k}$ are satisfied by our choice of capacities $c^0$ and $c^{T'_i}, c^{T''_i}$ ($i \in [k]$), flows $\phi^v$ ($v \in V(D)-r$) and circulations $\Delta^v_i$ ($v \in V(D)-r$, $i \in [k]$). This establishes the claim, and concludes the proof.
\end{proof}

\begin{figure}
    \centering
    \begin{tabular}{c|c|c|c}
    Capacities &Alt.\ capacities &Flow &Alt.\ flow\\
    \hline
    %
    %
    \begin{tikzpicture}[scale=.35]
        \tikzstyle{vtx}=[circle,draw,thick,inner sep=2.5pt,fill=white]
        \node[vtx] (a) at (90:2)  {};
        \node[vtx] (b) at (210:2) {};
        \node[vtx] (c) at (330:2) {};
        \draw[thick,->,>=latex] (a) -- (b);
        \draw[thick,->,>=latex] (a) -- (c);
    \end{tikzpicture}
    &
    \begin{tikzpicture}[scale=.35]
        \tikzstyle{vtx}=[circle,draw,thick,inner sep=2.5pt,fill=white]
        \node[vtx] (a) at (90:2)  {};
        \node[vtx] (b) at (210:2) {};
        \node[vtx] (c) at (330:2) {};
        \draw[thick,->,>=latex] (a) -- (c);
        \draw[thick,->,>=latex] (c) -- (b);
    \end{tikzpicture}
    &
    \begin{tikzpicture}[scale=.35]
        \tikzstyle{vtx}=[circle,draw,thick,inner sep=2.5pt,fill=white]
        \node[vtx] (a) at (90:2)  {};
        \node[vtx] (b) at (210:2) {};
        \node[vtx] (c) at (330:2) {};
        \draw[thick,->,>=latex] (a) -- (b);
    \end{tikzpicture}
    &
    \begin{tikzpicture}[scale=.35]
        \tikzstyle{vtx}=[circle,draw,thick,inner sep=2.5pt,fill=white]
        \node[vtx] (a) at (90:2)  {};
        \node[vtx] (b) at (210:2) {};
        \node[vtx] (c) at (330:2) {};
        \draw[thick,->,>=latex] (a) -- (c);
        \draw[thick,->,>=latex] (c) -- (b);
    \end{tikzpicture}\\
    \hline
    &&
    \begin{tikzpicture}[scale=.35]
        \tikzstyle{vtx}=[circle,draw,thick,inner sep=2.5pt,fill=white]
        \node[vtx] (a) at (90:2)  {};
        \node[vtx] (b) at (210:2) {};
        \node[vtx] (c) at (330:2) {};
        \draw[thick,->,>=latex] (a) -- (c);
    \end{tikzpicture}
    &
    \begin{tikzpicture}[scale=.35]
        \tikzstyle{vtx}=[circle,draw,thick,inner sep=2.5pt,fill=white]
        \node[vtx] (a) at (90:2)  {};
        \node[vtx] (b) at (210:2) {};
        \node[vtx] (c) at (330:2) {};
        \draw[thick,->,>=latex] (a) -- (c);
    \end{tikzpicture}\\
    %
    %
    \hline
    \begin{tikzpicture}[scale=.35]
        \tikzstyle{vtx}=[circle,draw,thick,inner sep=2.5pt,fill=white]
        \node[vtx] (a) at (90:2)  {};
        \node[vtx] (b) at (210:2) {};
        \node[vtx] (c) at (330:2) {};
        \draw[thick,->,>=latex] (b) -- (a);
        \draw[thick,->,>=latex] (a) -- (c);
    \end{tikzpicture}
    &
    \begin{tikzpicture}[scale=.35]
        \tikzstyle{vtx}=[circle,draw,thick,inner sep=2.5pt,fill=white]
        \node[vtx] (a) at (90:2)  {};
        \node[vtx] (b) at (210:2) {};
        \node[vtx] (c) at (330:2) {};
        \draw[thick,->,>=latex] (b) -- (c);
        \draw[thick,->,>=latex] (c) -- (a);
    \end{tikzpicture}
    &
    \begin{tikzpicture}[scale=.35]
        \tikzstyle{vtx}=[circle,draw,thick,inner sep=2.5pt,fill=white]
        \node[vtx] (a) at (90:2)  {};
        \node[vtx] (b) at (210:2) {};
        \node[vtx] (c) at (330:2) {};
        \draw[thick,->,>=latex] (b) -- (a);
    \end{tikzpicture}
    &
    \begin{tikzpicture}[scale=.35]
        \tikzstyle{vtx}=[circle,draw,thick,inner sep=2.5pt,fill=white]
        \node[vtx] (a) at (90:2)  {};
        \node[vtx] (b) at (210:2) {};
        \node[vtx] (c) at (330:2) {};
        \draw[thick,->,>=latex] (b) -- (c);
        \draw[thick,->,>=latex] (c) -- (a);
    \end{tikzpicture}\\
    \hline
    &&
    \begin{tikzpicture}[scale=.35]
        \tikzstyle{vtx}=[circle,draw,thick,inner sep=2.5pt,fill=white]
        \node[vtx] (a) at (90:2)  {};
        \node[vtx] (b) at (210:2) {};
        \node[vtx] (c) at (330:2) {};
        \draw[thick,->,>=latex] (b) -- (a);
        \draw[thick,->,>=latex] (a) -- (c);
    \end{tikzpicture}
    &
    \begin{tikzpicture}[scale=.35]
        \tikzstyle{vtx}=[circle,draw,thick,inner sep=2.5pt,fill=white]
        \node[vtx] (a) at (90:2)  {};
        \node[vtx] (b) at (210:2) {};
        \node[vtx] (c) at (330:2) {};
        \draw[thick,->,>=latex] (b) -- (c);
    \end{tikzpicture}\\
    \hline
     %
    %
    \begin{tikzpicture}[scale=.35]
        \tikzstyle{vtx}=[circle,draw,thick,inner sep=2.5pt,fill=white]
        \node[vtx] (a) at (90:2)  {};
        \node[vtx] (b) at (210:2) {};
        \node[vtx] (c) at (330:2) {};
        \draw[thick,->,>=latex] (c) -- (a);
        \draw[thick,->,>=latex] (a) -- (b);
    \end{tikzpicture}
    &
    \begin{tikzpicture}[scale=.35]
        \tikzstyle{vtx}=[circle,draw,thick,inner sep=2.5pt,fill=white]
        \node[vtx] (a) at (90:2)  {};
        \node[vtx] (b) at (210:2) {};
        \node[vtx] (c) at (330:2) {};
        \draw[thick,->,>=latex] (c) -- (b);
        \draw[thick,->,>=latex] (c) -- (a);
    \end{tikzpicture}
    &
    \begin{tikzpicture}[scale=.35]
        \tikzstyle{vtx}=[circle,draw,thick,inner sep=2.5pt,fill=white]
        \node[vtx] (a) at (90:2)  {};
        \node[vtx] (b) at (210:2) {};
        \node[vtx] (c) at (330:2) {};
        \draw[thick,->,>=latex] (c) -- (a);
    \end{tikzpicture}
    &
    \begin{tikzpicture}[scale=.35]
        \tikzstyle{vtx}=[circle,draw,thick,inner sep=2.5pt,fill=white]
        \node[vtx] (a) at (90:2)  {};
        \node[vtx] (b) at (210:2) {};
        \node[vtx] (c) at (330:2) {};
        \draw[thick,->,>=latex] (c) -- (a);
    \end{tikzpicture}\\
    \hline
    &&
    \begin{tikzpicture}[scale=.35]
        \tikzstyle{vtx}=[circle,draw,thick,inner sep=2.5pt,fill=white]
        \node[vtx] (a) at (90:2)  {};
        \node[vtx] (b) at (210:2) {};
        \node[vtx] (c) at (330:2) {};
        \draw[thick,->,>=latex] (c) -- (a);
        \draw[thick,->,>=latex] (a) -- (b);
    \end{tikzpicture}
    &
    \begin{tikzpicture}[scale=.35]
        \tikzstyle{vtx}=[circle,draw,thick,inner sep=2.5pt,fill=white]
        \node[vtx] (a) at (90:2)  {};
        \node[vtx] (b) at (210:2) {};
        \node[vtx] (c) at (330:2) {};
        \draw[thick,->,>=latex] (c) -- (b);
    \end{tikzpicture}\\
    \end{tabular}
    \caption{Definition of $c^{T''_i}$ (second column) and $\phi^v + \Delta^v_i$ (fourth column), in each case.}
    \label{fig:rerouting}
\end{figure}
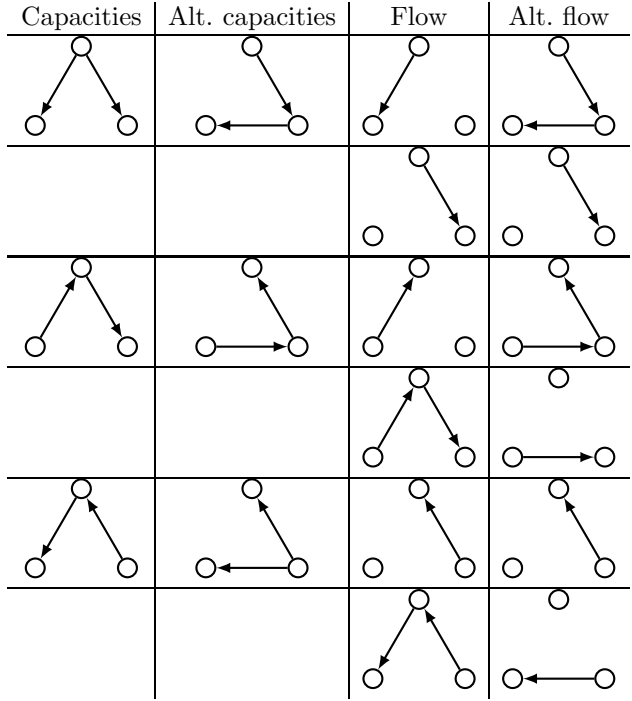

\section{Smaller formulation for star decompositions: the cographic case} \label{sec:cographic}

In this section, we consider the case where $M_0$ is cographic, i.e.\ $M_0=M^*(G)$ for a ($2$-)connected graph $G$. Our goal is to prove Lemma~\ref{lem:star_bound} in this case. As in the previous section, we rely on Proposition~\ref{prop:asymmetricstar}.

By duality, we have that $x \in B(M_0)$ if and only if $\textbf{1}-x \in B(M_0^*)$. Hence, we will again deal with the spanning tree polytope of $G$. If $T$ is a triangle of cographic matroid $M_0$, then the corresponding edges of $G$ form a cut of size three. Let $T_1,\dots, T_k$ be the pairwise disjoint triangles of $M_0$ involved in the $3$-sums with $M_1,\dots,M_k$ respectively, where $T_i = \{\alpha_i,\beta_i,\gamma_i\}$ for $i \in [k]$, as previously. We can assume (see the appendix, and specifically Proposition \ref{prop:claws}) that each $T_i$ is of the form $\delta(v_i)$ for some degree-$3$ node $v_i \in V(G)$. We denote by $a_i$, $b_i$, and $c_i$ the neighbors of $v_i$. We may assume that $\alpha_i = a_iv_i$, $\beta_i = b_iv_i$ and $\gamma_i = c_iv_i$.

Let $V^* := \{v_1,\dots,v_k\}$. Observe that $V^*$ is a stable set of $G$. Let $D$ be the directed graph obtained from $G$ by bidirecting each edge. Let $r$ be any node in $V(D) \setminus V^*$. Such a node exists since we can take $r := a_1$ for instance.

As a first step, we simplify Wong's formulation for the $r$-arborescence polytope of $D$. As stated in the next lemma, it is sufficient to have a unit flow $\phi^v$ for each $v \in V(G) \setminus (V^* \cup \{r\})$, and impose a single specific constraint on the arcs entering $v_i$ for each $i \in [k]$. 

\begin{lemma}\label{lem:wongmodcographic}
  Let $D$ be a directed graph with specified distinct node $r, v_1, \dots, v_k$ (for some $k\geq 1$) such that $\delta^{\mathrm{out}}(v_i) \cap \delta^{\mathrm{in}}(v_j) = \emptyset$ for every $i, j \in [k]$. Letting $V^* := \{v_1,\ldots,v_k\}$, we have
  \begin{align*}
    \nonumber \arbdom(D) = \Big\{c\in \R^{A(D)} \mid\:&
    \forall i \in [k] : c(\delta^{\mathrm{in}}(v_i)) \geq 1,\\
    &\forall v \in V(D) \setminus (V^* \cup \{r\}) : \exists \phi^v \in \R^{A(D)} : \text{\eqref{constr:wongunit}--\eqref{constr:wongcapacity}} \Big\}\,.
\end{align*}
\end{lemma}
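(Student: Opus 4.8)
The plan is to show the two sets are equal by a double inclusion. The inclusion $(\subseteq)$ is immediate: in Wong's formulation \eqref{constr:wongunit}--\eqref{constr:wongcapacity}, a unit flow $\phi^{v_i}$ from $r$ to $v_i$ together with the capacity bounds $\mathbf{0}\le\phi^{v_i}\le c$ forces $c(\delta^{\mathrm{in}}(v_i))\ge \phi^{v_i}(\delta^{\mathrm{in}}(v_i))\ge \phi^{v_i}(\delta^{\mathrm{in}}(v_i))-\phi^{v_i}(\delta^{\mathrm{out}}(v_i))=1$, where the last equality is the flow conservation/excess condition at the sink $v_i$ (obtained by summing \eqref{constr:wongunit} and all equations \eqref{constr:wongflowu}, or directly since $v_i\ne r$). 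So dropping the flows $\phi^{v_i}$ for $i\in[k]$ and replacing them by the single scalar inequality $c(\delta^{\mathrm{in}}(v_i))\ge 1$ only enlarges the set; hence $\arbdom(D)$ is contained in the right-hand side.

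For the reverse inclusion $(\supseteq)$, suppose $c\in\R_+^{A(D)}$ satisfies $c(\delta^{\mathrm{in}}(v_i))\ge 1$ for all $i$ and admits unit flows $\phi^v$ for all $v\in V(D)\setminus(V^*\cup\{r\})$. By the characterization $\arbdom(D)=\{c\ge\mathbf 0 \mid \forall S\subsetneq V(D),\ r\in S:\ c(\delta^{\mathrm{out}}(S))\ge 1\}$ recalled in Section~\ref{sec:stpwong}, it suffices to verify $c(\delta^{\mathrm{out}}(S))\ge 1$ for every $S$ with $r\in S\subsetneq V(D)$. Fix such an $S$ and pick any $w\notin S$. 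If $w\notin V^*$, then the unit flow $\phi^w$ from $r\in S$ to $w\notin S$ must cross the cut, so by the max-flow/flow-value argument $c(\delta^{\mathrm{out}}(S))\ge \phi^w(\delta^{\mathrm{out}}(S))-\phi^w(\delta^{\mathrm{in}}(S))=1$ (using $\mathbf 0\le\phi^w\le c$ and that the net flow of $\phi^w$ across $S$ equals $1$ since $r\in S$, $w\notin S$, and all other nodes are balanced). If instead $w=v_i\in V^*$, we distinguish two cases. Either $V(D)\setminus(S\cup V^*)\ne\emptyset$, in which case we pick a non-$V^*$ node $w'$ outside $S$ and argue as before; or $V(D)\setminus S\subseteq V^*$, meaning $\delta^{\mathrm{out}}(S)$ consists only of arcs entering nodes of $V^*$. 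In the latter case, since $V^*$ is a ``stable-like'' set under the hypothesis $\delta^{\mathrm{out}}(v_i)\cap\delta^{\mathrm{in}}(v_j)=\emptyset$, the cut $\delta^{\mathrm{out}}(S)$ decomposes as a disjoint union $\bigsqcup_{v_i\notin S}\big(\delta^{\mathrm{in}}(v_i)\cap\delta^{\mathrm{out}}(S)\big)$; and because the in-neighbours of each such $v_i$ all lie in $S$ (they are not in $V^*$ by the stability hypothesis, hence in $S$ since $V(D)\setminus S\subseteq V^*$), we in fact have $\delta^{\mathrm{in}}(v_i)\subseteq\delta^{\mathrm{out}}(S)$, so $c(\delta^{\mathrm{out}}(S))\ge c(\delta^{\mathrm{in}}(v_i))\ge 1$. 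This covers all cases.

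The main obstacle is precisely the case where the only nodes outside $S$ are in $V^*$: there is no flow variable certifying the cut, so one must fall back on the explicit inequalities $c(\delta^{\mathrm{in}}(v_i))\ge 1$, and for that one needs the hypothesis $\delta^{\mathrm{out}}(v_i)\cap\delta^{\mathrm{in}}(v_j)=\emptyset$ to guarantee that $\delta^{\mathrm{in}}(v_i)$ is actually contained in the cut $\delta^{\mathrm{out}}(S)$ (rather than partly inside $S$, i.e.\ coming from another $v_j\notin S$). I would make sure to spell out that this hypothesis implies no arc goes from $V^*$ to $V^*$, so that whenever $v_j\notin S$ every in-arc of $v_i$ originates in $V(D)\setminus V^*\subseteq S$. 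With that observation in hand, the remaining verifications are the routine flow-across-a-cut computations sketched above, and equality follows.
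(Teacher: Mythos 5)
Your proof is correct and follows essentially the same route as the paper: the nontrivial inclusion is checked cut-by-cut, using a flow $\phi^w$ whenever some node of $V(D)\setminus(V^*\cup\{r\})$ lies outside $S$, and otherwise using $\delta^{\mathrm{in}}(v_i)\subseteq\delta^{\mathrm{out}}(S)$ together with $c(\delta^{\mathrm{in}}(v_i))\ge 1$ (and you rightly make explicit that the hypothesis $\delta^{\mathrm{out}}(v_i)\cap\delta^{\mathrm{in}}(v_j)=\emptyset$ is what guarantees this containment, a point the paper leaves implicit). The only minor variation is in the easy inclusion, where the paper checks $c=\chi^A$ for an $r$-arborescence $A$ and exhibits path flows, whereas you derive $c(\delta^{\mathrm{in}}(v_i))\ge 1$ directly from the full flow-based description of $\arbdom(D)$; both arguments are fine.
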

\begin{proof}
Let $Q(D)$ denote the right-hand side of the target equation. Proving that $\arbdom(D) \subseteq Q(D)$ is  straightforward. Let $A \subseteq A(D)$ be an $r$-arborescence, and let $c := \chi^A$. For each $v \in V(D) \setminus (V^* \cup \{r\})$, define $\phi^v$ as the characteristic vector of the $r$--$v$ directed path in $A$. All constraints of $Q(D)$ are clearly satisfied by this choice of unit flows.

Now, we prove $Q(D) \subseteq \arbdom(D)$ by using the linear description of $\arbdom(D)$, see Section \ref{sec:stpwong}. Let $c \in Q(D)$ and let $\phi^v$, $v \in V(D) \setminus (V^* \cup \{r\})$ be corresponding flows. 

Consider any proper node subset $S$ with $r \in S$. If there is any $v \in V(D) \setminus S$ with $v \notin V^*$, then we have $c(\delta^{\mathrm{out}}(S)) \geq \phi^v(\delta^{\mathrm{out}}(S)) \geq 1$, since $S$ is an $r$--$v$ cut and $\phi^v$ is an $r$--$v$ flow of value~$1$.  Otherwise, $V(D) \setminus S \subseteq V^*$. Pick an arbitrary node $v_i \in V^* \cap (V(D) \setminus S)$. 
Since $\delta^{\mathrm{in}}(v_i) \subseteq \delta^{\mathrm{out}}(S)$, we have $c(\delta^{\mathrm{out}}(S)) \geq c(\delta^{\mathrm{in}}(v_i)) \geq 1$.
\end{proof}

We are now ready to describe the extended formulation for our intermediate polytope $R_{T_1,\ldots,T_k}(M_0)$. The formulation is similar to that given in the previous section for the graphic case, except that our starting point is the formulation for $\arbdom(D)$ given in Lemma \ref{lem:wongmodcographic}. Also, it turns out that we do not need the $\Delta$-variables. Finally, the root node $r$ should be picked outside of $V^* := \{v_1,\ldots,v_k\}$.

Using the same notation as above in Section~\ref{sec:tweaking_wong}, we define $R_{T_1,\ldots,T_k}(M_0)$ as the set of tuples $(x^0,x^{T_1'},x^{T_1''}$, \ldots,  
$x^{T_k'},x^{T_k''})$ such that there exist capacities $(c^0,c^{T_1'},c^{T_1''},\ldots,c^{T_k'},c^{T_k''})$ satisfying the following constraints. First, instead of~\eqref{constr:x-UB1} and \eqref{constr:x-UB2} we ask
\begin{align}
    &\forall uv \in E_0 : 0 \leq x^0_{uv} \leq 1 - c^0_{(u,v)} - c^0_{(v,u)}\,,\\
    &\forall i \in [k],\ uv \in T_i : 0 \leq x^{T'_i}_{uv} \leq 1 - c^{T'_i}_{(u,v)} - c^{T'_i}_{(v,u)},\ 
    0 \leq x^{T''_i}_{uv} \leq 1 - c^{T''_i}_{(u,v)} - c^{T''_i}_{(v,u)}\,.
\end{align}

Second, we impose constraints~\eqref{constr:c-arb_eq} and \eqref{constr:c-consistency} as before. 

Third, for all $v \in V(D) \setminus (V^* \cup \{r\})$ there exists $\phi^v \in \R^{A(D)}$ satisfying \eqref{constr:wongunit} and \eqref{constr:wongflowu}. 

Fourth, the flows $\phi^v$ should satisfy the bounds \eqref{constr:wongflowbound1} and 
\begin{align}
&\forall \ i \in [k],\ a \in B_i : 0 \leq \phi^v_a \leq c^{T'_i}_a,\ \phi^v_a \leq c^{T''_i}_a\,    
\end{align}
(this last constraint replaces~\eqref{constr:wongflowbound2}).

Fifth, we include explicit constraints on the capacities entering each node in $V^*$, as in Lemma~\ref{lem:wongmodcographic}:
\begin{align}
\label{constr:c-bound_explicit}&\forall i \in [k] : c^{T'_i}(\delta^{\mathrm{in}}(v_i)) \geq 1,\
c^{T''_i}(\delta^{\mathrm{in}}(v_i)) \geq 1
\end{align}

One can easily check that the extended formulation defining $R_{T_1,\ldots,T_k}(M_0)$ has size $O(|V(G)| \cdot |E(G)|)$.

\begin{proposition}\label{prop:wongtweakcographic}
Let $G$ be connected graph, let $V^* := \{v_1,\ldots,v_k\}$ be a nonempty stable set such that each $v_i$ has degree~$3$, let $T_i := \delta(v_i)$ for $i \in [k]$, and let $M_0 := M^*(G)$ be the cographic matroid associated with $G$. Letting $R_{T_1,\ldots,T_k}(M_0)$ be defined as above, we have $P_{T_1,\dots,T_k}(M_0) \subseteq R_{T_1,\ldots,T_k}(M_0) \subseteq Q_{T_1,\dots,T_k}(M_0)$. Hence, $\xc(P_{T_1,\dots,T_k}(M_0),Q_{T_1,\dots,T_k}(M_0)) = O(|V(G)| \cdot |E(G)|)$.
\end{proposition}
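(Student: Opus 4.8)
The plan is to mirror the structure of the proof of Proposition~\ref{prop:wongtweak}, establishing the two inclusions $P_{T_1,\dots,T_k}(M_0) \subseteq R_{T_1,\ldots,T_k}(M_0)$ and $R_{T_1,\ldots,T_k}(M_0) \subseteq Q_{T_1,\dots,T_k}(M_0)$ separately, and then invoking Proposition~\ref{prop:Balas} (Balas) together with the fact that the displayed extended formulation has $O(|V(G)|\cdot|E(G)|)$ inequalities to get the extension-complexity bound. Throughout, the dictionary $x \in B(M_0) \iff \mathbf 1 - x \in B(M_0^*)$ and the description $P(M^*(G)) = \{x : \mathbf 1 - x \in \text{dominant-of-spanning-tree}\}$ (restated via $\arbp(D)$) are what lets us reuse Wong's machinery; the complemented capacity inequalities $x \le 1 - c_{(u,v)} - c_{(v,u)}$ encode exactly this passage to the dual.

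For the inclusion $R_{T_1,\ldots,T_k}(M_0) \subseteq Q_{T_1,\dots,T_k}(M_0)$: fix any $I \subseteq [k]$ and the corresponding choice $T_i^* := T_i'$ for $i \notin I$, $T_i^* := T_i''$ for $i \in I$; we must show $(x^0, x^{T_1^*},\dots,x^{T_k^*}) \in P(M_0) = P(M^*(G))$. The constraints \eqref{constr:c-arb_eq}, \eqref{constr:c-consistency} (which make $c^0(A_0) + \sum_i c^{T_i^*}(B_i) = |V(D)|-1$ for every such choice), the flow constraints \eqref{constr:wongunit}--\eqref{constr:wongflowu} for $v \in V(D)\setminus(V^*\cup\{r\})$ with capacity bounds \eqref{constr:wongflowbound1} and $0 \le \phi^v_a \le c^{T_i^*}_a$ on $B_i$, and the explicit inequalities \eqref{constr:c-bound_explicit} are precisely the hypotheses of Lemma~\ref{lem:wongmodcographic} applied to $D$ with root $r$ and the nodes $v_1,\dots,v_k$ (note the hypothesis $\delta^{\mathrm{out}}(v_i)\cap\delta^{\mathrm{in}}(v_j)=\emptyset$ holds because $V^*$ is a stable set). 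Hence the relevant $c$-vector lies in $\arbdom(D)$, and together with the degree equation it lies in $\arbp(D)$; then $y_{uv} := c_{(u,v)}+c_{(v,u)}$ is in $\stp(G)$, so $\mathbf 1 - x$ with $0 \le x \le \mathbf 1 - y$ is dominated by a point of $B(M_0)$ after complementing, giving $x \in P(M_0)$ by the antiblocking characterization. One has to check the easy point that $r \notin V^*$ is legitimate, which the paragraph before the proposition already arranges ($r := a_1$ works).

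For the inclusion $P_{T_1,\ldots,T_k}(M_0) \subseteq R_{T_1,\ldots,T_k}(M_0)$: since $R_{T_1,\ldots,T_k}(M_0)$ is of antiblocking type, it suffices to realize each generating $0/1$ vector $(\chi^{J_0},\chi^{J_1'},\chi^{J_1''},\dots,\chi^{J_k'},\chi^{J_k''})$ where every $J_0 \cup J_1^* \cup \cdots \cup J_k^*$ is a \emph{cobasis} of $M_0$ (equivalently, its complement in $E(G)$ is a spanning tree of $G$) and $J_i' = J_i''$ or $\{J_i',J_i''\} = \{\{\alpha_i,\beta_i\},\{\beta_i,\gamma_i\}\}$. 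We build the capacities by taking the spanning tree $S^* := E(G) \setminus (J_0 \cup J_1' \cup \cdots \cup J_k')$, orienting it as an $r$-arborescence, and setting $c$ to its characteristic vector; define $\phi^v$ as the path in that arborescence from $r$ to $v$ for $v \in V(D)\setminus(V^*\cup\{r\})$. When $J_i' = J_i''$ set $c^{T_i''} := c^{T_i'}$. The interesting case is $J_i' = \{\alpha_i,\beta_i\}$, $J_i'' = \{\beta_i,\gamma_i\}$: then on the cut $\delta(v_i) = \{\alpha_i,\beta_i,\gamma_i\}$, the spanning tree $S^*$ uses exactly the one edge $\gamma_i = c_iv_i$, and we need an alternative capacity $c^{T_i''}$ on $B_i$ (supported on the single edge $\alpha_i = a_iv_i$) so that the complementary tree uses $\alpha_i$ instead, together with flows routed appropriately. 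The technical heart is checking that $c^{T_i''}$ still yields $c^{T_i''}(\delta^{\mathrm{in}}(v_i)) \ge 1$, that $c^{T_i''}(B_i) = c^{T_i'}(B_i)$ so \eqref{constr:c-consistency} holds, that $\phi^v \le c^{T_i''}$ on $B_i$, and that the complemented $x$-bounds hold; because $V^*$ is stable and each $v_i$ has degree exactly three, the rerouting within a single star $\delta(v_i)$ is local and does not interfere across different $i$, so one handles each $i$ independently. I expect this local rerouting bookkeeping — specifying the orientation of the single arc in $c^{T_i''}$ and the matching flow values $\phi^v$ on $B_i$ in each sub-case according to where $v$ sits relative to $v_i$ — to be the main obstacle, exactly analogous to (but simpler than, since no $\Delta$-variables are needed) the case analysis encoded in Figure~\ref{fig:rerouting}; the absence of circulations is what makes the cographic side cleaner, since a degree-$3$ cut node can be reconnected using a single alternative arc rather than rerouting around a triangle. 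Finally, once both inclusions are in hand, counting shows the formulation has $O(|V(G)|\cdot|E(G)|)$ constraints, giving $\xc(P_{T_1,\dots,T_k}(M_0),Q_{T_1,\dots,T_k}(M_0)) = O(|V(G)|\cdot|E(G)|)$.
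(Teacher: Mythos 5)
Your proposal follows essentially the same route as the paper's proof: the same argument for $R_{T_1,\ldots,T_k}(M_0) \subseteq Q_{T_1,\dots,T_k}(M_0)$ via Lemma~\ref{lem:wongmodcographic} combined with \eqref{constr:c-arb_eq}, \eqref{constr:c-consistency} and \eqref{constr:c-bound_explicit}, and the same construction for $P_{T_1,\dots,T_k}(M_0) \subseteq R_{T_1,\ldots,T_k}(M_0)$, namely taking $c$ to be the $r$-arborescence obtained from the complement of $J_0\cup J_1'\cup\cdots\cup J_k'$ and, when $J_i'=\{\alpha_i,\beta_i\}$, $J_i''=\{\beta_i,\gamma_i\}$, replacing $c^{T_i'}=\mathbf{e}_{(c_i,v_i)}$ by $c^{T_i''}:=\mathbf{e}_{(a_i,v_i)}$. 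The ``rerouting bookkeeping'' you flag as the main remaining obstacle in fact disappears: since $J_i'$ and $J_i''$ each contain two of the three edges of $\delta(v_i)$, the node $v_i$ is a leaf of every relevant spanning tree, so every flow $\phi^v$ with $v \in V(D)\setminus(V^*\cup\{r\})$ vanishes on $B_i$ and the bounds $0\le\phi^v_a\le c^{T_i'}_a$, $\phi^v_a\le c^{T_i''}_a$ hold trivially, with no case analysis on the position of $v$ relative to $v_i$ needed.
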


\begin{proof}
Let $x := (x^0,x^{T'_1},x^{T''_1},\ldots,x^{T'_k},x^{T'_k}) \in R_{T_1,\ldots,T_k}(M_0)$, let $(c^0,c^{T'_1},c^{T''_1},\ldots,c^{T'_k},c^{T''_k})$ be capacities and let $\phi^v$, $v \in V(D) \setminus (V^* \cup \{r\})$ be unit flows witnessing $x \in R_{T_1,\ldots,T_k}(M_0)$. It should be clear from Lemma~\ref{lem:wongmodcographic} and \eqref{constr:c-arb_eq} and \eqref{constr:c-consistency} that $(c^0,c^{T*_1},\ldots,c^{T^*_k})$ is in the $r$-arborescence polytope for every choice of $T^*_i \in \{T'_i,T''_i\}$, $i \in [k]$. Hence, $(x^0,x^{T*_1},\ldots,x^{T^*_k}) \in P(M_0)$ for every choice of $T^*_i$. Therefore, $x \in Q_{T_1,\ldots,T_k}(M_0)$. This proves the rightmost inclusion.

In order to prove the leftmost inclusion, let $J_0 \subseteq E_0$, $J_1', J_1'' \subseteq T_1$, \ldots, $J_k', J_k'' \subseteq T_k$ be such that $J_0 \cup J_1^* \cup \cdots \cup J_k^*$ is a basis of $M_0$ (that is, the complement of a spanning tree of $G$) for all choices of $J_i^* \in \{J_i',J_i''\}$, $i \in [k]$ and in addition $J_i'=J_i''$ or $J_i'=\{\alpha_i,\beta_i\}$ and $J_i''=\{\beta_i,\gamma_i\}$ for all $i \in [k]$. 

Let $x := (\chi^{J_0},\chi^{J_1'}, \chi^{J_1''},\dots, \chi^{J_k'}, \chi^{J_k''})$. The capacities $(c^0,c^{T'_1},\ldots,c^{T'_k}) \in \R^{A(D)}$ are simply the characteristic vector of the $r$-arborescence obtained from the complement of $J_0 \cup J'_1 \cup \ldots \cup J'_k$. In case $J_i'' = J_i'$ we let $c^{T''_i} := c^{T'_i}$. Otherwise, $J'_i = \{\alpha_i,\beta_i\}$ and $J''_i = \{\beta_i,\gamma_i\}$, which means in particular that $v_i$ is a leaf of all spanning trees $E(G) \setminus (J_0 \cup J_1^* \cup \cdots \cup J_k^*)$. Also, $c^{T'_i} = \mathbf{e}_{(c_i,v_i)}$. We define $c^{T''_i} \in \R^{B_i}$ by letting $c^{T''_i} := \mathbf{e}_{(a_i,v_i)}$.

Again, all the constraints defining $R_{T_1,\ldots,T_k}(M_0)$ are satisfied by this choice of capacities $(c^0,c^{T'_1},c^{T''_1}$, \ldots, $c^{T'_k},c^{T''_k})$ and flows $\phi^v$ for $v \in V(D) \setminus (V^* \cup \{r\})$. \end{proof}

We are now ready to prove Lemma~\ref{lem:star_bound}.

\begin{proof}[Proof of Lemma~\ref{lem:star_bound}]
The bound on $\xc(P(M))$ follows directly either from Propositions~\ref{prop:asymmetricstar} and \ref{prop:wongtweak} in case $M_0$ is graphic, or from  Propositions~\ref{prop:asymmetricstar} and \ref{prop:wongtweakcographic} in case $M_0$ is cographic.
\end{proof}

\section{The circuit dominant of a regular matroid}\label{sec:cdom}

In this section, we deal with another polyhedron that one can define for every matroid $M$, and provide a small extended formulation for it whenever $M$ is regular. 

Recall that the circuit dominant of a matroid $M$ is $\cdom(M) = \conv\{\chi^C \mid C $ is a circuit of $M\}+\R^{E(M)}_+$. When $M$ is cographic, $\cdom(M)$ is known as the \emph{cut dominant}, for which several polynomial size extended formulations are known \cite{tamir1994polynomial, weltge2015sizes}. In this section, we extend this to all regular matroids. We stress that, even for the special case of cographic matroids, a complete facial description of the circuit dominant in the original space is not known (see \cite{conforti2016cut, conforti2004cut}).

We remark that, in \cite{kapadia2014}, a polynomial time algorithm is given to find a minimum weight circuit in a regular matroid (under the assumption that the weights are non-negative). The algorithm uses Seymour's decomposition theorem, which suggests that a small extended formulation for $\cdom(M)$ could be found using similar techniques as done in this paper for the independence polytope. However, we give here a direct extended formulation that is based on total unimodularity only. Given its compactness, our extended formulation together with any polynomial time algorithm for linear programming provides an alternative way to find minimum weight circuits in regular matroids, that is arguably simpler than the one given in~\cite{kapadia2014}. 

Before establishing the main result of this section, we need the following simple fact on cycles of regular matroids. 

\begin{proposition}\label{prop:circuitvector}
Let $M$ be a regular matroid on ground set $E$ and let $A$ be a TU (totally unimodular) matrix representing $M$ over $\R$. Let $C \subseteq E$. Then $C$ is a cycle of $M$ if and only if there is a vector $\psi = \psi(C) \in \{-1,0,1\}^{E}$ whose support is $C$ and such that $A \psi =\mathbf{0}$.
\end{proposition}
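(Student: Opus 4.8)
The statement is an ``if and only if'' about when a set $C \subseteq E$ is a cycle of a regular matroid $M$ represented by a TU matrix $A$, namely that this happens precisely when there is a $\{-1,0,1\}$-vector $\psi$ supported exactly on $C$ with $A\psi = \mathbf 0$. The plan is to pass through the representation of $M$ over $\mathbb F_2$ obtained by reducing $A$ mod $2$, and to exploit the fact that, for a TU matrix, a $\{0,1\}$ (or more generally $\{-1,0,1\}$) support pattern of a mod-$2$ kernel vector can always be lifted to an integral (hence $\{-1,0,1\}$) kernel vector over $\R$. Recall from the preliminaries in Section~\ref{sec:decomp} that the cycles of the binary matroid represented by a matrix $\bar A \in \mathbb F_2^{m \times n}$ are exactly the supports of solutions of $\bar A x = \mathbf 0$ over $\mathbb F_2$; since $M$ is regular, reducing $A$ mod $2$ gives such a representation of $M$.

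\textbf{Forward direction.} First I would assume $\psi \in \{-1,0,1\}^E$ has support $C$ and $A\psi = \mathbf 0$ over $\R$. Reducing mod $2$, the vector $\bar\psi := \psi \bmod 2 \in \mathbb F_2^E$ satisfies $\bar A \bar\psi = \mathbf 0$, and its support is still exactly $C$ because each nonzero entry of $\psi$ is $\pm 1$, which is odd. Hence $C$ is the support of a kernel vector of the $\mathbb F_2$-representation of $M$, so $C$ is a cycle of $M$. This direction is essentially immediate.

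\textbf{Reverse direction.} Now suppose $C$ is a cycle of $M$. Then there is a vector $x \in \mathbb F_2^E$ with $\bar A x = \mathbf 0$ and support $C$. Lift $x$ to the $\{0,1\}$-vector $\chi^C \in \Z^E$. Then $A\chi^C \equiv \mathbf 0 \pmod 2$, i.e.\ $A\chi^C = 2w$ for some integer vector $w$. I want to correct the signs of the entries of $\chi^C$ to kill this: I seek $\psi \in \{-1,0,1\}^E$ with $|\psi_e| = (\chi^C)_e$ (so that $\psi$ has support exactly $C$) and $A\psi = \mathbf 0$. Writing $\psi_e = (\chi^C)_e \cdot \epsilon_e$ with $\epsilon_e \in \{-1,1\}$ for $e \in C$, the condition $A\psi = \mathbf 0$ is a system of linear equations with $\pm 1$ unknowns. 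The clean way to get it is: consider the vector $\chi^C \in \R^E$; since $A$ is TU, the polyhedron $\{y \in \R^E : A y = \mathbf 0,\ -\chi^C \le y \le \chi^C\}$ is an integral polytope (it is defined by a TU system with integral right-hand side), and $\mathbf 0$ lies in it; I then want to argue it contains a vertex whose support is all of $C$. Actually a cleaner route avoids vertices entirely: take any cycle $C$, use that $C$ as a cycle of the binary matroid decomposes into circuits, and for each circuit handle it directly; but the slickest argument is the standard one that for a TU matrix $A$, if $\bar A x = \mathbf 0$ over $\mathbb F_2$ with support $S$, then the real vector obtained by solving $A\psi = \mathbf 0$ within the box $|\psi| \le \chi^S$ can be taken integral, and integrality together with $|\psi_e| \le 1$ forces $\psi_e \in \{-1,0,1\}$; one then separately checks the support is all of $S$ using that $S$ is a cycle, i.e.\ minimal-by-inclusion obstructions (circuits) each force their coordinates nonzero. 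I would phrase it via: restrict attention to the circuits $C_1, \dots, C_t$ partitioning $C$; for a single circuit $C_j$, the columns of $A$ indexed by $C_j$ are minimally linearly dependent over $\mathbb F_2$, and by TU-ness the unique (up to scaling) real dependency has all coefficients in $\{-1,0,1\}$ and in fact all nonzero on $C_j$ (since removing any column breaks the dependency); summing these $\psi(C_j)$ over $j$ gives $\psi(C)$ with support exactly $C = \bigsqcup_j C_j$ and $A\psi(C) = \mathbf 0$.

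\textbf{Main obstacle.} The only subtle point is establishing, for a single circuit $C_j$ of the matroid, that the real linear dependency among the corresponding columns of the TU matrix $A$ has \emph{all} coefficients equal to $0$ or $\pm 1$ \emph{and} all of them nonzero. Nonzero-ness is just minimality of circuits: a proper subset of $C_j$ is independent, so no column can drop out. The $\{-1,0,1\}$ claim is where total unimodularity is used: the dependency $\psi$ restricted to $C_j$ solves $A_{\cdot, C_j}\, \psi_{C_j} = \mathbf 0$ with $\psi_{C_j}$ unique up to scalar; picking a coordinate $e_0 \in C_j$ and normalizing $\psi_{e_0} = 1$, Cramer's rule expresses the remaining $\psi_e$ as ratios of minors of (a submatrix of) $A$, and TU-ness makes all these minors $0$ or $\pm 1$; combined with $\psi$ being a genuine (nonzero everywhere on $C_j$) solution and the support being a circuit, one gets $\psi_e \in \{-1, 1\}$ for all $e \in C_j$. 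I would either invoke this Cramer's-rule-with-TU argument directly, or cite the standard fact (e.g.\ from Schrijver~\cite{schrijver2003combinatorial}) that the kernel of a TU matrix is generated by $\{-1,0,1\}$-vectors whose supports are the circuits of the represented matroid; the latter makes the whole proposition a two-line consequence, with the forward direction as above and the reverse direction just reading off such a generator supported on the given circuit-decomposition of $C$.
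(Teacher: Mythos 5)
Your proof is correct, and its key step differs from the paper's. For the ``only if'' direction, both you and the paper reduce to the case of a single circuit (a cycle is a disjoint union of circuits, and the vectors can be summed since their supports are disjoint). But where you then argue via Cramer's rule --- normalize one coordinate of the unique-up-to-scaling dependency on the circuit, pick a nonsingular square row-submatrix, and use total unimodularity to conclude that all entries are ratios of $\{0,\pm 1\}$ determinants with denominator $\pm 1$, hence in $\{-1,0,1\}$, and nonzero by circuit minimality --- the paper instead takes an arbitrary real dependency $\varphi$ supported on $C$, scales it by $\|\varphi\|_\infty$ to land in the polytope $P=\{x \mid Ax=\mathbf 0,\ -\chi^C\leq x\leq \chi^C,\ x_e=0 \text{ for } e\notin C\}$, and invokes integrality of this TU-described polytope to extract a nonzero vertex in $\{-1,0,1\}^E$, whose support must be all of $C$ again by minimality. (You sketched this polyhedral route but left the ``nonzero vertex with full support'' step unfinished before switching to the Cramer argument, which you did complete.) The two approaches buy essentially the same thing: yours is more elementary and self-contained, needing only the definition of total unimodularity, while the paper's is shorter given the standard integrality theorem for TU systems and avoids choosing a row basis. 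One small imprecision on your side: you speak of the columns indexed by a circuit being minimally dependent \emph{over $\mathbb F_2$}, but the Cramer computation is over $\R$; what you need (and have, since $A$ is assumed to represent $M$ over $\R$) is that circuits of $M$ are exactly the minimal $\R$-linearly dependent column sets of $A$, so this is a wording issue rather than a gap. Your ``if'' direction (reduce mod $2$, note $\pm 1$ entries stay nonzero) is the same as the paper's.
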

\begin{proof}
First, we remark that, since $A$ is TU, $A$ represents $M$ over any field, in particular over $\mathbb{F}_2$. Hence, the ``if" direction follows. Indeed, suppose that a vector $\psi \in \{-1,0,1\}^{E}$ with support $C$ satisfies $A \psi = \mathbf{0}$. Then, taking this equation over $\mathbb{F}_2$, we see that $A \chi^C = \mathbf{0}$, implying that $C$ is a cycle of $M$. 

For the ``only if" direction, we can restrict $C$ to be a circuit. Let $\varphi \in \R^{E}$ be any vector such that $A \varphi = \mathbf{0}$ and the support of $\varphi$ equals $C$. Consider the polytope $P = P(C) := \{x \in \R^{E} \mid Ax = \mathbf{0},\ \forall e \in C : -1 \leqslant x_e \leqslant 1,\ \forall e \in E \setminus C : x_e = 0\}$. Since $A$ is TU, $P$ is integer. In fact, the vertex set of $P$ is contained in $\{-1,0,1\}^{E}$. Since, $\varphi / ||\varphi||_\infty$ is a non-zero point in $P$, we see that $P$ has a vertex that is non-zero. Let $\psi$ be any such vertex. Since $C$ is a circuit, $\psi$ satisfies all the required properties.
\end{proof}

\begin{theorem}\label{thm:cdom}
Let $M$ be a regular matroid on ground set $E$, represented (over $\R$) by a totally unimodular matrix $A$. For $e\in E$, define the polyhedron 
\begin{align}  
\nonumber \cdom(M,e) := \{x \mid \exists y : -x \leqslant y &\leqslant x,\\
Ay &=\mathbf{0},\label{eq:cdomfirst}\\
y_e &=1,\\
-\mathbf{1} \leqslant y &\leqslant \mathbf{1}\}\,.\label{eq:cdomlast}
\end{align}
Then $\cdom(M)=\conv \left( \bigcup_{e\in E}\cdom(M,e) \right)$. In particular, $\xc(\cdom(M)) = O(|E|^2)$.
\end{theorem}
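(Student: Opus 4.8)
The goal is to show $\cdom(M) = \conv\bigl(\bigcup_{e \in E}\cdom(M,e)\bigr)$; the size bound then follows immediately, since each $\cdom(M,e)$ has an extended formulation with $O(|E|)$ inequalities, so by Proposition~\ref{prop:Balas} (applied to the recession-translated polytopes, or directly to the dominants) the convex hull of the $|E|$ of them has extension complexity $O(|E|^2)$. So the whole content is the polyhedral identity, which I would prove by double inclusion.

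\textbf{Inclusion $\supseteq$.} First I would check that each $\cdom(M,e)$ is contained in $\cdom(M)$; since $\cdom(M)$ is convex, this gives the inclusion of the convex hull. Take $x$ with a witness $y$ satisfying \eqref{eq:cdomfirst}--\eqref{eq:cdomlast}. The key point is that $Ay = \mathbf{0}$, $-\mathbf{1} \le y \le \mathbf{1}$, $y_e = 1$ defines an integral polytope (TU-ness of $A$), so $y$ is a convex combination of vertices $\psi^{(1)}, \dots, \psi^{(m)} \in \{-1,0,1\}^E$, each with $\psi^{(j)}_e = 1$ (as $y_e = 1$ is attained at the vertices), each with $A\psi^{(j)} = \mathbf{0}$. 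By Proposition~\ref{prop:circuitvector}, the support $C_j$ of $\psi^{(j)}$ is a cycle of $M$ containing $e$; write $C_j$ as a disjoint union of circuits, one of which, say $C_j'$, contains $e$. Then $\chi^{C_j'} \le |\psi^{(j)}|$ coordinatewise. Taking the convex combination, $|y| \le \sum_j \lambda_j |\psi^{(j)}| $ is... actually one must be slightly careful: I would instead argue $|y|$ is dominated (coordinatewise, using the triangle inequality on the convex combination) by $\sum_j \lambda_j |\psi^{(j)}|$, which in turn is $\ge \sum_j \lambda_j \chi^{C_j'}$, a convex combination of circuit incidence vectors. Since $x \ge |y| \ge \sum_j \lambda_j \chi^{C_j'}$, we get $x \in \cdom(M)$. (Here I am using that each $C_j'$ is a genuine circuit, so $\chi^{C_j'}$ is a vertex of the relevant polytope; and that $\cdom(M)$ is upward closed.)

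\textbf{Inclusion $\subseteq$.} Conversely, it suffices to show that for every circuit $C$ of $M$, $\chi^C$ lies in $\bigcup_e \cdom(M,e)$, and that the recession cone $\R^E_+$ is contained in the recession cone of the right-hand side; the latter is clear since each $\cdom(M,e)$ recedes in all of $\R^E_+$ (the constraint is $-x \le y \le x$ with $y$ fixed-feasible, so increasing any $x_f$ preserves feasibility). For $\chi^C$: pick any $e \in C$, let $\psi = \psi(C) \in \{-1,0,1\}^E$ be the vector from Proposition~\ref{prop:circuitvector} with support $C$ and $A\psi = \mathbf{0}$; by flipping the global sign we may assume $\psi_e = 1$. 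Then $y := \psi$ satisfies \eqref{eq:cdomfirst}--\eqref{eq:cdomlast}, and $-\chi^C \le \psi \le \chi^C$ since $|\psi| = \chi^C$ exactly on $C$ and both sides vanish off $C$. Hence $\chi^C \in \cdom(M,e) \subseteq \bigcup_{e}\cdom(M,e)$, so $\chi^C$ is in the convex hull, and taking convex hull plus recession cone gives $\cdom(M) \subseteq \conv\bigl(\bigcup_e \cdom(M,e)\bigr)$.

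\textbf{Main obstacle.} The delicate step is the $\supseteq$ direction: extracting, from a fractional $y$, a convex combination of \emph{circuit} incidence vectors dominated by $|y|$. One must invoke integrality of the TU-defined polytope to decompose $y$ into $\pm1/0$ vertices, then pass from each vertex's support (a cycle) to a circuit through $e$ inside it, and finally verify the coordinatewise domination survives the convex combination. I would make sure the polytope whose vertices I use is exactly $\{y : Ay = 0,\ -\mathbf{1}\le y\le\mathbf{1},\ y_e = 1\}$ and that fixing $y_e = 1$ (a face) keeps it integral, and that every vertex indeed has $y_e = 1$ — which it does, being in that face. Everything else is routine.
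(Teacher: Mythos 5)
Your easy direction (that $\chi^C \in \cdom(M,e)$ for any circuit $C \ni e$, via $y := \pm\psi(C)$, plus the recession-cone observation) is fine and matches the paper. The gap is in your other inclusion, $\cdom(M,e) \subseteq \cdom(M)$: after decomposing the witness $y = \sum_j \lambda_j \psi^{(j)}$ into $\{0,\pm1\}$ vertices of the integral polytope $Q(e) = \{y : Ay = \mathbf{0},\ y_e = 1,\ -\mathbf{1} \le y \le \mathbf{1}\}$, the triangle inequality gives $|y| \le \sum_j \lambda_j |\psi^{(j)}|$, i.e.\ the \emph{wrong} direction, so from $x \ge |y|$ you cannot conclude $x \ge \sum_j \lambda_j \chi^{C_j'}$; your chain ``$x \ge |y| \ge \sum_j \lambda_j \chi^{C_j'}$'' is false as written. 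Sign cancellations really occur: take the rank-one regular matroid of four parallel edges $e,f,g,h$ (represented by $A = (1,1,1,1)$), and $y = \tfrac12(1,-1,0,0) + \tfrac12(1,1,-1,-1) = (1,0,-\tfrac12,-\tfrac12)$, $x = (1,0,\tfrac12,\tfrac12)$. Both summands are vertices of $Q(e)$, the first has support the circuit $\{e,f\}$, so any choice of circuits through $e$ inside the supports forces $f$-coordinate at least $\tfrac12$ in $\sum_j \lambda_j \chi^{C_j'}$, while $x_f = 0$. The point $x$ does lie in $\cdom(M)$ (via the different decomposition $\tfrac12\chi^{\{e,g\}}+\tfrac12\chi^{\{e,h\}}$), but your argument gives no way to produce such a good decomposition from an arbitrary one, so the inclusion is not established.

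The paper avoids this by never decomposing a fractional witness: it proves the stronger identity $\cdom(M,e) = \conv\{\chi^C \mid C \text{ circuit},\ e \in C\} + \R^E_+$ by arguing only at \emph{vertices}. Given a vertex $x^*$ of $\cdom(M,e)$, one lifts it to a vertex $(x^*,y^*)$ of $R(e) := \{(x,y) \mid -x \le y \le x,\ y \in Q(e)\}$, notes that $y^*$ must vanish on $I_0 := \{i \mid x^*_i = 0\}$ and hence is a vertex of $Q(e) \cap \{y \mid y_i = 0 \ \forall i \in I_0\}$, which is integral because a TU system with integral right-hand sides intersected with a coordinate subspace stays integral. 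Thus $y^* \in \{0,\pm1\}^E$, $x^* = \chi^D$ with $D$ the support of $y^*$ a cycle containing $e$, and $x^*$ dominates $\chi^C$ for a circuit $C \ni e$ inside $D$; no convex combination, and hence no cancellation, ever enters. If you replace your decomposition step by this vertex-plus-coordinate-subspace-integrality argument (and keep your recession-cone remark, since $\cdom(M,e)$ recedes exactly in $\R^E_+$), the proof is complete; the $O(|E|^2)$ bound via Balas for polyhedra with a common recession cone is then as you say.
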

\begin{proof}
It suffices to show, for each $e\in E$, that
$$
\cdom(M,e)=\conv\{\chi^C \mid C \text{ is a circuit of } M, e\in C\}+\R^E_+\,.
$$
Fix an arbitrary element $e \in E$.

For the ``$\supseteq$" inclusion, let $C$ be a circuit of $M$ with $e\in C$. Then $\chi^C \in \cdom(M,e)$ by setting $y := \pm \psi(C)$ (see Proposition \ref{prop:circuitvector}). The inclusion follows since $\cdom(M,e)$ is upward closed. 

For the ``$\subseteq$" inclusion, we show that any vertex of $\cdom(M,e)$ dominates $\chi^C$, for some circuit $C$ of $M$ containing $e$.
Consider the polytope $Q = Q(e) := \{y \mid \text{\eqref{eq:cdomfirst}--\eqref{eq:cdomlast}}\}$ and the polyhedron $R = R(e) := \{(x,y) \mid -x \leqslant y \leqslant x,\ y \in Q\}$. 

Fix a vertex $x^*$ of $\cdom(M,e)$, and let $I_0 := \{i \mid x^*_i = 0\}$. There exists $y^*\in Q$ such that $(x^*,y^*)$ is a vertex of $R$. Notice that $y^*_i = 0$ for all $i \in I_0$. For $i \notin I_0$, we have either $x^*_i = y^*_i$ or $x^*_i = -y^*_i$, but not both. Necessarily, $y^*$ is a vertex of the polytope $Q \cap \{y \mid \forall i \in I_0 : y_i = 0\}$. Since $Q$ is defined by a TU system with integral right-hand sides, its intersection with any coordinate subspace is integral. Hence $y^*$ is an integer point, and so is $(x^*,y^*)$. 

Let $D$ denote the support of $y^*$, so that $x^* = \chi^D$. By Proposition \ref{prop:circuitvector}, $D$ is a cycle of $M$. Moreover, $D$ contains $e$. Thus, $D$ contains a circuit $C$ of $M$ containing $e$. Since $x^*$ dominates $\chi^C$, we are done.

The bound on $\xc(\cdom(M))$ now follows from Balas Union (see Proposition \ref{prop:Balas}), a version of which applies to unbounded polyhedra with the same recession cone \cite{balas1979disjunctive}.
\end{proof}

We conclude the section by remarking that, due to the fact that the dual of a regular matroid is regular, Theorem \ref{thm:cdom} applies to the \emph{cocircuit} dominant as well (which is defined in the obvious way).

\section{Discussion} \label{sec:discussion}

It is straightforward to improve the $O(n^6)$ bound of Theorem~\ref{thm:main} to a $O(n^{6-\varepsilon})$ bound, for sufficiently small $\varepsilon > 0$ (for instance, we may take $\epsilon = .41$). However, we believe that a better bound should hold. We leave this as our first open problem. 

Related to this question, we suspect that the simple upper bound $\xc(P(M_1 \oplus_3 M_2)) \leq \xc(P(M_1)) + \xc(P(M_2))$ \emph{fails} for some regular matroids $M_1$ and $M_2$, although we do not have any concrete counterexample. If the simple bound held, then this would give a $O(n^2)$ upper bound on $\xc(P(M))$ for all regular matroids $M$ on $n$ elements, see~\cite{kaibel2016extended,weltge2015sizes}.

Weltge~\cite{weltge2015sizes} proved that for every graphic matroid $M = M(G)$, the extension complexity of $\cdom(M^*)$ (that is, the cut dominant of $G$)  is bounded as follows:
\begin{equation}
\label{eq:Weltge}
\xc(\cdom(M^*)) \leqslant \xc(P(M))+O(|E(M)|)\,.
\end{equation}
We do not know whether $\xc(P(M))$ can be similarly bounded in terms of $\xc(\cdom(M^*))$. If this could be done for all regular matroids $M$, then this could lead to an improved upper bound on the extension complexity of $P(M)$, via Theorem~\ref{thm:cdom}. 


Rothvoss~\cite{rothvoss2013some} has proved via a counting argument involving \emph{sparse paving} matroids that the independence polytope of many matroids has exponential extension complexity. It is unclear that one can find an explicit infinite family of sparse paving matroids $M$ with $\xc(P(M))$ superpolynomial, since that would automatically yield an explicit infinite family of Boolean functions requiring superlogarithmic depth circuits, see G\"o\"os~\cite{Goos16}.

At this point, we do not know what is the worst case extension complexity of $P(M)$ even when $M$ is a binary matroid. Let $f(n)$ denote the maximum of $\xc(P(M))$ where $M$ is a binary matroid on $n$ elements. Is $f(n)$ polynomial? This is our second open problem. 

We stress that optimizing over $\cdom(M)$ in a general binary matroid is NP-hard~\cite{vardy1997intractability}. We suspect that $\xc(\cdom(M))$ is superpolynomial for these matroids. If \eqref{eq:Weltge} could be generalized to all binary matroids (even with worse polynomial bounds), then this would give explicit binary matroids with $\xc(P(M))$ superpolynomial.

We conclude by remarking that, by Proposition 3.3.10 of \cite{weltge2015sizes}, Theorem \ref{thm:main} implies a polynomial bound on the extension complexity of independence polytopes of almost-regular matroids.

\section{Acknowledgements}

The first author thanks Georg Loho, Volker Kaibel, Matthias Walter and Stefan Weltge for joining the first attempts to solve the flaw in~\cite{kaibel2016extended}. We also thank Tony Huynh for taking part in the early stages of the research. Finally, we thank Stefan Weltge for discussions related to Section~\ref{sec:cdom}. This project was supported by ERC Consolidator Grant 615640-ForEFront.

\bibliographystyle{plain}
\bibliography{regular-bib}

\begin{thebibliography}{10}

\bibitem{aprile20182}
Manuel Aprile, Alfonso Cevallos, and Yuri Faenza.
\newblock On 2-level polytopes arising in combinatorial settings.
\newblock {\em SIAM Journal on Discrete Mathematics}, 32(3):1857--1886, 2018.

\bibitem{balas1979disjunctive}
Egon Balas.
\newblock Disjunctive programming.
\newblock {\em Annals of Discrete Mathematics}, 5:3--51, 1979.

\bibitem{conforti2016cut}
Michele Conforti, Samuel Fiorini, and Kanstantsin Pashkovich.
\newblock Cut dominants and forbidden minors.
\newblock {\em SIAM journal on discrete mathematics}, 30(3):1571--1589, 2016.

\bibitem{conforti2004cut}
Michele Conforti, Giovanni Rinaldi, and Laurence Wolsey.
\newblock On the cut polyhedron.
\newblock {\em Discrete Mathematics}, 277(1-3):279--285, 2004.

\bibitem{dinitz2014matroid}
Michael Dinitz and Guy Kortsarz.
\newblock Matroid secretary for regular and decomposable matroids.
\newblock {\em SIAM Journal on Computing}, 43(5):1807--1830, 2014.

\bibitem{fiorini2012linear}
Samuel Fiorini, Serge Massar, Sebastian Pokutta, Hans~Raj Tiwary, and Ronald
  De~Wolf.
\newblock Linear vs. semidefinite extended formulations: exponential separation
  and strong lower bounds.
\newblock In {\em Proceedings of the forty-fourth annual ACM symposium on
  Theory of computing}, pages 95--106. ACM, (2012).

\bibitem{Goos16}
Mika G{\"o\"o}s.
\newblock Personal communication, 2016.

\bibitem{goos2018extension}
Mika G{\"o}{\"o}s, Rahul Jain, and Thomas Watson.
\newblock Extension complexity of independent set polytopes.
\newblock {\em SIAM Journal on Computing}, 47(1):241--269, 2018.

\bibitem{GurjarV17}
Rohit Gurjar and Nisheeth~K. Vishnoi.
\newblock Extended formulations for polytopes of regular matroids.
\newblock ArXiv:1701.00538, 2017.

\bibitem{kaibel2016extended}
Volker Kaibel, Jon Lee, Matthias Walter, and Stefan Weltge.
\newblock Extended formulations for independence polytopes of regular matroids.
\newblock {\em Graphs and Combinatorics}, 32(5):1931--1944, 2016.

\bibitem{Kaibel2019}
Volker Kaibel, Jon Lee, Matthias Walter, and Stefan Weltge.
\newblock Correction to: Extended formulations for independence polytopes of
  regular matroids.
\newblock {\em Graphs and Combinatorics}, Dec 2019.

\bibitem{KaibelPT12}
Volker Kaibel, Kanstantsin Pashkovich, and Dirk~Oliver Theis.
\newblock Symmetry matters for sizes of extended formulations.
\newblock {\em {SIAM} J. Discrete Math.}, 26(3):1361--1382, 2012.

\bibitem{kapadia2014}
Rohan Kapadia.
\newblock The matroid union, 2014.

\bibitem{Martin91}
R.~Kipp Martin.
\newblock Using separation algorithms to generate mixed integer model
  reformulations.
\newblock {\em Operations Research Letters}, 10(3):119--128, 1991.

\bibitem{oxley2006matroid}
J.~G. Oxley.
\newblock {\em Matroid theory}, volume~3.
\newblock Oxford University Press, USA, 2006.

\bibitem{rothvoss2013some}
Thomas Rothvo{\ss}.
\newblock Some 0/1 polytopes need exponential size extended formulations.
\newblock {\em Mathematical Programming}, 142(1-2):255--268, 2013.

\bibitem{rothvoss2014matching}
Thomas Rothvo{\ss}.
\newblock The matching polytope has exponential extension complexity.
\newblock In {\em Proceedings of the forty-sixth annual ACM symposium on Theory
  of computing}, pages 263--272. ACM, (2014).

\bibitem{schrijver2003combinatorial}
Alexander Schrijver.
\newblock {\em Combinatorial optimization: polyhedra and efficiency},
  volume~24.
\newblock Springer Science \& Business Media, 2003.

\bibitem{seymour1980decomposition}
Paul~D Seymour.
\newblock Decomposition of regular matroids.
\newblock {\em Journal of combinatorial theory, Series B}, 28(3):305--359,
  1980.

\bibitem{tamir1994polynomial}
Arie Tamir.
\newblock Polynomial formulations of min-cut problems.
\newblock {\em Manuscript, Department of Statistic and Operations Research, Tel
  Aviv University, Israel}, 1994.

\bibitem{truemper1992matroid}
Klaus Truemper.
\newblock {\em Matroid decomposition}, volume~6.
\newblock Academic Press Boston, 1992.

\bibitem{vardy1997intractability}
Alexander Vardy.
\newblock The intractability of computing the minimum distance of a code.
\newblock {\em IEEE Transactions on Information Theory}, 43(6):1757--1766,
  1997.

\bibitem{weltge2015sizes}
Stefan Weltge.
\newblock {\em Sizes of linear descriptions in combinatorial optimization}.
\newblock PhD thesis, Otto-von-Guericke-Universit{\"a}t Magdeburg, Fakult{\"a}t
  f{\"u}r Mathematik, (2015).

\bibitem{Williams01}
J.C. Williams.
\newblock A linear-size zero-one programming model for the minimum spanning
  tree problem in planar graphs.
\newblock {\em Networks}, 39:53--60, 2002.

\bibitem{wong1980integer}
Richard~T Wong.
\newblock Integer programming formulations of the traveling salesman problem.
\newblock In {\em Proceedings of the IEEE international conference of circuits
  and computers}, pages 149--152. IEEE Press Piscataway NJ, 1980.

\bibitem{yannakakis1991expressing}
Mihalis Yannakakis.
\newblock Expressing combinatorial optimization problems by linear programs.
\newblock {\em Journal of Computer and System Sciences}, 43:441--466, 1991.

\end{thebibliography}

\appendix 

\section{Proofs missing from Section~\ref{sec:asymmetric}}

\begin{proof}[Proof of Lemma~\ref{lem:3sumbases}]
Let $B$ be a basis of $M$. Thanks to Lemma~\ref{lem:3sumfacts}, $|B|=\rk(M_1)+\rk(M_2)-2$, and $B\cap E(M_i)$ is independent in $M_i$ for all $i \in [2]$. Without loss of generality, assume that $|B\cap E(M_1)| \geq |B\cap E(M_2)|$. Two cases are possible. \medskip

\noindent (i) $|B\cap E(M_1)|=\rk(M_1)$ and $|B \cap E(M_2)| = \rk(M_2)-2$. Hence, $B_1 := B\cap E(M_1)$ is a basis of $M_1$ and $B \cap E(M_2)$ is an independent set of $M_2$ with $\rk(M_2)-2$ elements. Then, there are two (distinct) elements $t_1, t_2 \in T$ such that $B_2 := (B \cap E(M_2)) + t_1 + t_2$ is a basis of $M_2$.\medskip

\noindent (ii) $|B\cap E(M_1)|=\rk(M_1)-1$ and $|B \cap E(M_2)|=\rk(M_2)-1$. Since $T$ is a triangle, there are two choices for $t \in T$ such that $(B \cap E(M_1)) + t$ is a basis of $M_1$ and two choices for $t \in T$ such that $(B \cap E(M_2)) + t$ is a basis of $M_2$. Moreover, these sets of choices are distinct since otherwise there is $t' \in T$ such that $(B \cap E(M_1)) + t'$ and $(B \cap E(M_2)) + t'$ are both dependent. But then there is a circuit $C_1$ of $M_1$ contained in $(B \cap E(M_1)) + t'$ and a circuit $C_2$ of $M_2$ contained in $(B \cap E(M_2)) + t'$ that both intersect $T$ in the single element $t'$. We see that $C_1 \Delta C_2$ is a cycle of $M$ contained in $B$, a contradiction.

Now, we prove the backward implication. 
Notice that, in both cases, $B$ has the cardinality of a basis of $M$. Towards a contradiction, assume that $B$ is not a basis. Then $B$ contains a circuit $C = C_1 \Delta C_2$ where $C_1$ and $C_2$ are cycles in $M_1$ and $M_2$ respectively, such that $C_1 \cap T = C_2 \cap T$. None of the cycles $C_1$ and $C_1 \Delta T$ can be (non-empty and) contained in $B_1$. Similarly, none of the cycles $C_2$ and $C_2 \Delta T$ can be (non-empty and) contained in $B_2$. In all cases, we get a contradiction.
\end{proof}

\begin{proof} [Proof of Lemma~\ref{lem:3sumflats}]
Thanks to Lemma~\ref{lem:3sumfacts}, and to the fact that $|F\cap T|\in\{0,1,3\}$ for a flat $F$ and a triangle $T$, we only need to prove the statements about connectedness and the rank.

First consider the case $F \subseteq E(M_i)$ for some $i \in [2]$. We only need to show that $F$ is connected in $M_i$ too. This follows for instance from the fact that cycles of $M$ contained in $F$ are exactly the cycles of $M_i$ contained in $F$. 

Now assume that $F \cap E(M_i) \neq \emptyset$ for all $i \in [2]$. From Lemma~\ref{lem:3sumfacts}, there are flats $F_1$, $F_2$ of $M_1$, $M_2$ respectively such that $F=F_1\Delta F_2$. Since $F$ is connected, we have $F_1\cap T=F_2\cap T\neq \emptyset$: indeed, consider any circuit $C$ contained in $F$ intersecting both $E_1, E_2$, it satisfies $C=C_1\Delta C_2$, with $C_i$ cycle of $M_i$ and $|C_i\cap T|=1$, for $i=1,2$, but then since $C_i\setminus T\subset F$ we must have $C_i\subset F$ as $F$ is a flat. For a similar reasoning on the circuits of $F$, we have that $F_1,F_2$ are connected. Hence we are left with two cases, according to the size of $F_i\cap T$. If $|F_i\cap T|=1$, say $F_i\cap T=\alpha$, $i=1,2$, then we claim that $M|F=M_1|F_1\oplus_2 M_2|F_2$, which implies the statement on the rank. Indeed, by definition of restriction and of 3-sum, the cycles of $M|F$ are the cycles of $M$ that are contained in $F$, and they are exactly the cycles of $M_i$ contained in $F_i$ for $i=1$ or 2, or have form $C=C_1\Delta C_2$, with $C_i$ cycle of $M_i$ and $C_i\cap T=\alpha$. The latter is the definition of $2$-sum.

Finally, if $|F_i\cap T|=3$, then arguing exactly as above we can show that $M|F=M_1|F_1\oplus_3 M_2|F_2$, which implies the statement on the rank.
\end{proof}

\section{Proving Theorem \ref{thm:seymourdecomposition3conn}}\label{sec:appendixdecomp}
In this section we elaborate on Seymour's decomposition theorem, and prove Theorem~\ref{thm:seymourdecomposition3conn}.

First, we introduce some concepts of matroid theory that will be useful in the following.
A \emph{k-separation} of a matroid on ground set $E$ is a partition $(A,B)$ of $E$ with $|A|,|B|\geq k$, and $\rk(A)+\rk(B)\leq \rk(E)+k-1$. The separation is said to be \emph{exact} if equality holds.
$k$-separations are intimately connected to $k$-sums, for $k=1,2,3$. The following is well known (see \cite{oxley2006matroid}, page 421).

\begin{lemma}\label{prop:1,2separation} 
  A matroid $M$ is a 1-sum if and only if it has a 1-separation, and is a 2-sum if and only if it has an exact 2-separation.
\end{lemma}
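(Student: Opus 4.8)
The plan is to prove the two equivalences separately, in each case handling the two directions. For the $1$-sum case, the forward direction is immediate from the definitions: if $M = M_1 \oplus_1 M_2$, then $(E(M_1), E(M_2))$ is a partition of $E$ into nonempty parts, and since every cycle of $M$ is of the form $C_1 \Delta C_2$ with $C_i$ a cycle of $M_i$, the cycles of $M$ restricted to $E(M_i)$ are exactly the cycles of $M_i$; this forces $\rk(E(M_1)) + \rk(E(M_2)) = \rk(E)$ (ranks add over a direct sum), which is a $1$-separation (indeed an exact one). For the converse, suppose $M$ has a $1$-separation $(A,B)$, i.e.\ $\rk(A) + \rk(B) \le \rk(E)$; combined with submodularity of the rank function ($\rk(A) + \rk(B) \ge \rk(A \cup B) + \rk(A \cap B) = \rk(E)$) we get equality, so $\rk$ is modular across $(A,B)$. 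One then checks that every circuit of $M$ lies entirely within $A$ or within $B$ (a circuit meeting both would violate modularity), hence $M = (M\restrict A) \oplus_1 (M\restrict B)$ with both parts nonempty.

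For the $2$-sum case, the forward direction again follows from the definition of $\oplus_2$ and Lemma~\ref{lem:3sumfacts}-style rank bookkeeping: if $M = M_1 \oplus_2 M_2$ with common element $\alpha$, set $A := E(M_1) - \alpha$ and $B := E(M_2) - \alpha$. The definition of $2$-sum requires $|E(M_i)| \ge 3$, so $|A|, |B| \ge 2$, and the rank identity $\rk(M) = \rk(M_1) + \rk(M_2) - 1$ (the analogue of Lemma~\ref{lem:3sumfacts}(i) for $2$-sums) together with $\rk_M(A) = \rk(M_1) - [\alpha \text{ is spanned by } A]$ and similarly for $B$ yields $\rk(A) + \rk(B) = \rk(E) + 1$, an exact $2$-separation; here one uses that $\alpha$ is neither a loop nor a coloop of $M_1$ or $M_2$ so that $\rk_{M_i}(E(M_i) - \alpha) = \rk(M_i)$. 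For the converse, given an exact $2$-separation $(A,B)$, one invokes the standard matroid-theoretic construction: exactness means $\rk(A) + \rk(B) = \rk(E) + 1$, and one shows (this is the content of the classical result, e.g.\ Oxley \S8.3) that there is a binary matroid $M_1$ on $A + \alpha$ and $M_2$ on $B + \alpha$, with $\alpha$ a newly added element that is non-loop non-coloop in each and $|E(M_i)| \ge 3$, such that $M = M_1 \oplus_2 M_2$; concretely, in a representation of $M$ one takes the submatrices indexed by $A$ and $B$ and adjoins to each a column realizing the rank-$1$ "interaction space" between the two sides, which by exactness is one-dimensional.

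The main obstacle is the converse direction of the $2$-sum equivalence: the $1$-sum converse is an elementary "circuits don't cross" argument, but building the two parts $M_1, M_2$ of the $2$-sum from an abstract exact $2$-separation requires the structural fact that the "connecting" rank information across the separation is $1$-dimensional and can be encoded by a single shared element $\alpha$. I would handle this by citing the standard treatment (Oxley~\cite{oxley2006matroid}, around page 421, as already referenced) rather than reproving it, since Lemma~\ref{prop:1,2separation} is stated precisely as a known result used to drive the decomposition machinery; the role of the lemma here is organizational, translating Seymour's connectivity language into the $t$-sum language used elsewhere in the paper. I would also note explicitly that we only need these equivalences for \emph{binary} (indeed regular) matroids, which simplifies the representability bookkeeping, since everything can be carried out over $\mathbb{F}_2$.
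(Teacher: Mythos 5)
Your proposal is correct and matches the paper's treatment: the paper does not prove Lemma~\ref{prop:1,2separation} at all, stating it as well known and citing \cite{oxley2006matroid} (page 421), so your reliance on the standard reference for the one genuinely structural step (constructing $M_1,M_2$ from an exact $2$-separation) is exactly the paper's level of detail, while the directions you write out (both directions for $1$-sums, and the computation that a $2$-sum yields an exact $2$-separation via $\rk(M_1\oplus_2 M_2)=\rk(M_1)+\rk(M_2)-1$ and the non-loop, non-coloop condition on $\alpha$) are sound. One nitpick: the bracketed correction term in $\rk_M(A)=\rk(M_1)-[\alpha \text{ spanned by } A]$ is unnecessary and slightly misleading---since $\alpha$ is not a coloop of $M_1$ one has directly $\rk_M(A)=\rk_{M_1}(E(M_1)-\alpha)=\rk(M_1)$, which is the justification you in fact give at the end of that sentence.
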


Notice that, although the previous lemma applies to all matroids, we are only concerned with binary matroids.

In light of Lemma \ref{prop:1,2separation}, we have that a matroid is connected if and only if it has no 1-separation, and 3-connected if and only if it has no 2-separation or 1-separation.
We will be mainly concerned with the case $k=3$, which is slightly more delicate. For simplicity, we will call \emph{non-trivial} an exact 3-separation $(A,B)$ with $|A|, |B|\geq 4$. The next lemma essentially states that for a binary matroid being a 3-sum is equivalent to having a non-trivial 3-separation.

\begin{lemma}[Proposition 12.4.17 of \cite{oxley2006matroid}]\label{prop:3separation}
  If a binary matroid $M$ has a non-trivial 3-separation $(A,B)$ then there are two binary matroids $M_1$, $M_2$ with $E(M_1) = A\cup T$, $E(M_2) = B\cup T$, where $T$ is a triangle of both $M_1$ and $M_2$, such that $M=M_1\oplus_3 M_2$. On the other hand, if $M$, $M_1$, $M_2$ are binary matroids such that $M=M_1\oplus_3 M_2,$ then $(E(M) \cap E(M_1), E(M) \cap E(M_2))$ is a non-trivial 3-separation of $M$.
\end{lemma}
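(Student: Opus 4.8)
The plan is to prove the two implications separately; the reverse one is a short rank computation, while the forward one --- the substantive part --- I would handle through binary representations.

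\emph{From a $3$-sum to a non-trivial $3$-separation.} Suppose $M = M_1 \oplus_3 M_2$ with common triangle $T = E(M_1) \cap E(M_2)$, and set $A := E(M_1) \setminus T$ and $B := E(M_2) \setminus T$. These partition $E(M) = E(M_1) \Delta E(M_2)$, and $|A| = |E(M_1)| - 3 \geq 4$, $|B| \geq 4$. By Lemma~\ref{lem:3sumfacts}(i), $\rk(M) = \rk(M_1) + \rk(M_2) - 2$. The definition of a $3$-sum forces $T$ to contain no cocircuit of $M_i$, i.e.\ $E(M_i) \setminus T$ is spanning in $M_i$; combined with Lemma~\ref{lem:3sumfacts}(iv) (so $M \restrict A = M_1 \restrict A$ and $M \restrict B = M_2 \restrict B$) this gives $\rk(A) = \rk(M_1)$ and $\rk(B) = \rk(M_2)$, hence $\rk(A) + \rk(B) = \rk(M) + 2$. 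Thus $(A,B)$ is an exact $3$-separation, non-trivial by the size bounds.

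\emph{From a non-trivial $3$-separation to a $3$-sum.} Here I would pass to the cocycle space $V \subseteq \mathbb{F}_2^{E(M)}$ of $M$ and the splitting $\mathbb{F}_2^{E(M)} = \mathbb{F}_2^A \oplus \mathbb{F}_2^B$. Write $W_A := \pi_A(V)$ and $W_A^0 := V \cap \mathbb{F}_2^A$, which are the cocycle spaces of $M \restrict A$ and of $M/B$ and satisfy $W_A^0 \subseteq W_A$; define $W_B^0 \subseteq W_B$ symmetrically. Exactness of the separation translates into $W_A^0 \oplus W_B^0 \subseteq V$ with $\dim\big(V/(W_A^0 \oplus W_B^0)\big) = 2$, and the map $v \mapsto \pi_A(v) + W_A^0$ induces an isomorphism $V/(W_A^0 \oplus W_B^0) \cong W_A / W_A^0$, and symmetrically onto $W_B/W_B^0$. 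So there is one $2$-dimensional ``linking space'' $L$ canonically identified with both $W_A/W_A^0$ and $W_B/W_B^0$. Over $\mathbb{F}_2$ a plane has exactly three nonzero vectors and they sum to zero, so after choosing $\tilde v_1, \tilde v_2 \in V$ lifting a basis of $L$, I would take a new triangle $T = \{\alpha,\beta,\gamma\}$ and let $M_1$ be the binary matroid on $A \cup T$ given by the $\mathbb{F}_2$-matrix with rows a basis of $W_A^0$ followed by $\pi_A(\tilde v_1), \pi_A(\tilde v_2)$ (so that its $A$-columns represent $M \restrict A$) and $T$-columns $(0,\dots,0,1,0)$, $(0,\dots,0,0,1)$, $(0,\dots,0,1,1)$; let $M_2$ on $B \cup T$ be defined symmetrically, using the \emph{same} $T$ with the \emph{same} roles for $\tilde v_1, \tilde v_2$.

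It then remains to verify four things: (a) $T$ is a triangle of $M_1$ and of $M_2$ --- its three $T$-columns span a rank-$2$ space, are pairwise independent, and sum to zero; (b) $T$ contains no cocircuit of $M_i$, i.e.\ $A$ (resp.\ $B$) is spanning, which holds since by construction $\rk(M_1) = \dim W_A = \rk(A)$ and likewise on the other side; (c) $|E(M_i)| \geq 7$, from $|A|, |B| \geq 4$; and (d) $M = M_1 \oplus_3 M_2$, which after dualizing reduces to the assertion that the cycle space of $M_1 \Delta M_2$, namely $\{C_1 \Delta C_2 : C_i \text{ a cycle of } M_i,\ C_1 \cap T = C_2 \cap T\}$, coincides with that of $M$ --- concretely, a cocycle $v \in V$ written as $w_A + w_B + \mu_1 \tilde v_1 + \mu_2 \tilde v_2$ restricts on $A$ and on $B$ to cocycles of $M_1$, $M_2$ agreeing on $T$ with common pattern recorded by $(\mu_1,\mu_2)$, and conversely. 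The main obstacle is exactly the fact on which this construction rests: that the $2$-dimensional linking quotient is genuinely \emph{shared} by the two sides, so that one may glue the \emph{same} triangle to $A$ and to $B$; this is where exactness of the separation and the rigidity of $\mathbb{F}_2$ (a unique triangle structure) are essential, and putting a representing matrix of $M$ into a ``$3$-separation normal form'' makes it transparent but is the delicate step. Since the statement is classical, one may alternatively just invoke Seymour~\cite{seymour1980decomposition} or Oxley~\cite{oxley2006matroid}, from which it is quoted here.
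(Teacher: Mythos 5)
The paper never proves this lemma: it is quoted as Proposition 12.4.17 of Oxley's book (just as Lemma~\ref{lem:3sumfacts} is), so your proposal is a genuinely different route, namely an actual proof from binary representations rather than a citation. Your easy direction ($3$-sum $\Rightarrow$ non-trivial exact $3$-separation) is complete and correct: ``$T$ contains no cocircuit of $M_i$'' is indeed equivalent to $E(M_i)\setminus T$ being spanning in $M_i$, and combined with $\rk(M)=\rk(M_1)+\rk(M_2)-2$ and $M\restrict A=M_1\restrict A$, $M\restrict B=M_2\restrict B$ it gives $\rk(A)+\rk(B)=\rk(M)+2$ with $|A|,|B|\geq 4$. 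The hard direction follows the same strategy as the textbook proofs behind Oxley's proposition (a ``$3$-separation normal form'' of a representing matrix): the exactness of the separation makes the linking quotient $V/(W_A^0\oplus W_B^0)\cong W_A/W_A^0\cong W_B/W_B^0$ two-dimensional and shared by both sides, which is exactly what lets you attach the \emph{same} triangle to $A$ and to $B$; your checks (a)--(c) go through as stated.

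The one place where your sketch still needs to be closed is (d). Your verification shows that the cocycle space $V$ of $M$ equals the set of glued compatible cocycle pairs of $M_1,M_2$; to conclude $M=M_1\Delta M_2$ (whose definition is in terms of \emph{cycles}) you must also know that this glued space is precisely the orthogonal complement of the cycle space of $M_1\Delta M_2$. Orthogonality is immediate (the $T$-contributions cancel in pairs), and equality follows from a dimension count which uses exactly the properties you built in: the gluing map $\{(z_1,z_2)\in Z_1\times Z_2 \mid \pi_T(z_1)=\pi_T(z_2)\}\to\mathbb{F}_2^{A\cup B}$ has kernel generated by $(\chi^T,\chi^T)$ because $T$ is a circuit of each $M_i$, and its domain has dimension $\dim Z_1+\dim Z_2-3$ because $A$ (resp.\ $B$) is spanning in $M_i$, so each cycle space projects onto all of $\mathbb{F}_2^{T}$; this gives the cycle space of $M_1\Delta M_2$ dimension $|A|+|B|-\rk(M)$, matching $\dim V^{\perp}$. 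With that supplied the argument is complete; as written it is a sound sketch with the delicate step correctly identified, and your fallback of simply invoking Seymour or Oxley is exactly what the paper itself does.
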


A matroid $N$ is a \emph{minor} of a matroid $M$ if $N=M/X\setminus Y$ for some $X, Y$, i.e.\ if $N$ can be obtained from $M$ through a sequence of deletions and contractions. The class of regular matroids (as many others) is closed under taking minors. It is not hard to show that, if $M$ is the 1- or 2-sum of $M_1, M_2$, then $M_1, M_2$ are isomorphic to minors of $M$. One of the main results of \cite{seymour1980decomposition} states that the same is true if $M$ is the 3-sum of $M_1, M_2$, provided that $M$ is 3-connected. Moreover, in \cite{seymour1980decomposition} two special regular matroids are defined, $R_{10}$ and $R_{12}$ on 10 and 12 elements respectively, with the following properties.

\begin{lemma}\label{prop:R10,R12}
 \begin{enumerate}
     \item If a regular matroid (different from $R_{10}$) contains $R_{10}$ as a minor, then it has a 1- or 2-separation.
     \item If a regular matroid has $R_{12}$ as a minor, then it has a non-trivial 3-separation.
     \item If a matroid is regular, 3-connected, and is not graphic, cographic or $R_{10},$ then it has $R_{12}$ as a minor.
 \end{enumerate} 
\end{lemma}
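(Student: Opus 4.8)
The three statements (1)--(3) are the technical heart of Seymour's decomposition theorem, and the plan is to follow his route \cite{seymour1980decomposition} (see also \cite{oxley2006matroid}), isolating the genuinely combinatorial content of each part. For (1), the plan is to invoke the \emph{Splitter Theorem}: if $M$ is $3$-connected and has a $3$-connected proper minor $N$ with $|E(N)| \geq 4$, and one is not in the exceptional wheel/whirl situation, then there is a chain $N = M_0, M_1, \ldots, M_n = M$ of $3$-connected matroids, each a minor of the next, with $|E(M_{i+1})| = |E(M_i)| + 1$. Since $R_{10}$ is neither a wheel nor a whirl, it then suffices to check that $R_{10}$ has \emph{no} $3$-connected regular single-element extension or coextension: this is a finite verification (enumerate, up to the automorphism group of $R_{10}$, the $0/1$ columns one may append to a fixed representing matrix of $R_{10}$, discarding those that fail to be totally unimodular or that destroy $3$-connectivity, and dually for coextensions). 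Given this, every chain as above has length $0$ when all the $M_i$ are regular, so a $3$-connected regular matroid with an $R_{10}$-minor is $R_{10}$. The contrapositive is exactly (1): a regular matroid different from $R_{10}$ having $R_{10}$ as a minor is not $3$-connected, hence by Lemma~\ref{prop:1,2separation} has a $1$- or $2$-separation.

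For (2), the plan is to use the explicit exact $3$-separation $(X,Y)$ of $R_{12}$ (with $|X| = |Y| = 6$) and show that it is \emph{robust under minor operations}: if $R_{12} = M / C \setminus D$, then assigning each element of $C \cup D$ to the side of $(X,Y)$ to which it naturally belongs yields a partition $(X',Y')$ of $E(M)$ that is again an exact $3$-separation with $|X'|, |Y'| \geq 4$. The point is that both $R_{12}|X$ and $R_{12}|Y$ are connected enough (the separation is ``internal''), so restoring deleted and contracted elements can only raise the two side-ranks in a controlled way; a rank count through the contractions and deletions gives $\rk_M(X') + \rk_M(Y') \leq \rk(M) + 2$. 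This produces the desired non-trivial $3$-separation of $M$.

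For (3) --- the deep part --- the plan is to prove that the minor-minimal $3$-connected regular matroids that are neither graphic nor cographic are exactly $R_{10}$ and $R_{12}$. Let $M$ be $3$-connected, regular, neither graphic nor cographic, and $\neq R_{10}$, and let $N$ be a minor-minimal minor of $M$ that is neither graphic nor cographic; a standard argument (using that graphic and cographic matroids are closed under $1$- and $2$-sums, together with Lemma~\ref{prop:1,2separation}) reduces to the case that $N$ is $3$-connected. By Tutte's excluded-minor characterization, a binary matroid is graphic iff it has no minor among $F_7, F_7^*, M^*(K_5), M^*(K_{3,3})$; since $N$ is regular it has none of $F_7, F_7^*$, so being non-graphic it has an $M^*(K_5)$- or $M^*(K_{3,3})$-minor, and dually (non-cographic) an $M(K_5)$- or $M(K_{3,3})$-minor. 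The bulk of the work is then a case analysis of how such a \emph{graphic} minor (a copy of $M(K_5)$ or $M(K_{3,3})$) sits inside $N$: minimality of $N$ forces the elements of $N$ outside this minor to arise by a very restricted family of single-element extensions/coextensions, and checking total unimodularity and $3$-connectivity along the way pins $N$ down to $R_{10}$ or $R_{12}$ (any other outcome is already graphic or cographic, contradicting the choice of $N$). Finally, if $N = R_{10}$ then part (1) forces $M = R_{10}$, a contradiction; hence $N = R_{12}$ and $M$ has an $R_{12}$-minor.

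The main obstacle is squarely part (3): the case analysis anchored on an $M(K_5)$- or $M(K_{3,3})$-minor is long and is essentially where \cite{seymour1980decomposition} spends most of its effort. Parts (1) and (2) are, by contrast, bounded-complexity verifications once the Splitter Theorem and the explicit descriptions of $R_{10}$ and $R_{12}$ are in hand. For the purposes of the present paper it is legitimate to cite (1)--(3) directly from \cite{seymour1980decomposition} or \cite{oxley2006matroid}; the outline above indicates how one would reconstruct the arguments if needed.
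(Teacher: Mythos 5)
The paper does not prove this lemma at all: it is stated as a direct citation of Seymour's decomposition paper \cite{seymour1980decomposition}, and your bottom line --- that for the purposes of this paper one simply cites (1)--(3) --- is exactly what the authors do. Your supplementary outline is faithful to the standard arguments for parts (1) and (2): part (1) really is the Splitter Theorem plus the finite check that $R_{10}$ has no $3$-connected regular single-element extension or coextension (so $R_{10}$ is a splitter for regular matroids), and part (2) really is the statement that the canonical exact $3$-separation of an $R_{12}$-minor induces a non-trivial $3$-separation of the host matroid (Seymour's argument there is an induction over single-element extensions/coextensions, which is somewhat more delicate than your one-line rank count, but the idea is right).

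There is, however, a genuine flaw in your sketch of part (3). You claim that a minor-minimal minor $N$ of $M$ that is neither graphic nor cographic can be assumed $3$-connected ``since graphic and cographic matroids are closed under $1$- and $2$-sums.'' That closure only rules out the case where both parts of a $2$-sum are graphic, or both are cographic. It does not exclude $N = N_1 \oplus_2 N_2$ with $N_1$ graphic but not cographic and $N_2$ cographic but not graphic: such a $2$-sum is neither graphic nor cographic while both parts (and, plausibly, all proper minors) are graphic or cographic --- e.g.\ the $2$-sum of $M(K_5)$ with $M^*(K_{3,3})$ is of this type. So the reduction to $3$-connected $N$ fails as stated, and this mixed case is precisely where a minimality-based approach gets stuck; Seymour's actual proof of (14.3) works with the $3$-connected matroid $M$ itself rather than descending to a minor-minimal non-(co)graphic minor. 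Since you ultimately defer to the citation (as the paper does), this does not undermine the use of the lemma, but the outline of (3) should not be presented as a viable reconstruction without repairing that step.
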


This gives a way to iteratively decompose a regular matroid $M$: if it is not already a ``basic" matroid, i.e.\ graphic, cographic, or isomorphic to $R_{10}$, then $M$ has either a 1- or 2- separation, or a non-trivial 3-separation, hence it can be expressed as a 1-, 2-, or 3-sum of some smaller matroids. Such matroids are minors of $M$, hence are regular, and can be further decomposed until all matroids obtained are basic. 
   
The resulting decomposition process can be described by a ``decomposition tree" in a natural way, where the nodes are basic matroids and the edges represent the operations between them. However, for simplicity we would like to consider a decomposition which involves 3-sums only, as described in Theorem~\ref{thm:seymourdecomposition3conn}. 

For this reason we start from a 3-connected regular matroid $M$ that is not $R_{10}$. From Lemma~\ref{prop:R10,R12} such $M$ cannot have $R_{10}$ as a minor, hence none of its minors (in particular the matroids that we will meet during the decomposition process) can. If $M$ is not graphic or cographic, we write it as $M = M_1 \oplus_3 M_2$. Now, consider $M_i$ for $i \in [2]$. If it is graphic or cographic, we can stop decomposing. Otherwise, we can decompose $M_i$  further as a $k$-sum for some $k \in [3]$. If $M_i$ is not 3-connected, we might need to decompose it as a 1- or 2-sum. We argue that this never happens, thanks to the following lemma. Recall that two elements of a matroid are \emph{parallel} if they form a circuit of size 2.

\begin{lemma}[(4.3) in \cite{seymour1980decomposition}]\label{prop:not3conn}
  Suppose that $M$ is a 3-connected binary matroid, and $M=M_1\oplus_3 M_2$, where $M_1, M_2$ share a triangle $T$. If $(A,B)$ is a 2-separation of $M_i$ for $i \in [2]$ with $|A|\leq |B|$, then $A$ consists of two parallel elements $\alpha,\alpha'$, with $\alpha\in T$ and $\alpha' \not\in T$.
\end{lemma}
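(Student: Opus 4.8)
Assume $i=1$, so $(A,B)$ is a $2$-separation of $M_1$ with $2\le|A|\le|B|$, and write $T=\{\alpha,\beta,\gamma\}$. The plan is to show that, unless $(A,B)$ has the special form in the statement, one can lift $(A,B)$ to a $1$- or $2$-separation of $M$ itself, contradicting the $3$-connectivity of $M$. Two facts are used throughout. First, since $T$ is a triangle containing no cocircuit of $M_j$, the set $E(M_j)\setminus T$ spans $M_j$: otherwise its closure would lie in a hyperplane $H$, and $E(M_j)\setminus H\subseteq T$ would be a cocircuit contained in $T$. Second, each element of $T$ lies in the closure of the other two, so moving an element $t\in T$ to the side of a partition of $E(M_1)$ that already contains $T\setminus\{t\}$ does not increase the connectivity function $\lambda_{M_1}(X):=\rk_{M_1}(X)+\rk_{M_1}(E(M_1)\setminus X)-\rk(M_1)$.

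The key ingredient is a \emph{lifting claim}: if $M_1$ admits a partition $(P,Q)$ with $|P|,|Q|\ge 2$, $\lambda_{M_1}(P)\le 1$ and $T\subseteq Q$, then $(P,E(M)\setminus P)$ is a $1$- or $2$-separation of $M$. To prove it, note that $P\subseteq E(M_1)\setminus T\subseteq E(M)$, so $\rk_M(P)=\rk_{M_1}(P)$ by restriction and $M\delete P=(M_1\delete P)\,\Delta\, M_2$ (the two matroids have the same cycles). I would then establish the general bound
$$
\rk(N_1\,\Delta\,N_2)\ \le\ \rk(N_1)+\rk(N_2)-2
$$
for any binary matroids $N_1,N_2$ sharing a triangle $T$: writing $\mathcal{Z}(N)$ for the binary cycle space of a binary matroid $N$ and $\mathcal{Z}(N)|_T$ for its projection onto the $T$-coordinates, a dimension count using that the only cycles of $N_j$ contained in $T$ are $\emptyset$ and $T$ gives $\rk(N_1\,\Delta\,N_2)=\rk(N_1)+\rk(N_2)-5+\dim\big(\mathcal{Z}(N_1)|_T+\mathcal{Z}(N_2)|_T\big)$, and the last term is at most $\dim\mathbb{F}_2^T=3$. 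Applying this with $N_1=M_1\delete P$ (which has ground set $Q$) and $N_2=M_2$, and using $\rk(M)=\rk(M_1)+\rk(M_2)-2$, yields $\lambda_M(P)=\rk_{M_1}(P)+\rk(M\delete P)-\rk(M)\le\rk_{M_1}(P)+\rk_{M_1}(Q)-\rk(M_1)=\lambda_{M_1}(P)\le 1$. Since $|E(M)\setminus P|\ge|E(M_2)|-3\ge 4$, this is indeed a $1$- or $2$-separation of $M$.

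Granting the lifting claim, I finish with a short case analysis on $A\cap T$. If $A\cap T=\emptyset$, apply the claim to $(P,Q)=(A,B)$; if $T\subseteq A$, apply it to $(P,Q)=(B,A)$ (here $|B|\ge|A|\ge 3$): both are impossible, so $1\le|A\cap T|\le 2$. If in addition $|A|\ge 3$, I move one triangle element to obtain a partition with $T$ on one side, $\lambda_{M_1}$ not larger, and both sides of size at least $2$: namely $(A\setminus\{\alpha\},B\cup\{\alpha\})$ when $A\cap T=\{\alpha\}$, and $(A\cup\{\gamma\},B\setminus\{\gamma\})$ when $A\cap T=\{\alpha,\beta\}$; the lifting claim then again gives a contradiction. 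Hence $|A|=2$. If $|A\cap T|=2$ then $A=\{\alpha,\beta\}$ has rank $2$ (a triangle contains no parallel pair), so $\rk_{M_1}(B)\le\rk(M_1)-1$; but $B\supseteq E(M_1)\setminus T$ spans $M_1$, a contradiction. Therefore $|A\cap T|=1$, say $A=\{\alpha,\alpha'\}$ with $\alpha\in T$ and $\alpha'\notin T$. If $\alpha'$ were not parallel to $\alpha$ then $\rk_{M_1}(A)=2$; since $\alpha\in\cl_{M_1}(T\setminus\{\alpha\})\subseteq\cl_{M_1}(B)$ we would get $\rk_{M_1}(E(M_1)\setminus\{\alpha'\})=\rk_{M_1}(B)\le\rk(M_1)-1$, i.e.\ $\alpha'$ is a coloop of $M_1$; but then, by Lemma~\ref{lem:3sumfacts}(iii), no circuit of $M=M_1\oplus_3 M_2$ contains $\alpha'$, so $\alpha'$ is a coloop of $M$, while the connected matroid $M$ (having at least two elements) has none -- a contradiction. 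Thus $\alpha'$ is parallel to $\alpha$, which is the assertion.

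The only step that is not routine bookkeeping with $\lambda$ is the $\Delta$-sum rank inequality used in the lifting claim; I expect the main care there to be the cycle-space dimension count (this is precisely where the triangle hypothesis is needed), and elsewhere to be in checking that every partition produced during the reductions has both sides of size at least $2$, which relies on $|E(M_1)|,|E(M_2)|\ge 7$.
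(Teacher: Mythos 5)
Your proof is correct, but note that the paper does not prove this lemma at all: it is quoted, with attribution, as statement (4.3) of Seymour's decomposition paper, so your argument is necessarily an independent, self-contained route. I checked the key steps and they hold. The lifting claim is the heart: for $P\subseteq E(M_1)\setminus T$ one indeed has $\rk_M(P)=\rk_{M_1}(P)$ (Lemma~\ref{lem:3sumfacts}(iv)) and $M\delete P=(M_1\delete P)\,\Delta\,M_2$ by comparing cycles, and your cycle-space count $\rk(N_1\Delta N_2)=\rk(N_1)+\rk(N_2)-5+\dim\bigl(\mathcal{Z}(N_1)|_T+\mathcal{Z}(N_2)|_T\bigr)\leq\rk(N_1)+\rk(N_2)-2$ is right; it is a good catch that only the inequality is available here, since $M_1\delete P$ need not satisfy the cocircuit or cardinality conditions of a genuine $3$-sum, whereas the exact value $\rk(M)=\rk(M_1)+\rk(M_2)-2$ is taken from Lemma~\ref{lem:3sumfacts}(i). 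Combined with Lemma~\ref{prop:1,2separation}, $\lambda_M(P)\leq\lambda_{M_1}(P)\leq 1$ with both sides of size at least $2$ contradicts $3$-connectivity of $M$, and your case analysis (using that $E(M_1)\setminus T$ spans $M_1$ because $T$ contains no cocircuit, that a triangle contains no parallel pair, and that a coloop of $M_1$ outside $T$ would be a coloop of the connected matroid $M$) correctly forces $|A|=2$, $|A\cap T|=1$ and the parallel pair. Two minor points worth making explicit: the kernel of your map has dimension exactly $1$ because $T$ remains a circuit of $M_1\delete P$ (as $T\cap P=\emptyset$), so its only cycles inside $T$ are $\emptyset$ and $T$; and the step ``$\alpha'$ not parallel to $\alpha$ implies $\rk_{M_1}(A)=2$'' uses the paper's blanket looplessness assumption (without it the statement is false, e.g.\ $A$ a loop plus a triangle element), though one can also argue directly that a loop of $M_1$ would be a loop of $M$, contradicting $3$-connectivity. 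What your approach buys is a proof from first principles via binary cycle spaces, avoiding any appeal to Seymour's machinery on exact separations; what the citation buys the paper is brevity.
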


This implies that, although $M_i$ might not be 3-connected, it is close to being 3-connected: deleting repeated elements results in a 3-connected matroid. Recall that the \emph{simplification} $\si(M)$ is the matroid obtained from $M$ by removing all loops and deleting elements until no two elements are parallel. Clearly, $\rk(M)=\rk(\si(M))$. We say that a binary matroid is \emph{almost 3-connected} if its simplification is 3-connected. 

\begin{lemma}\label{lem:almost3con}
Let $M$ be an almost $3$-connected regular matroid that is not graphic, cographic and has no minor isomorphic to $R_{10}$. Then $M = M_1 \oplus_3 M_2$, where $M_1, M_2$ are isomorphic to minors of $M$, and are almost $3$-connected.  
\end{lemma}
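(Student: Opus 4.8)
The plan is to prove Lemma~\ref{lem:almost3con} by combining Seymour's decomposition machinery (Lemma~\ref{prop:R10,R12}) with the ``near 3-connectivity'' statement of Lemma~\ref{prop:not3conn}. First I would reduce to the $3$-connected case: since $M$ is almost $3$-connected, $\si(M)$ is $3$-connected, and $\rk(\si(M))=\rk(M)$. Moreover $\si(M)$ is regular (it is a minor of $M$, obtained by deletions), it is still not graphic and not cographic (being graphic/cographic is closed under adding parallel elements, so if $\si(M)$ were graphic/cographic then so would $M$ be), and it still has no $R_{10}$-minor (as $\si(M)$ is a minor of $M$). Also $\si(M)$ cannot be $R_{10}$ itself, because $R_{10}$ has no parallel elements, so $\si(M)=R_{10}$ would force $M$ to contain $R_{10}$ as a minor, contrary to hypothesis.

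Next I would apply the decomposition results to $N:=\si(M)$. By Lemma~\ref{prop:R10,R12}(3), since $N$ is regular, $3$-connected, and not graphic, cographic or $R_{10}$, it has an $R_{12}$-minor, hence by Lemma~\ref{prop:R10,R12}(2) it has a non-trivial $3$-separation $(A,B)$. By Lemma~\ref{prop:3separation} this yields binary matroids $N_1$, $N_2$ with $E(N_1)=A\cup T$, $E(N_2)=B\cup T$ for a triangle $T$, and $N=N_1\oplus_3 N_2$. Both $N_1$ and $N_2$ are isomorphic to minors of $N$ (here I invoke the result of \cite{seymour1980decomposition}, mentioned in the text, that the parts of a $3$-sum of a $3$-connected matroid are minors of it), hence minors of $M$, hence regular.

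The remaining work is to climb back from the decomposition $N=N_1\oplus_3 N_2$ to a decomposition $M=M_1\oplus_3 M_2$ with the claimed properties. The triangle $T$ of $N=\si(M)$ is a triangle of $M$ as well (a circuit of $\si(M)$ remains a circuit of $M$, and it contains no cocircuit of $M$ since it contains none of $N$ and adding parallels only enlarges cocircuits away from $T$ — this small point needs a line). I would put each parallel class of $M$ entirely on the side ($M_1$ or $M_2$) containing its surviving representative in $N_i$, and keep $T$ on both sides; concretely, let $M_1$ be the restriction of $M$ to $(E(M_1)\cap E(M))\cup T$ and similarly for $M_2$, checking that $M=M_1\oplus_3 M_2$ via the cycle characterization of the $3$-sum, and that $\si(M_i)=N_i$ so each $M_i$ is almost $3$-connected. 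Each $M_i$ is a minor of $M$ (a restriction of $M$, together with possibly one extra triangle element which is a parallel of an existing one, hence obtainable), and regular. The main obstacle I expect is precisely this bookkeeping: verifying that redistributing parallel elements across the two sides of $N$'s $3$-separation produces a genuine $3$-sum decomposition of $M$ (in particular that $|E(M_i)|\ge 7$, which follows since $|A|,|B|\ge 4$ forces the non-trivial separation of $N$ and parallels only help, and that $T$ contains no cocircuit of $M_i$), rather than any deep structural difficulty.
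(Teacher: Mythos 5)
Your first half is exactly the paper's route (pass to $\si(M)$, invoke Lemma~\ref{prop:R10,R12} to get an $R_{12}$-minor and hence a non-trivial $3$-separation, apply Lemma~\ref{prop:3separation} to write $\si(M)=N_1\oplus_3 N_2$, get that $N_1,N_2$ are minors of $\si(M)$ and, via Lemma~\ref{prop:not3conn}, almost $3$-connected). The genuine gap is in the lifting step, and it stems from a misconception about $3$-sums: the triangle $T$ produced by Lemma~\ref{prop:3separation} is \emph{not} a subset of $E(\si(M))$. By definition of the $\Delta$-sum, $E(N_1)\cap E(N_2)=T$ while $E(\si(M))=E(N_1)\,\Delta\,E(N_2)$, so $T$ is disjoint from $E(\si(M))$ (and from $E(M)$). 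Hence your claim that ``$T$ is a triangle of $\si(M)$, hence of $M$'' is false, and your concrete definition of $M_1$ as ``the restriction of $M$ to $(E(M_1)\cap E(M))\cup T$'' does not parse: the $T$-elements are not elements of $M$, and, more substantively, a part of a $3$-sum is never just a restriction of the sum --- $M\restrict A$ forgets how $A$ attaches across the separation, which is precisely the information the triangle encodes. Consequently the ``cycle characterization check'' you propose cannot be carried out with that definition, and the side remark about $T$ containing no cocircuit of $M$ is moot.

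The repair is what the paper actually does: lift the \emph{separation}, not the sum. Let $(A',B')$ be the non-trivial $3$-separation of $\si(M)$ underlying $\si(M)=N_1\oplus_3 N_2$, and let $A$ (resp.\ $B$) be $A'$ (resp.\ $B'$) together with all elements of $M$ parallel to one of its elements; since ranks are unchanged and sizes only grow, $(A,B)$ is again a non-trivial exact $3$-separation of $M$, and applying Lemma~\ref{prop:3separation} to $M$ itself yields $M=M_1\oplus_3 M_2$. Almost $3$-connectivity of the $M_i$ then follows because deleting the re-attached parallel elements from $M_i$ gives back $N_i$, which is almost $3$-connected; note here a second small slip in your write-up: you cannot expect $\si(M_i)=N_i$, since by Lemma~\ref{prop:not3conn} $N_i$ may itself contain a parallel pair meeting the triangle --- what you need is $\si(M_i)\cong\si(N_i)$ $3$-connected, i.e.\ $N_i$ almost $3$-connected, which is exactly Lemma~\ref{lem:almost3con3sum} applied to the $3$-connected matroid $\si(M)$. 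Finally, for the minor claim, your justification again leans on the false ``restriction plus one parallel triangle element'' picture; the correct argument is that $N_i$ is a minor of $\si(M)$ (Seymour's theorem for the $3$-connected case), hence of $M$, and $M_i$ is obtained from $N_i$ by adding elements that are parallel in $M$ to elements of $N_i$, so the same minor operations performed in $M$, keeping those parallel copies, produce $M_i$.
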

\begin{proof}
As adding parallel elements to a (co)graphic matroid leaves it (co)graphic, we have that $\si(M)$ is not graphic or cographic. Moreover, $\si(M)$ cannot have $R_{10}$ as a minor. Hence, applying parts 2 and 3 of Lemma \ref{prop:R10,R12}, we have that $\si(M)$ has a non-trivial 3-separation $(A',B')$, corresponding to a 3-sum $\si(M)=M_1'\oplus_3 M_2'$. As $\si(M)$ is 3-connected, one can see, as a simple consequence of Lemma \ref{prop:not3conn}, that $M_1', M_2'$ are almost 3-connected: (for a proof see Lemma \ref{lem:almost3con3sum} below, where a stronger statement is proved). Moreover $M_1'$ and $M_2'$ are isomorphic to minors of $\si(M)$, hence of $M$. Let $A$ be obtained by adding to $A'$ all the elements of $E(M)$ that are parallel to some of $A'$, and define $B$ similarly. Clearly $(A,B)$ is a non-trivial 3-separation of $M$, and consider the corresponding $M_1, M_2$ satisfying $M=M_1\oplus_3 M_2$. For $i=1,2,$ $M_i$ is almost 3-connected, as by removing some parallel elements from $M_1$ we obtain $M_1'$; and $M_i$ is a minor of $M$, for the same reason.
\end{proof}

Now the proof of Theorem \ref{thm:seymourdecomposition3conn} follows, except for a last technicality which we now describe informally. Assume we first decompose our $M$ as $M_1\oplus M_2$, with $T$ being the common triangle, and that $M_2$ has a 3-separation $(A,B)$ ``crossing" $T$: say that two elements of $T$ are in $A$ and one is in $B$. Instead of using the separation $(A,B)$ to decompose $M_2$, we modify the separation in order to obtain another 3-separation that does not cross $T$ by moving the element in $T\cap B$ to $A$. We use the next fact that easily follows from the proof of Lemma~11.3.17 in~\cite{truemper1992matroid}. 

\begin{lemma}\label{prop:3separationtriangles}
Let $M$ be regular matroid with $R_{12}$ as a minor. Let $T_1,\dots,T_k$ be mutually disjoint triangles of $M$. Then $M$ has a non-trivial 3-separation $(A,B)$ such that each $T_i$ is contained in one of $A$ or $B$.
\end{lemma}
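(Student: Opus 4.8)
The plan is to start from a non-trivial $3$-separation of $M$, which exists by part~2 of Lemma~\ref{prop:R10,R12}, and then to \emph{uncross} the triangles $T_1,\dots,T_k$ one at a time by a local exchange argument on the separation. Recall the connectivity function $\lambda(X) := \rk(X) + \rk(E\setminus X) - \rk(E)$: a partition $(A,B)$ is an exact $3$-separation iff $\lambda(A) = 2$, and it is non-trivial iff moreover $|A|,|B| \ge 4$. The engine behind all such uncrossing arguments is submodularity of $\lambda$, but the exchange step we need only uses a closure computation.

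Among all non-trivial $3$-separations $(A,B)$ of $M$, choose one for which the number of \emph{crossed} triangles — those $T_i$ with $T_i\cap A\neq\emptyset\neq T_i\cap B$ — is as small as possible, and suppose for contradiction that some $T_i$ is crossed. Since $|T_i|=3$, exactly one side meets $T_i$ in a single element; say $T_i\cap B=\{t\}$ and $T_i\cap A=\{a,b\}$. As $T_i=\{a,b,t\}$ is a circuit, $t\in\cl(\{a,b\})\subseteq\cl(A)$, so $\rk(A\cup\{t\})=\rk(A)$ and hence
\[
\lambda(A\cup\{t\}) \;=\; \rk(A)+\rk(B\setminus\{t\})-\rk(E)\;\le\;\rk(A)+\rk(B)-\rk(E)\;=\;\lambda(A)\;=\;2 .
\]
Moreover, because the $T_j$ are mutually disjoint, moving $t$ from $B$ to $A$ leaves $T_j\cap A$ and $T_j\cap B$ unchanged for every $j\neq i$, while $T_i$ becomes entirely contained in $A\cup\{t\}$; thus $(A\cup\{t\},\,B\setminus\{t\})$ has strictly fewer crossed triangles than $(A,B)$.

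It remains to verify that $(A\cup\{t\},\,B\setminus\{t\})$ is again a non-trivial $3$-separation, and this is the crux of the argument. If $\lambda(A\cup\{t\})\le 1$, then $M$ has a $1$- or $2$-separation (both sides have at least two elements), so by Lemma~\ref{prop:1,2separation} $M$ is a $1$- or $2$-sum; in the applications of this lemma the relevant matroid (e.g.\ the simplification of a part of a $3$-sum) is $3$-connected, which excludes this possibility, and more generally one passes to the parts of the sum as in Truemper's treatment. If $\lambda(A\cup\{t\})=2$, then $(A\cup\{t\},\,B\setminus\{t\})$ is an exact $3$-separation whose larger side has size at least $5$; the only borderline case is $|B|=4$, which would give $|B\setminus\{t\}|=3$, and this small configuration is ruled out directly (e.g.\ by a lexicographic choice of the starting separation, or because it is incompatible with $M$ carrying an $R_{12}$-minor together with the prescribed separation structure). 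In every surviving case we obtain a non-trivial $3$-separation with fewer crossed triangles, contradicting the minimality in the choice of $(A,B)$; hence no $T_i$ is crossed. The main obstacle is precisely this last bookkeeping — preserving both exactness ($\lambda=2$) and the size bounds $|A|,|B|\ge4$ across the exchange — which is exactly what is carried out in the proof of Lemma~11.3.17 in~\cite{truemper1992matroid}, from which the statement follows.
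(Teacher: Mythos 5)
Your proposal follows essentially the same route as the paper: the paper does not give a self-contained proof of this lemma either, but sketches exactly your uncrossing step (moving the lone element $t$ of a crossed triangle to the side containing the other two elements, which cannot increase the connectivity since the triangle is a circuit and $t\in\cl(A)$) and attributes the remaining verification to the proof of Lemma 11.3.17 in \cite{truemper1992matroid}, which is precisely where you also send the borderline cases. The only caveat is that your suggested shortcuts for those cases (appealing to $3$-connectivity to exclude $\lambda(A\cup\{t\})\le 1$, or a lexicographic choice for $|B|=4$) are not justified under the lemma's stated hypotheses --- no connectivity is assumed, and in the paper's application the matroid is only almost $3$-connected --- but since both you and the paper ultimately defer exactly this bookkeeping to Truemper, the two treatments match in substance and in level of rigor.
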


We are now ready to prove Theorem~\ref{thm:seymourdecomposition3conn}.
We remark that a similar result appears in \cite{dinitz2014matroid}, where result similar as Lemma~\ref{prop:3separationtriangles} is used. However, \cite{dinitz2014matroid} obtain a decomposition tree that may involve $1$- or $2$-sums.

\begin{proof}[Proof of Theorem \ref{thm:seymourdecomposition3conn}]
Let $M$ be 3-connected regular matroid that is not $R_{10}$. We obtain our final decomposition tree as the result of an iterative procedure, starting from a single node labeled $M$.

At each step, we require that the current tree $\mathcal{T}$ and its labels have the following properties: 

\begin{enumerate}[(i)]
\item each node $v$ of $\mathcal{T}$ is labeled with a regular matroid $M_v$ that is almost 3-connected,
\item each edge $vw \in E(\mathcal{T})$ has a corresponding $3$-sum $M_v \oplus_3 M_w$ (in particular $v, w$ are adjacent if and only if the $M_v, M_w$ share a triangle $T_{vw}$);
\item performing 3-sums over the all the edges, in arbitrary order, gives $M$ as a result.
\end{enumerate}

If all labels of the current tree $\mathcal{T}$ are graphic or cographic, then we are done. 

Assume otherwise, and let $v$ be a node of $\mathcal{T}$ whose label $M_v$ is not graphic or cographic.  Thanks to Lemmas \ref{prop:R10,R12} and \ref{lem:almost3con}, $M_v$ has an $R_{12}$ minor, hence a non-trivial $3$-separation. We will choose the separation in such a way no triangle of $M_v$ is ``crossed''. 

More precisely, let $u_1,\dots,u_k$ be neighbours of $v$ in $\mathcal{T}$, and $T_{1},\dots, T_{k}$ be the (disjoint) triangles of $M_v$ involved in the corresponding 3-sums, with $T_i=T_{vu_i}$ for each $i$. Thanks to Lemma \ref{prop:3separationtriangles}, $M_v$ has a non-trivial separation $(A,B)$ such that each $T_i$ is either in $A$ or $B$. 

Let $M_v = M_1\oplus_3 M_2$ be the 3-sum corresponding to separation $(A,B)$. In $\mathcal{T}$, we delete $v$ and add two adjacent nodes $v_1, v_2$, labeled $M_1, M_2$ respectively, and join $u_i$ to $v_1$ if $T_i\subseteq A$, and to $v_2$ otherwise. It is easy to check that, after these modifications, $\mathcal{T}$ still satisfies the required properties. 

Before concluding the proof, we remark that every time a matroid $M_v$ is split into two during the construction, the two corresponding matroids have strictly less elements than $M_v$ (and at least 7), which ensures that the procedure can only be repeated finitely many times. Hence we must at some point have that all labels are graphic or cographic, and the proof is complete.
\end{proof}

\section{Proving the assumptions for the cographic case}

In this section we argue that the assumption made at the beginning of Section \ref{sec:cographic} holds. Fix a 3-connected regular matroid $M$, different from $R_{10}$. In this section, we call any tree $\mathcal{T}$ satisfying the conditions of Theorem \ref{thm:seymourdecomposition3conn} (namely, each node $v \in V(\T)$ is labelled by a graphic or cographic matroid $M_v$, and performing 3-sums over the edges gives back $M$) a \emph{decomposition tree} of $M$. 

Consider a decomposition tree $\T$ of $M$, and a node $v \in V(\T)$ such that $M_v=M^*(G_v)$ is cographic (and not graphic, that is, $G_v$ is not planar). Let us call $v$ \emph{bad} if $G_v$ contains a cut $T_{uv}$ that is involved in some 3-sum and \emph{not} of the form $\delta(w)$ for some degree-$3$ node $w \in V(G_v)$. Also, we call $T_{uv}$ a \emph{bad} cut of $G_v$. Our goal it to show the following:

\begin{proposition}\label{prop:claws}
Every 3-connected regular matroid $M$ distinct from $R_{10}$ has a decomposition tree without bad nodes.
\end{proposition}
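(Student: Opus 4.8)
The plan is to start from an arbitrary decomposition tree $\mathcal{T}$ of $M$ and repair bad cographic nodes one at a time, decreasing a suitable potential at each step so that the process terminates with a decomposition tree having no bad node. Fix a decomposition tree $\mathcal{T}$, and suppose $v \in V(\mathcal{T})$ is a bad node, so $M_v = M^*(G_v)$ with $G_v$ non-planar, and some triangle $T_{uv} = \delta_{G_v}(S)$ (a $3$-edge cut of $G_v$, since triangles of a cographic matroid are $3$-cuts) is not of the form $\delta_{G_v}(w)$ for a degree-$3$ vertex $w$. The key structural fact I would use is that whenever a connected graph $H$ has a $3$-edge cut $\delta_H(S)$ with both $S$ and $V(H)\setminus S$ having at least two vertices, one can perform a further decomposition ``at this cut'': concretely, $M^*(H)$ is the $3$-sum of $M^*(H_1)$ and $M^*(H_2)$, where $H_1$ is obtained from $H$ by contracting $V(H)\setminus S$ to a single new vertex (of degree~$3$) and $H_2$ symmetrically. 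This is the standard ``splitting a $3$-cut'' operation; I would either cite it (it follows from the interplay between $3$-separations of cographic matroids and $3$-edge cuts of the underlying graph, cf.\ \cite{oxley2006matroid,truemper1992matroid}) or give the short verification that the cycle spaces match and that the new cut $\delta_{H_i}(\text{new vertex})$ is the required common triangle $T$.

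Given this, here is the repair step. Let $v$ be bad with bad cut $T_{uv}=\delta_{G_v}(S)$. If $S=\{w\}$ or $V(G_v)\setminus S=\{w\}$ for a single vertex $w$, then $T_{uv}=\delta(w)$ and $w$ has degree~$3$, contradicting badness; hence both sides have at least two vertices, and the splitting operation applies. Split $G_v$ along $\delta_{G_v}(S)$ as above: replace the node $v$ by two new adjacent nodes $v_1,v_2$ labelled $M^*(H_1)$ and $M^*(H_2)$, with the edge $v_1v_2$ carrying the $3$-sum along the new triangle $T$ (which is $\delta(\text{new vertex})$ in each $H_i$, hence not bad). Every other triangle $T_{w v}$ previously attached at $v$ is a $3$-edge cut of $G_v$; as in the proof of Theorem~\ref{thm:seymourdecomposition3conn}, using Lemma~\ref{prop:3separationtriangles} (or directly, since two edge-disjoint $3$-cuts in a graph do not ``cross'' in the relevant sense, by a standard submodularity-of-cuts argument) we may assume each such $T_{wv}$ lies entirely on one side of the cut $\delta_{G_v}(S)$; reattach the corresponding neighbour $w$ to $v_1$ or to $v_2$ accordingly. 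The result $\mathcal{T}'$ is again a decomposition tree of $M$: each new label is graphic or cographic (a cograph minor of a cograph), and performing all $3$-sums recovers $M_v$ and hence $M$.

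The termination argument is the delicate point, and I expect it to be the main obstacle. Each split strictly increases the number of nodes of the tree, and the number of elements of $M$ is fixed while every label has at least $7$ elements, so the number of nodes is bounded by $|E(M)|$ — hence only finitely many splits can be performed, and the process must halt at a decomposition tree in which no cographic node is bad. One subtlety to check carefully: after a split, a neighbour matroid $M^*(H_i)$ could in principle be \emph{graphic} (planar $H_i$) even though $G_v$ was not, which is fine — such nodes are simply not candidates for being bad; and conversely a bad cut ``inherited'' by $H_i$ is still a genuine $3$-edge cut of $H_i$ with both sides non-singleton or is now of the form $\delta(\text{new vertex})$, so no new badness is created beyond what is resolved. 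Thus, after finitely many repair steps, we obtain a decomposition tree of $M$ with no bad nodes, i.e.\ every triangle $T_i$ involved in a $3$-sum at a cographic node $M^*(G)$ has the form $\delta_G(v_i)$ for a degree-$3$ vertex $v_i$, and (as noted in Section~\ref{sec:cographic}) the $v_i$ are distinct and form a stable set since the $T_i$ are pairwise disjoint. $\qed$
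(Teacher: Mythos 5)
Your overall strategy---repeatedly splitting a bad cographic node along its bad $3$-cut and reattaching the neighbours---matches one half of the paper's argument, but the two steps you treat as routine are exactly where the real work lies, and as stated they fail. First, the ``splitting a $3$-cut'' claim is false at the generality you use it. For $M^*(H)=M^*(H_1)\oplus_3 M^*(H_2)$ to be a legal $3$-sum, the common triangle $T$ must contain no cocircuit of either part and each part must have at least $7$ elements. Badness only says the cut is not $\delta(w)$ for a degree-$3$ vertex; it does not prevent two of the three cut edges from sharing an endpoint. In that case, in one of the two contracted graphs $H_i$ those two edges become parallel, so $T$ contains a $2$-cycle of $H_i$, i.e.\ a cocircuit of $M^*(H_i)$, and the split is not a $3$-sum at all. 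The paper handles precisely this configuration differently: using almost $3$-connectivity (Corollary \ref{cor:almost3congraph}) it finds an edge of the small side that is parallel in $M^*(G)$ to a cut edge and swaps the two (Lemma \ref{prop:parallel}), turning the bad cut into a good one without any split. You also never verify the size condition: one side of a bad cut may carry only one or two edges, and even when the three cut edges are pairwise non-incident the paper needs the non-planarity of $G$ (a $K_{3,3}$ argument) to guarantee the larger side has at least $4$ edges, so that the separation is non-trivial and Lemma \ref{prop:3separation} applies.

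Second, your treatment of the other triangles attached at the node is not sound. Lemma \ref{prop:3separationtriangles} requires an $R_{12}$ minor, which is not available since the matroid here is cographic, and in any case it produces \emph{some} non-crossing separation rather than letting you keep the specific separation induced by your chosen bad cut; and the parenthetical claim that two disjoint $3$-cuts of a graph cannot cross is simply false (see Figure \ref{fig:cross}). This is why the paper proves the dedicated uncrossing Lemma \ref{lem:uncross}: crossing disjoint $3$-cuts necessarily contain a pair of elements parallel in $M^*(G)$, which can be swapped (again via Lemma \ref{prop:parallel}, so $M$ is unchanged) in such a way that the number of crossing pairs strictly decreases; only after all bad cuts are pairwise non-crossing can one decompose along one of them and reattach the neighbouring subtrees consistently. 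Your termination count (node count bounded since every label has at least $7$ elements) is fine for the splitting steps, but without the parallel-swap machinery and the invariant that all labels remain almost $3$-connected cographic matroids, the basic repair step you describe cannot be carried out in general.
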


In order to prove Proposition~\ref{prop:claws}, we will start from any decomposition tree $\mathcal{T}$ and modify it until it has no bad nodes. At each step, we will maintain that $\T$ is a decomposition tree of $M$ and that each matroid $M_v$ for $v \in V(\T)$ is almost 3-connected. We can assume this last condition for the initial tree, see the proof of Theorem \ref{thm:seymourdecomposition3conn}. 

Now, we state some lemmas that will be useful for proving Proposition~\ref{prop:claws}. We recall that all matroids considered in this paper have no loops, as this is implicitly used in the proofs below.
 
 \begin{lemma}\label{lem:almost3coniff}
  A matroid $M$ is almost 3-connected if and only if it is connected, and for any 2-separation $(A,B)$ of $M$ we have that $\rk(A)=1$ or $\rk(B)=1$.
 \end{lemma}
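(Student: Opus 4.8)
The plan is to reduce everything to the connectivity function. Write $N := \si(M)$, regard $N$ as the deletion $M\setminus X$ where $X$ collects the removed parallel elements, and for $e\in E(M)$ let $\bar e\in E(N)$ be the representative parallel to $e$ (well defined since $M$ has no loops); for $Y\subseteq E(N)$ let $\tilde Y := \{e\in E(M):\bar e\in Y\}$ be the union of the parallel classes met by $Y$. I will use two elementary facts. First: deleting elements leaves the rank of the surviving sets unchanged, and adding an element parallel to an already present one leaves ranks unchanged; hence $\rk_N(Y)=\rk_M(\tilde Y)$ for every $Y\subseteq E(N)$, and consequently the connectivity functions agree, $\lambda_N(Y)=\lambda_M(\tilde Y)$, where $\lambda_M(Z):=\rk_M(Z)+\rk_M(E(M)\setminus Z)-\rk_M(E(M))$ and similarly for $N$. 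Second: $\lambda_M$ is symmetric, $\lambda_M(Z)=\lambda_M(E(M)\setminus Z)$, and a connected loopless matroid with at least two elements has no coloop, so removing any single element does not decrease its rank. Recall also that, by Lemma~\ref{prop:1,2separation} and the definitions, $N$ is $3$-connected iff it has neither a $1$- nor a $2$-separation.

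For the implication ``$M$ connected and every $2$-separation has a rank-$1$ side $\Rightarrow$ $M$ almost $3$-connected'' I argue the contrapositive. If $N$ is not $3$-connected it has a $1$- or $2$-separation $(A',B')$; since $N$ is simple, $|A'|\ge 2$ forces $\rk_N(A')\ge 2$, and likewise for $B'$. Put each deleted element on the side of its representative to obtain a partition $(A,B)$ of $E(M)$; by the first fact this is a separation of $M$ of the same order with $\rk_M(A)=\rk_N(A')$ and $\rk_M(B)=\rk_N(B')$. If $(A',B')$ was a $1$-separation then $M$ is disconnected; if it was a $2$-separation then $(A,B)$ is a $2$-separation of $M$ with both sides of rank at least $2$. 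In either case the hypothesis on $M$ fails.

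For the converse, assume $N$ is $3$-connected. Then $M$ is connected, since simplification commutes with $1$-sums and so a $1$-separation of $M$ would yield one of $N$. Let $(A,B)$ be any $2$-separation of $M$ and suppose, for contradiction, that $\rk_M(A)\ge 2$ and $\rk_M(B)\ge 2$. Put $Y:=\{\bar e:e\in A\}$, so $\tilde Y\supseteq A$ is the union of the parallel classes meeting $A$ and $\tilde Y\subseteq\cl_M(A)$; hence $\rk_N(Y)=\rk_M(\tilde Y)=\rk_M(A)\ge 2$ and $|Y|\ge 2$. Since $E(M)\setminus\tilde Y\subseteq B$, we get $\lambda_N(Y)=\lambda_M(\tilde Y)=\rk_M(A)+\rk_M(E(M)\setminus\tilde Y)-\rk_M(E(M))\le\rk_M(A)+\rk_M(B)-\rk_M(E(M))=\lambda_M(A)\le 1$. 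If moreover $|E(N)\setminus Y|\ge 2$, then $(Y,E(N)\setminus Y)$ is a $2$-separation of $N$, a contradiction; running the symmetric argument with $Z:=\{\bar e:e\in B\}$ disposes of the case $|E(N)\setminus Z|\ge 2$ as well. The only remaining case is that at most one parallel class lies entirely in $A$ and at most one entirely in $B$; then $Y$ omits at most one element of $E(N)$, and symmetrically $Z$ omits at most one. Since $N$ is connected, loopless and has at least two elements, it has no coloop, so $\rk_N(Y)=\rk_N(Z)=\rk_N(E(N))$; recalling $\rk_M(A)=\rk_N(Y)$, $\rk_M(B)=\rk_N(Z)$ and $\rk_M(E(M))=\rk_N(E(N))$, we obtain $\lambda_M(A)=\rk_N(E(N))\ge 2$, contradicting that $(A,B)$ is a $2$-separation.

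The crux is exactly this last case, where the parallel classes straddle the separation of $M$ so thoroughly that $N$ does not visibly inherit a $2$-separation by restriction; what makes it go through is that in that situation both $A$ and $B$ are spanning in $M$, which is incompatible with $\lambda_M(A)\le 1$ once one knows the $3$-connected matroid $N$ has no coloop. It then remains only to observe that the handful of degenerate matroids on at most two elements (for which ``$3$-connected'' holds vacuously) satisfy the statement trivially, being connected and admitting no $2$-separation.
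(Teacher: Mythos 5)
Your proof is correct, and its overall strategy is the paper's: ranks are unchanged by adding or removing parallel copies, so separations can be transferred between $M$ and $\si(M)$. Your first implication (lifting a $1$- or $2$-separation of $\si(M)$ back to $M$ by placing each deleted element on the side of its representative, noting that in a simple matroid a side of size $\ge 2$ has rank $\ge 2$) is essentially identical to the paper's ``if'' direction. The genuine difference is in the harder direction. Given a $2$-separation $(A,B)$ of $M$ with $\rk(B)>1$, the paper picks non-parallel elements $a\in A$, $b\in B$, passes to a simplification retaining both, and uses $3$-connectedness of $\si(M)$ twice: ``no $1$-separation'' forces the induced parts to keep the ranks of $A$ and $B$, and ``no $2$-separation'' then forces the $A$-side to be the single element $a$, so $\rk(A)=1$. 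You instead fix an arbitrary simplification, work with the two (possibly overlapping) sets of representatives $Y$ and $Z$, show $\lambda_N\le 1$ for each via the connectivity function, and settle the residual case --- where almost all parallel classes straddle the separation --- by the observation that a connected loopless matroid on at least two elements has no coloop, so both $A$ and $B$ are spanning, contradicting $\lambda_M(A)\le 1$. Both mechanisms are sound; the paper's choice of representatives is a bit shorter and directly yields $\rk(A)=1$, while your version makes the straddling phenomenon explicit and keeps all bookkeeping in the connectivity function. One cosmetic point: your closing sentence about matroids on at most two elements is unnecessary (the only size hypothesis you use, $|E(N)|\ge 2$ in the coloop step, is already forced by $\rk_M(A)\ge 2$), and as stated it is slightly off --- the matroid consisting of two coloops has a $1$-separation, hence is neither connected nor vacuously $3$-connected; the equivalence holds for it only because both sides fail.
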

 
 \begin{proof}
We start by proving the ``only if'' direction. 

First, assume that $M$ has a 1-separation. Then it is easy to see that $M$ has a 1-separation $(A,B)$ such that no element in $A$ is parallel to an element in $B$. But then by deleting elements we obtain that $\si(M)$ has a 1-separation, a contradiction. Hence, $M$ has no $1$-separation.

Now assume that $M$ has a 2-separation $(A,B)$. We may assume that $\rk(B)>1$. Hence, $B$ contains two elements that are not parallel. Since parallelism is symmetric and transitive, this implies that there are elements $a\in A$ and $b\in B$ that are not parallel. Then $\si(M)$ can be obtained by deleting all elements parallel to $a$, all elements parallel to $b$, and possibly others. Consider the partition $(A',B')$ of $E(\si(M))$ obtained from $(A,B)$ by deleting parallel elements in this way. Obviously, $a \in A'\subseteq A$, $b \in B'\subseteq B$, and $\rk(A') \leq \rk(A)$ and $\rk(B') \leq \rk(B)$. Since $(A',B')$ cannot be a $1$-separation of $\si(M)$, we have in fact $\rk(A') = \rk(A)$ and $\rk(B') = \rk(B) > 1$. Since $(A',B')$ cannot be a 2-separation of $\si(M)$, it must be that $|A'|=1$. But then $\rk(A)=\rk(A')=1$, which is what we wanted to prove.

Finally, we prove the ``if'' direction. First, we notice that $\si(M)$ has no 1-separation. Indeed, otherwise adding back the parallel elements yields a 1-separation of $M$. Now, assume by contradiction that $\si(M)$ has a 2-separation $(A,B)$. Since $\si(M)$ has no parallel elements and $|A|,|B|\geq 2$, we have $\rk(A),\rk(B)\geq 2$, but then adding back the parallel elements we get a 2-separation of $M$ in which both parts have rank at least $2$, a contradiction.
\end{proof}
 
\begin{corollary}\label{cor:almost3congraph}
Let $M=M^*(G)$ be an almost 3-connected cographic matroid. Then $G$ is 2-connected. If $(A,B)$ is a 2-separation of $M$, then one of $A$ or $B$ consists of edges that form an induced path in $G$.
\end{corollary}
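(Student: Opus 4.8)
The plan is to derive both assertions from Lemma~\ref{lem:almost3coniff} together with the standard dictionary between matroid connectivity of $M(G)$, $M^*(G)$ and graph connectivity of $G$.

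First I would settle the $2$-connectivity of $G$. By Lemma~\ref{lem:almost3coniff} the matroid $M = M^*(G)$ is connected, and since a matroid and its dual have the same connectivity function, $M(G)$ is connected as well. Now I invoke the classical fact (Whitney) that, for a graph with no isolated vertices, $M(G)$ is connected if and only if $G$ is $2$-connected: if $G$ is disconnected then $M(G)$ is a $1$-sum, and if $G$ has a cut vertex then the blocks incident to it exhibit a $1$-separation of $M(G)$; conversely a $1$-separation of $M(G)$ comes from a vertex split. (Isolated vertices may be ignored, and the tiny cases $|V(G)|\le 2$ cause no trouble: since $M$ is loopless, $G$ has no bridge.) Hence $G$ is $2$-connected. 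I would also remark that this conclusion is independent of the chosen graph representing $M$: any $G'$ with $M^*(G')=M$ has $M(G')$ connected, hence is $2$-connected too.

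Next, the statement on $2$-separations. Let $(A,B)$ be a $2$-separation of $M$. By Lemma~\ref{lem:almost3coniff} one of the two sides, say $A$, has $\rk_M(A)=1$. Since $M$ is loopless (standing assumption), a rank-$1$ subset of size at least $2$ is exactly a set of pairwise parallel elements; thus all elements of $A$ are mutually parallel in $M^*(G)$, i.e.\ for every $e,f\in A$ the pair $\{e,f\}$ is a cocircuit of $M(G)$, that is, a minimal edge cut (bond) of size $2$ in $G$. Equivalently, the edges of $A$ are pairwise ``in series'', and by transitivity of parallelism they lie in a single series class of $G$. It then remains to show that such a set of pairwise-in-series edges in a $2$-connected graph is the edge set of an induced path. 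Given $e,f\in A$, write the size-$2$ bond as $\{e,f\}=\delta_G(X)$ with both $X$ and $V\setminus X$ inducing connected subgraphs; every other $g\in A$ then crosses this cut in a restricted way, and iterating over all of $A$ one extracts a nested family of vertex sets whose boundaries are the successive size-$2$ bonds spanned by consecutive edges. This forces the edges of $A$ to line up as $v_0v_1,v_1v_2,\dots,v_{m-1}v_m$ along a path whose internal vertices $v_1,\dots,v_{m-1}$ all have degree $2$ in $G$ (otherwise some pair of edges of $A$ would fail to be a $2$-bond). With the internal vertices of degree $2$, the only conceivable chord of this path is $v_0v_m$; to exclude it I would use almost-$3$-connectedness of $M$ (a chord together with the degree-$2$ path would, after suppressing the degree-$2$ vertices, yield a parallel pair or a coloop in $\si(M)$, contradicting $3$-connectedness of the simplification), so the path is induced.

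The main obstacle is precisely this last structural step: converting ``pairwise in series'' into a single path — ruling out the edges of $A$ forming a cycle, a matching, or a branching configuration — and then upgrading ``path'' to ``induced path''. Both parts genuinely require that $G$ be $2$-connected and that $M$ be almost $3$-connected, not merely connected; the bookkeeping with crossing versus nested $2$-edge-cuts, and the small degenerate cases, are where the care lies. The remainder is a routine translation through matroid duality.
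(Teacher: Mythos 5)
Your overall route is the same as the paper's: both assertions are reduced to Lemma~\ref{lem:almost3coniff}, the first via ``$M^*(G)$ connected iff $M(G)$ connected iff $G$ $2$-connected'', the second by taking the rank-$1$ side $A$, whose elements are then pairwise parallel in $M^*(G)$, i.e.\ pairwise $2$-bonds of $G$. Up to there you match the paper's argument. Your further observation that the \emph{induced} part really uses almost $3$-connectedness is correct and is glossed over in the paper's two-line proof; one small repair: after suppressing the degree-$2$ vertices, a chord between the ends of the path produces a $2$-element \emph{cocircuit} (a series pair) of $\si(M)$, not a parallel pair, but either is impossible in a $3$-connected matroid with at least four elements, so your contradiction stands.

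The genuine gap is exactly at the step you flag as the main obstacle and then assert: pairwise-in-series does \emph{not} force the edges of $A$ to line up as a path. A rank-$1$ side of a $2$-separation is only a subset of a series class of $G$, and it may be a non-contiguous subset. Concretely, let $G$ be $K_4$ on $\{a,b,c,d\}$ with the edge $ab$ subdivided twice into $a$--$x$--$y$--$b$. Then $\si(M^*(G))\cong M^*(K_4)$ is $3$-connected, so $M^*(G)$ is almost $3$-connected; and $A=\{ax,yb\}$ is a parallel pair of $M^*(G)$ (it is the bond $\delta(\{x,y\})$), so $(A,E\setminus A)$ is a $2$-separation with $\rk(A)=1$, yet $A$ consists of two disjoint edges and the other side contains the triangle on $\{b,c,d\}$, so neither side forms a path. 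Your ``nested family of vertex sets'' iteration cannot rule this out, because nothing forces consecutive edges of $A$ to share a vertex. What your argument (and the paper's one-sentence assertion) actually yields is the weaker, and sufficient, statement that $A$ is \emph{contained in} the edge set of a path of $G$ all of whose internal vertices have degree $2$, and that this path is induced; this is what the application in Proposition~\ref{prop:claws} needs, since there the relevant side $E_2$ is the full edge set of the connected induced subgraph $G[V_2]$, which restores contiguity. To make your write-up correct, either prove this containment version or add the hypothesis that the rank-$1$ side spans a connected subgraph.
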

\begin{proof}
Both parts follow easily from Lemma \ref{lem:almost3coniff}. First, $M$ is connected, hence $G$ is 2-connected. Second, without loss of generality, we may assume that $\rk(A)=1$. Hence any two edges in $A$ form a minimal cut of $G$. This implies that the edges in $A$ must form an induced path in $G$.
\end{proof}
 
\begin{lemma}\label{lem:almost3con3sum}
If $M$ is a binary, almost 3-connected matroid and $M=M_1\oplus_3 M_2,$ then $M_1$ and $M_2$ are almost 3-connected.
\end{lemma}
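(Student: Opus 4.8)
The plan is to use the characterization of almost 3-connectedness provided by Lemma~\ref{lem:almost3coniff}: a binary matroid $N$ is almost 3-connected if and only if $N$ is connected and every 2-separation $(A,B)$ of $N$ has $\rk(A)=1$ or $\rk(B)=1$. So I need to verify two things for each $M_i$ ($i\in[2]$): first, that $M_i$ is connected; second, that every 2-separation of $M_i$ has a rank-$1$ side. By symmetry it suffices to argue for $M_1$; write $T=\{\alpha,\beta,\gamma\}$ for the common triangle and recall $E(M_1)=A_1\cup T$ where $(A_1,A_2)$ is the non-trivial 3-separation of $M$ associated with the 3-sum (so $|A_i|\geq 4$), and $\rk(A_1)+\rk(A_2)=\rk(M)+2$.

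\emph{Connectedness of $M_1$.} Suppose $M_1$ has a 1-separation $(X,Y)$, i.e.\ $\rk(X)+\rk(Y)=\rk(M_1)$. Since $T$ is a triangle (a circuit) of $M_1$, it is connected in $M_1$, so $T$ lies entirely within $X$ or within $Y$; say $T\subseteq X$. Then I claim $(X\setminus T\cup\text{(nothing)},\,\ldots)$ — more carefully, I would push this 1-separation of $M_1$ through the 3-sum: using Lemma~\ref{lem:3sumfacts}(iv) and the description of cycles of $M=M_1\oplus_3 M_2$, a 1-separation of $M_1$ with $T$ on one side yields a 1- or 2-separation of $M$ that is ``trivial enough'' to contradict $M$ being almost 3-connected (recall $M$ has no 1-separation, and its 2-separations all have a rank-$1$ side by Lemma~\ref{lem:almost3coniff}). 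The cleanest route is: replacing $M_1$ by $M_1$ and $M_2$ by $M_2$, the separation $(X\setminus T,\ (Y)\cup A_2)$ or $(Y,\ (X\setminus T)\cup A_2)$ of $M$ has small rank defect; one checks it is a $1$- or $2$-separation of $M$ contradicting almost 3-connectedness unless $Y$ (the side avoiding $T$) is trivial, but $Y$ cannot be made arbitrarily small because of the rank bookkeeping — here I need to be a little careful and may instead argue directly that $\rk(Y)\geq 1$ forces a genuine separation of $M$.

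\emph{2-separations of $M_1$ have a rank-$1$ side.} Let $(X,Y)$ be a 2-separation of $M_1$, so $\rk(X)+\rk(Y)=\rk(M_1)+1$. Again $T$ is connected, so $T\subseteq X$ or $T\subseteq Y$; say $T\subseteq X$. Form the partition of $E(M)=E(M_1)\triangle E(M_2)$ given by $(X':=(X\setminus T)\cup A_2,\ Y':=Y)$ — note $Y=Y'\subseteq A_1$ avoids $T$. Using $\rk(M)=\rk(M_1)+\rk(M_2)-2$ (Lemma~\ref{lem:3sumfacts}(i)) and the fact that adding back $A_2$ across the triangle changes rank in a controlled way (the rank of $X'$ in $M$ equals $\rk_{M_1}(X)-2+\rk_{M_2}(\cdot)$ type identity, which I would make precise via Lemma~\ref{lem:3sumflats} applied to the flat spanned by $X'$), I get that $(X',Y')$ is a $k$-separation of $M$ for $k\leq 2$. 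Since $M$ is almost 3-connected, either it is not a separation at all (forcing $|Y'|\leq 1$, impossible if $\rk(Y)\geq 2$ since then $|Y|\geq 2$ — wait, I actually want to conclude $\rk(Y)=1$), or it is an exact 2-separation with $Y'$ on the rank-$1$ side, giving $\rk_M(Y')=1$ and hence $\rk_{M_1}(Y)=1$ as $Y'=Y\subseteq A_1=E(M_1)\setminus T$ and restrictions to subsets of $E(M_1)$ agree (Lemma~\ref{lem:3sumfacts}(iv)). The case $T\subseteq Y$ is symmetric but needs a small extra observation: then $Y$ contains the triangle so $\rk(Y)\geq 2$, and I must show $\rk(X)=1$; here $X\subseteq A_1$ avoids $T$ entirely, and the same transfer argument applied with roles swapped yields $\rk(X)=1$.

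\emph{Main obstacle.} The delicate point is the rank bookkeeping when transferring a separation of $M_1$ across the 3-sum: I need a clean statement of how $\rk_M(Z\cup A_2)$ relates to $\rk_{M_1}(Z)$ and $\rk_{M_2}$ for $Z\subseteq A_1$, depending on $|(\text{span of }Z)\cap T|\in\{0,1,3\}$. This is exactly the content packaged in Lemma~\ref{lem:3sumflats} (the three cases for a connected flat), so the plan is to reduce to closed sets and invoke that lemma, rather than manipulate ranks by hand. Once the transfer identity is pinned down, both required properties of $M_1$ follow by contradiction with the almost 3-connectedness of $M$ via Lemma~\ref{lem:almost3coniff}, and the symmetric argument handles $M_2$.
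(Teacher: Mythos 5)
Your high-level plan coincides with the paper's: apply the characterization in Lemma~\ref{lem:almost3coniff} and turn a bad separation of $M_1$ into a forbidden separation of $M$. However, two steps are genuinely gapped. The most concrete error is the claim that, since the triangle $T$ is connected, it must lie entirely inside one side of a 2-separation $(X,Y)$ of $M_1$: connectedness of a circuit only prevents it from crossing a \emph{1-separation}. A circuit can perfectly well cross a 2-separation (the $4$-element circuit of the graphic matroid of a $4$-cycle crosses the 2-separation into opposite edge pairs), so the case $|T\cap X|=2$, $T\cap Y=\{b\}$ must be addressed. The paper does so by moving $b$ across: the two triangle elements in $X$ span $b$, so $\rk(X+b)=\rk(X)$, and $(X+b,Y-b)$ is either a 1-separation of $M_1$ (already excluded by the connectedness part) or again a 2-separation with both ranks $>1$ and now $T\subseteq X+b$, after which the transfer argument can be applied.

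The second gap is the transfer step itself, which you flag as the main obstacle but do not carry out, and the tool you propose is not adequate: Lemma~\ref{lem:3sumflats} only describes \emph{connected flats of $M$} in terms of flats of $M_1,M_2$; it does not give $\rk_M\bigl(Z\cup(E(M_2)\setminus T)\bigr)$ for an arbitrary $Z\subseteq E(M_1)\setminus T$, since the relevant closure in $M$ need not be a connected flat. The paper sidesteps rank bookkeeping entirely: once $T\subseteq X$, it checks from the cycle description of binary sums that every cycle of $M$ has the form $(C_X\Delta C_2)\Delta C_Y$ with $C_X,C_Y,C_2$ cycles of $M_1\restrict X$, $M_1\restrict Y$, $M_2$ respectively, hence $M$ is \emph{literally} the 1-sum (resp.\ 2-sum) of $M\restrict((X\cup E(M_2))\setminus T)$ and $M\restrict Y$; this contradicts connectedness of $M$ in the 1-separation case, and in the 2-separation case yields a 2-separation of $M$ with both ranks $>1$, contradicting Lemma~\ref{lem:almost3coniff}. (Your worry in the 1-separation case that the conclusion might fail ``unless $Y$ is trivial'' is unfounded --- a 1-separation of $M$ needs only both sides nonempty --- but it reflects that the transfer identity was never pinned down.) Until the crossing-triangle case and the transfer step are made precise, the proof is incomplete.
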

\begin{proof}
We use Lemma \ref{lem:almost3coniff} to argue that $M_1$ (hence, by simmetry, $M_2$) is almost 3-connected.
First, assume by contradiction that $M_1$ has a 1-separation $(A,B)$. Since $M$ is connected, $|A|, |B|\geq 2$ (if for instance $A = \{a\}$ then $a$ is a loop or coloop of $M_1$, hence of $M$). Let $T := E(M_1)\cap E(M_2)$, as usual. By symmetry, we may assume that $|A \cap T| > |B \cap T|$. By moving at most one element from $B$ to $A$, we can assume that $T \subseteq A$. 

We claim that $M=M \restrict (A \cup E(M_2)) \setminus T \oplus_1 M\restrict B$, in contradiction with the connectedness of $M$. Indeed, using the definitions of 1-sum (or 1-separation) and 3-sum, we have that $C \subseteq E(M)$ is a cycle of $M$ if and only if $C=(C_A\Delta C_B)\Delta C_2$, where $C_2$ a cycle of $M_2$ and $C_A$, $C_B$ are (disjoint) cycles of $M_1 \restrict A, M_1 \restrict B$ respectively. This is equivalent to $C=(C_A\Delta C_2)\Delta C_B$, with $C_A \Delta C_2$ being a cycle of $M \restrict (A\cup E(M_2))\setminus T$ and $C_B$ being a cycle of $M \restrict B$, implying the claim. 
 
Now, assume that $M_1$ has an exact 2-separation $(A,B)$, with $\rk(A) > 1$ and $\rk(B) > 1$, and say that $|T\cap A|\geq 2$. We argue that we can assume, as before, that $T\subseteq A$. If not, then $T\cap B=\{b\}$ for some element $b$, and $\rk(A)=\rk(A+b)$. If $|B|\leq 2$ or $\rk(B-b)\leq 1,$ one can see that $(A+b, B-b)$ is a 1-separation of $M_1$, a contradiction as shown above. 
Hence $|B|\geq 3$, and $(A+b, B-b)$ is still a 2-separation with $\rk(B-b)>1$. Now, similarly as above, we have that a set $C$ is a cycle of $M$ if and only if $C=(C_A\Delta C_B)\Delta C_2$, with $C_A$, $C_B$ and $C_2$ as above. This is clearly equivalent to $C=(C_A\Delta C_2)\Delta C_B$. Therefore, $M$ is the 2-sum of $M\restrict(A\cup E(M_2) \setminus T$ and $M \restrict B$. Hence $M$ has a 2-separation $((A\cup E(M_2))\setminus T, B)$ with $\rk((A\cup E(M_2))\setminus T)>1$ and $\rk(B)>1$, a contradiction.
\end{proof}
 
In the proof of Proposition \ref{prop:claws}, we will consider a bad node $v$ and show that the cographic matroid $M_v$ can be decomposed as a 3-sum. Hence we will need an analogous argument as in Lemma \ref{prop:3separationtriangles}, which cannot be applied to cographic matroids. This time, instead of modifying a 3-separation so that it does not cross any triangle, we will modify the triangles by swapping (i.e. exchanging the name of) two parallel elements, as this does not affect any 3-sum in which $M_v$ is involved. 

\begin{lemma}\label{prop:parallel}
  Let $M=M_1\oplus_3 M_2$, $T=E(M_1)\cap E(M_2)$, and let $\alpha'$ be an element of $M_1$ that is parallel to $\alpha\in T$. Consider the matroid $M_2'$ obtained from $M_2$ by renaming $\alpha$ as $\alpha'$. Then $M$ is isomorphic to $M_1\oplus_3 M_2'$.
\end{lemma}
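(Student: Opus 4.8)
The plan is to verify that the two matroids $M_1 \oplus_3 M_2$ and $M_1 \oplus_3 M_2'$ have exactly the same cycles, after identifying $M_2'$ with $M_2$ under the bijection that swaps the labels $\alpha$ and $\alpha'$. First I would set up the ground sets carefully: since $\alpha' \in E(M_1) \setminus T$ and $\alpha \in T$, the element $\alpha'$ belongs to $E(M) = E(M_1)\Delta E(M_2)$, whereas $\alpha$ does not; after renaming $\alpha$ as $\alpha'$ in $M_2$ we obtain $M_2'$ with $E(M_2') = (E(M_2)\setminus\{\alpha\})\cup\{\alpha'\}$, and $T' := E(M_1)\cap E(M_2') = (T\setminus\{\alpha\})\cup\{\alpha'\} = \{\alpha',\beta,\gamma\}$. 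One checks directly that $T'$ is a triangle of $M_1$ (because $\alpha,\alpha'$ are parallel in $M_1$, so $\{\alpha',\beta,\gamma\}$ is a circuit of $M_1$ whenever $\{\alpha,\beta,\gamma\}$ is) and of $M_2'$ (since it is the image of a triangle of $M_2$ under a relabeling), and that the no-cocircuit and size conditions are inherited, so $M_1 \oplus_3 M_2'$ is indeed a well-defined $3$-sum. Note $E(M_1\oplus_3 M_2') = E(M_1)\Delta E(M_2') = E(M)\setminus\{\alpha'\}\cup\{\alpha'\}$ wait---more carefully, $E(M_1)\Delta E(M_2')$ removes $T'$ and keeps everything else, which is the same ground set $E(M)$ as before, because $\alpha'$ was in $E(M_1)$ only, $\alpha$ in $E(M_1)$ only as well originally... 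I will just spell out that the symmetric differences agree as sets.

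The core of the argument is a cycle-by-cycle comparison. By the definition of $3$-sum, $C$ is a cycle of $M_1\oplus_3 M_2$ iff $C = C_1 \Delta C_2$ with $C_i$ a cycle of $M_i$ and $C_1\cap T = C_2\cap T$. I want to produce a bijection between such pairs $(C_1,C_2)$ and pairs $(C_1, C_2')$ with $C_2'$ a cycle of $M_2'$ and $C_1\cap T' = C_2'\cap T'$, which moreover preserves the resulting set $C_1\Delta C_2 = C_1\Delta C_2'$ (as a subset of the common ground set $E(M)$). The idea is: given a cycle $C_1$ of $M_1$, if $\alpha\notin C_1$ then also work with $C_1$ unchanged on the $M_1$ side and push the relabeling through $C_2$; if $\alpha\in C_1$, use that $\{\alpha,\alpha'\}$ is a circuit of $M_1$ to replace $C_1$ by $C_1 \Delta \{\alpha,\alpha'\}$, which is again a cycle of $M_1$ and swaps which of $\alpha,\alpha'$ it contains. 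On the $M_2$ side, the relabeling map sends cycle $C_2$ of $M_2$ to the cycle $C_2'$ of $M_2'$ obtained by replacing $\alpha$ by $\alpha'$ if present. Tracking the four cases ($\alpha\in C_1$ or not, $\alpha\in C_2$ or not, subject to the compatibility $C_1\cap T = C_2\cap T$) one checks that the compatibility condition for $M_1\oplus_3 M_2'$ holds and that the symmetric difference is unchanged: the elements $\alpha$ never appears in $C_1\Delta C_2$ (it lies in $T$), and $\alpha'$ appears in $C_1\Delta C_2$ iff $\alpha'\in C_1$, which after the transformation equals the condition $\alpha'\in C_1\Delta C_2'$ by the same bookkeeping. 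Hence the cycle families coincide, so the two matroids are equal (hence isomorphic).

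I expect the main obstacle to be purely bookkeeping: making the case analysis over the possible intersections with $T$ versus $T'$ clean, and being careful that ``$C_1\cap T = C_2\cap T$'' is the right compatibility condition to transport, given that $\alpha$ sits in $T$ but $\alpha'$ does not. A cleaner way to organize this, which I would adopt to shorten the write-up, is to use the matrix description: represent $M_1$ and $M_2$ over $\mathbb{F}_2$ so that the columns indexed by $T$ satisfy the triangle relation (they sum to zero, with each pair independent), and observe that since $\alpha,\alpha'$ are parallel in $M_1$ their columns in the $M_1$ representation are equal. Renaming $\alpha$ to $\alpha'$ in $M_2$ corresponds to using the column of $\alpha'$ (= column of $\alpha$) for $\alpha'$ in the glued representation; since the glued representation of the $3$-sum is obtained by the standard ``amalgam'' construction along the triangle columns, replacing the shared triangle column labelled $\alpha$ by an identical column labelled $\alpha'$ manifestly does not change the solution space $Ax=\mathbf 0$ over $\mathbb{F}_2$, i.e.\ does not change the cycles. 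Either route gives the result; I would present the cycle argument as primary since it avoids fixing explicit representations, and remark that it is routine.
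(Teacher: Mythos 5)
Your core idea is the same as the paper's: since $\alpha$ and $\alpha'$ are parallel in $M_1$, a set $C$ is a cycle of $M_1$ if and only if $C\Delta\{\alpha,\alpha'\}$ is, and one transports cycles of the $3$-sum accordingly (the paper's proof is exactly this one-line observation). However, your ground-set bookkeeping is wrong, and it propagates into a false conclusion. We have $E(M)=(E(M_1)\cup E(M_2))\setminus\{\alpha,\beta,\gamma\}$, which contains $\alpha'$ but not $\alpha$, whereas $E(M_1\oplus_3 M_2')=(E(M_1)\cup E(M_2'))\setminus\{\alpha',\beta,\gamma\}=(E(M_1)\cup E(M_2))\setminus\{\alpha',\beta,\gamma\}$, which contains $\alpha$ but not $\alpha'$ (note $\alpha\in E(M_1)\cap E(M_2)$, not in $E(M_1)$ only, and it survives because it is no longer a glue element). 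So the two ground sets do \emph{not} coincide, the cycle families cannot literally coincide, and your conclusion that ``the two matroids are equal'' is false; this is precisely why the lemma only asserts isomorphism. Concretely, in your case analysis the symmetric difference is \emph{not} always unchanged: if $\alpha'\in C_1$ and $\alpha\notin C_1$ (so $\alpha'\in C_1\Delta C_2$), the compatible transported pair is $\bigl(C_1\Delta\{\alpha,\alpha'\},\,C_2'\bigr)$ with $C_2'=C_2$, and its symmetric difference is $(C_1\Delta C_2)\Delta\{\alpha,\alpha'\}$, which contains $\alpha$ in place of $\alpha'$; indeed no cycle of $M_1\oplus_3 M_2'$ can contain $\alpha'$ at all, since $\alpha'$ lies in the new glue triangle $T'$.

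The repair is immediate and turns your argument into the paper's: show that the bijection $\rho:E(M)\to E(M_1\oplus_3 M_2')$ renaming $\alpha'$ as $\alpha$ and fixing all other elements maps cycles onto cycles --- your four cases, applying the $\Delta\{\alpha,\alpha'\}$ trick on the $M_1$-side exactly when the memberships of $\alpha$ and $\alpha'$ in $C_1$ differ, do exactly this --- hence $\rho$ is an isomorphism $M\cong M_1\oplus_3 M_2'$. Your preliminary checks that $T'=\{\alpha',\beta,\gamma\}$ is a triangle of $M_1$ and of $M_2'$ and contains no cocircuit (swapping two parallel elements is an automorphism of $M_1$) are fine, as is the alternative argument via identical columns in a binary representation; but there, too, the conclusion should be an isomorphism given by relabelling the column, not equality of the two matroids.
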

\begin{proof}
In particular, we claim that $M_1\oplus_3 M_2'$ is obtained from $M$ by renaming $\alpha'$ as $\alpha$. This is an immediate consequence of the fact that, since $\alpha,\alpha'$ are parallel in $M_1,$ a set $C$ is a cycle of $M_1$ if and only if $C\Delta \{\alpha,\alpha'\}$ is. 
\end{proof}

Let $G$ be a 2-connected graph.
Consider two disjoint bad cuts of size 3 $T_i$, $T_j$, and let $(E^i_1,E^i_2,T_i)$ be the partition of $E(G)$ induced by $T_i$, and similarly for $E^j_1,E^j_2$. Since the cuts are bad, we have that $E^i_1$, $E^i_2$, $E^j_1$ and $E^j_2$ are all non-empty. We say that $T_j$ \emph{crosses} $T_i$ if $T_j$ has non-empty intersection with both $E^i_1$ and $E^i_2$. It is easy to check, using the fact that $G$ is 2-connected, that $T_j$ crosses $T_i$ if and only if $T_i$ crosses $T_j$. Hence we just say that $T_i$ and $T_j$ cross. We now show, using standard arguments, that there is a simple procedure to ``uncross'' all the bad cuts of $G$. In the following, a 3-cut of $G$ denotes a cut of size 3 of $G$.

\begin{lemma}\label{lem:uncross}
  Let $G$ be a 2-connected graph, and let $T_1,\dots, T_k$ be disjoint $3$-cuts of $G$.
  \begin{enumerate}[(i)]
      \item Assume that $T_i$ and $T_j$ cross. Then there are elements $e\in T_i$, $f\in T_j$ such that $\{e,f\}$ is a minimal cut of $G$ (that is, $e$ and $f$ are parallel in $M^*(G)$), and $T_i-e+f$, $T_j-f+e$ are disjoint 3-cuts that do not cross.
      \item By exchanging parallel elements between pairs of crossing cuts we end up with $k$ disjoint $3$-cuts that mutually do not cross.
  \end{enumerate}
\end{lemma}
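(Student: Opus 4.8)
The plan is to handle part (i) by a count of the edges across the four regions cut out by the two crossing cuts, and then derive part (ii) from part (i) by a termination argument driven by a quadratic potential.

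\emph{Part (i).} Write $T_i=\delta(S)$ and $T_j=\delta(U)$ for vertex sets $S,U\subseteq V(G)$, and put $Q_{11}:=S\cap U$, $Q_{10}:=S\setminus U$, $Q_{01}:=U\setminus S$, $Q_{00}:=V(G)\setminus(S\cup U)$. Let $a:=e(Q_{11},Q_{01})$, $b:=e(Q_{10},Q_{00})$, $c:=e(Q_{11},Q_{10})$, $d:=e(Q_{01},Q_{00})$, where $e(X,Y)$ is the number of $G$-edges between $X$ and $Y$. Since $T_i\cap T_j=\emptyset$, no edge joins $Q_{11}$ to $Q_{00}$ and none joins $Q_{10}$ to $Q_{01}$; hence the edges of $T_i=\delta(S)$ are exactly the $a$-edges (which lie in $E(G[U])$) together with the $b$-edges (which lie in $E(G[V(G)\setminus U])$), so $a+b=3$, and similarly $c+d=3$. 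That $T_i$ and $T_j$ cross means, using the symmetry of ``crossing'' noted before the lemma, precisely that $a,b,c,d\ge 1$. Therefore $\{a,b\}=\{c,d\}=\{1,2\}$, and among the four cuts $\delta(Q_{11}),\delta(Q_{00}),\delta(Q_{10}),\delta(Q_{01})$, of respective sizes $a+c$, $b+d$, $b+c$, $a+d$, exactly one has size $2$. After replacing $S$ by $V(G)\setminus S$ and/or $U$ by $V(G)\setminus U$ (which changes neither $T_i$ nor $T_j$) we may assume it is $\delta(Q_{11})$; since each quadrant cut splits into its edges lying in $T_i$ and those lying in $T_j$, writing $\delta(Q_{11})=\{e,f\}$ we get one edge in $T_i$ and the other in $T_j$, say $e\in T_i$ and $f\in T_j$. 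As $G$ is $2$-connected it is bridgeless, so $\{e,f\}$ is a minimal cut, i.e.\ $e$ and $f$ are parallel in $M^*(G)$.

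Still in (i), I would compute $T_i-e+f=\delta(S)\triangle\delta(Q_{11})=\delta(S\triangle Q_{11})=\delta(S\setminus U)$ and, symmetrically, $T_j-f+e=\delta(U\setminus S)$; these are two of the four quadrant cuts, hence of size $3$. Their union is $T_i\cup T_j$ and they are disjoint (neither acquires a repeated edge), so they are disjoint $3$-cuts which moreover remain disjoint from every other $T_\ell$ because $(T_i-e+f)\cup(T_j-f+e)=T_i\cup T_j$. Finally, every edge of $\delta(U\setminus S)$ is incident with $U\setminus S$, a set disjoint from $S\setminus U$; hence no edge of $T_j-f+e$ lies in $E(G[S\setminus U])$, so $T_j-f+e$ does not meet both sides of the cut $T_i-e+f=\delta(S\setminus U)$, i.e.\ the two new cuts do not cross.

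\emph{Part (ii).} The plan is to iterate the exchange from (i) and argue that it must stop. For the current family of $3$-cuts $\delta(S_1),\dots,\delta(S_k)$, set
$$
\Phi:=\sum_{m=1}^{k}|S_m|\bigl(|V(G)|-|S_m|\bigr),
$$
a positive integer that does not depend on the choice of shores. An exchange leaves $S_m$ untouched for $m\ne i,j$ and replaces $\{S_i,S_j\}=\{S,U\}$ by $\{S\setminus U,U\setminus S\}$; expanding in terms of the vertex counts $q_{11},q_{10},q_{01},q_{00}$ of the four quadrants gives
\begin{align*}
\Delta\Phi &= q_{10}(q_{11}+q_{01}+q_{00})+q_{01}(q_{11}+q_{10}+q_{00})\\
&\qquad -(q_{11}+q_{10})(q_{01}+q_{00})-(q_{11}+q_{01})(q_{10}+q_{00})\;=\;-2\,q_{11}q_{00}\;\le\;-2,
\end{align*}
since $q_{11},q_{00}\ge 1$ (all four quadrants are nonempty whenever $T_i$ and $T_j$ cross). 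Thus $\Phi$ strictly decreases with every exchange, so after finitely many steps no crossing pair remains; by (i) the family stays a collection of $k$ pairwise-disjoint $3$-cuts throughout, which proves (ii).

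I expect the only real subtlety to be the choice of potential in (ii): an exchange can create a new crossing between one of the modified cuts and some $T_\ell$ that crossed neither $T_i$ nor $T_j$ beforehand, so a naive count of crossing pairs need not decrease; the quantity $\sum_m|S_m|(|V(G)|-|S_m|)$ works precisely because its variation involves only the two cuts being uncrossed. The edge-counting in (i) is routine once the quadrant picture is set up.
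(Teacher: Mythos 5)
Your proof is correct. Part (i) is in substance the paper's own argument: you set up the same four-quadrant picture, use disjointness to kill the two ``diagonal'' edge classes, and locate a $2$-edge quadrant cut $\{e,f\}$ whose swap turns $T_i,T_j$ into two non-crossing quadrant cuts; your explicit counting ($a+b=c+d=3$, all four parameters $\geq 1$, hence $\{a,b\}=\{c,d\}=\{1,2\}$) just makes the paper's ``without loss of generality'' bookkeeping quantitative, and your one-line appeal to $2$-connectedness to get minimality of $\{e,f\}$ is fine (a $2$-edge cut in a bridgeless graph is a single bond). Part (ii) is where you genuinely diverge: the paper proves that the number of crossing pairs strictly decreases, via the claim that any third cut $T_h$ that crosses a new cut $T_i'$ but not $T_i$ must have crossed $T_j$ and cannot cross $T_j'$; you instead use the potential $\Phi=\sum_m|S_m|\,(|V(G)|-|S_m|)$, which is well defined because shores are determined up to complementation in a connected graph, and which drops by $2\,q_{11}q_{00}\geq 2$ at each exchange (your computation is right, and it is applied with the shores normalized as in your part (i), which is legitimate since $\Phi$ is complementation-invariant). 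Your potential argument buys locality --- its variation involves only the two cuts being uncrossed, so no case analysis of third cuts is needed --- at the cost of being slightly less informative about how crossings evolve. One caveat: your motivating remark that ``a naive count of crossing pairs need not decrease'' is actually false --- the paper's uncrossing claim shows the crossing count does strictly decrease --- but since your proof never relies on that remark, it does not affect correctness.
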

\begin{proof}
(i) Denote by $(V_1^i,V_2^i)$ the partition of $V(G)$ induced by $T_i$, and define $(V_1^j,V_2^j)$ analogously, so that the subgraph of $G$ induced by $V^i_1$ has edge set $E_1^i$, and similarly for $V^i_2, E^i_2$, $V^j_1, E^j_1,$ and $V^j_2,E^j_2$. We claim that each of $V_1^i, V_2^i$ has non-empty intersection with each of $V_1^j, V_2^j$: indeed, if, say, $V_1^i\cap V_1^j=\emptyset$, i.e.\ $V^i_1\subseteq V_2^j$, then the endpoints of any edge of $T_j$ cannot be both in $V^i_1$ implying that $T_j\cap E^i_1=\emptyset$. 

We can assume without loss of generality that two edges of $T_j$ have endpoints in $V_1^i$, and one, which we denote by $f$, has endpoints in $V_2^i$ (see Figure \ref{fig:cross}). In the same way, we can assume that one edge $e$ of $T_i$ has its endpoints in $V_1^j$, and two in $V_2^j$. But then, $e,f$ form a cut of $G$, and $T_i-e+f=\delta(V_2^i\cap V_2^j)$ and $T_j-f+e=\delta(V_1^i\cap V_1^j)$ are two disjoint 3-cuts that do not cross.\medskip

(ii) We show that, whenever we swap two elements between two cuts that cross, the total number of 3-cuts that cross strictly decreases, hence by repeating this we must uncross all the cuts. To this end, let $T_i, T_j$ be crossing just as above, and denote by $T_i'=\delta(V_2^i\cap V_2^j), T_j'=\delta(V_1^i\cap V_1^j)$ the ``new" 3-cuts after the swap. Assume that there is another 3-cut $T_h$ that does not cross $T_i$, but crosses $T_i'$. We claim that then $T_h$ does not cross $T_j'$, and crosses $T_j$, hence by replacing $T_i, T_j$ with $T_i',T_j'$ in our family of 3-cuts the number of cuts that cross strictly decreases. Indeed, since $T_h$ crosses $T_i'$, it has non-empty intersection with both $E_2^i\cap E_2^j$ and $E^i_1(G)\cup E^j_1\setminus (T_i\cup T_j)$; but, since $T_h$ does not cross $T_i$, this implies that it is contained in $E^i_2,$ and has non-empty intersection with $E_2^i\cap E_1^j$. But then $T_h$ crosses $T_j$, and it does not cross $T_j'$, as claimed.
\end{proof}

\begin{figure}[h]
    \centering
    
    \begin{tikzpicture}
    \tikzstyle{graph}=[draw, rounded corners=5, minimum size=2.5cm, anchor=center] 
    \tikzstyle{vertex} = [draw, circle, text centered, anchor=center, fill=white]
 
    \node (V) [graph,minimum size=5cm] {};
    \draw (V.west) -- (V.east);
   \draw (V.north) -- (V.south);
    
    \node (v1i) [xshift=-30, yshift=9] at (V.north) {$V^i_1$};
     \node (v2i) [xshift=30, yshift=9] at (V.north) {$V^i_2$};
      \node (v1j) [xshift=-9, yshift=30] at (V.west) {$V_2^j$};
     \node (v2j) [xshift=-9, yshift=-30] at (V.west) {$V_1^j$};
     
     \node (Ti) [color=blue, xshift=0, yshift=-9] at (V.south) {$T_i$};
     \node (Tj) [color=red, xshift=9, yshift=0] at (V.east) {$T_j$};
     
     \node (a) [vertex, xshift=-45,yshift=15] at (V.center) {};
     \node (b) [vertex, xshift=-25,yshift=15] at (V.center) {};
     \node (c) [vertex, xshift=-15,yshift=58] at (V.center) {};
     \node (d) [vertex, xshift=-15,yshift=38] at (V.center) {};
     \node (a') [vertex, xshift=-45,yshift=-15] at (V.center) {};
     \node (b') [vertex, xshift=-25,yshift=-15] at (V.center) {};
     \node (c') [vertex, xshift=15,yshift=58] at (V.center) {};
     \node (d') [vertex, xshift=15,yshift=38] at (V.center) {};
     
    \node (e) [vertex, xshift=-15,yshift=-45] at (V.center) {};
     \node (e') [vertex, xshift=15,yshift=-45] at (V.center) {};
     
    \node (f) [vertex, xshift=45,yshift=15] at (V.center) {};
     \node (f') [vertex, xshift=45,yshift=-15] at (V.center) {};
     
     \node [red, xshift=52,yshift=6] at (V.center) {\large $f$};
      \node [blue, xshift=-5,yshift=-38] at (V.center) {\large $e$};
      
     \draw [color=red,ultra thick] (a)--(a');
      \draw [color=red,ultra thick] (b)--(b');
       \draw [color=blue,ultra thick] (c)--(c');
        \draw [color=blue,ultra thick] (d)--(d');
        \draw [color=blue,ultra thick] (e)--(e');
        \draw [color=red,ultra thick] (f)--(f');

    \end{tikzpicture}
    \caption{Crossing cuts.}
    \label{fig:cross}
\end{figure}
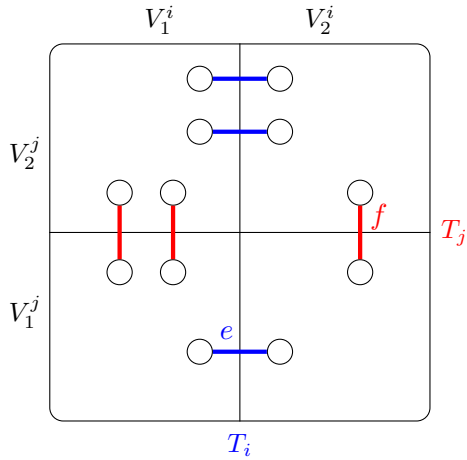

We are now ready to prove Proposition \ref{prop:claws}.

\begin{figure}
    \centering
\begin{tikzpicture}[remember picture]
\tikzstyle{vertex} = [draw, circle,
 text centered, anchor=center, fill=white]

  \tikzstyle{graph} = [ draw, circle, text centered, minimum size=2.3cm]

\node (g1) [graph,  fill=blue!10] {$G_1$};

\node (g2) [graph,  fill=blue!10, xshift=3.25cm,yshift=0cm] at (g1.center) {$G_2$};

\node (b1) [vertex,xshift=1.1cm,yshift=0cm] at (g1.center) {} ;

\node (a1) [vertex,xshift=0cm,yshift=0.45cm] at (b1.north) {} ;

\node (c1) [vertex,xshift=0cm,yshift=-0.45cm] at (b1.south) {} ;

\node (b2) [vertex,xshift=-1.1cm,yshift=0cm] at (g2.center) {} ;

\node (a2) [vertex,xshift=0cm,yshift=0.45cm] at (b2.north) {} ;

\node (c2) [vertex,xshift=0cm,yshift=-0.45cm] at (b2.south) {} ;

\draw [-] (a1) -- (a2);
\draw [-] (b1) -- (b2);
\draw [-] (c1) -- (c2);

\node [xshift=10,yshift=4] at (a1.north east) {$\alpha$};
\node [xshift=10,yshift=4] at (b1.north east) {$\beta$};
\node [xshift=10,yshift=4] at (c1.north east) {$\gamma$};

\node (eq) [xshift=1.55cm, yshift=-0.7cm] at (g1.south) {=};

\node (g1') [graph,  fill=blue!10,xshift=-2.4cm, yshift=-2cm] at (g1.south) {$G_1$};

\node (b1') [vertex,xshift=1.1cm,yshift=0cm] at (g1'.center) {} ;

\node (a1') [vertex,xshift=0cm,yshift=0.45cm] at (b1'.north) {} ;

\node (c1') [vertex,xshift=0cm,yshift=-0.45cm] at (b1'.south) {} ;

\node (v) [vertex,xshift=1cm,yshift=0cm] at (b1'.east) {} ;

\draw [-] (v) -- (a1');
\draw [-] (v) -- (b1');
\draw [-] (v) -- (c1');

\node [xshift=0.08cm, yshift=-1.1cm] at (eq.south) {$\oplus_3$};

\node (g2') [graph,  fill=blue!10,xshift=8.6cm, yshift=0cm] at (g1'.center) {$G_2$};

\node (b1'') [vertex,xshift=-1.1cm,yshift=0cm] at (g2'.center) {} ;

\node (a1'') [vertex,xshift=0cm,yshift=0.45cm] at (b1''.north) {} ;

\node (c1'') [vertex,xshift=0cm,yshift=-0.45cm] at (b1''.south) {} ;

\node (a2'') [vertex,xshift=-1cm,yshift=0cm] at (a1''.center) {} ;

\node (b2'') [vertex,xshift=-1cm,yshift=0cm] at (b1''.center) {} ;

\node (c2'') [vertex,xshift=-1cm,yshift=0cm] at (c1''.center) {} ;

\node (v') [vertex,xshift=-1cm,yshift=0cm] at (b2''.west) {} ;

\draw [-] (v') -- (a2'');
\draw [-] (v') -- (b2'');
\draw [-] (v') -- (c2'');
\draw [-] (a1'') -- (a2'');
\draw [-] (b1'') -- (b2'');
\draw [-] (c1'') -- (c2'');

\node [xshift=10,yshift=4] at (a2''.north east) {$\alpha$};
\node [xshift=10,yshift=4] at (b2''.north east) {$\beta$};
\node [xshift=10,yshift=4] at (c2''.north east) {$\gamma$};

\node [yshift=-2.8cm] at (eq.south) {$=$};

\end{tikzpicture}

\vspace{0.5cm}

\hspace{-0.4cm}\begin{tikzpicture}[remember picture]
\tikzstyle{vertex} = [draw, circle, 
 text centered,anchor=center, fill=white]

  \tikzstyle{graph} = [ draw, circle, text centered, minimum size=2.3cm]

\node (g1') [graph,  fill=blue!10] {$G_1$};

\node (b1') [vertex,xshift=1.1cm,yshift=0cm] at (g1'.center) {} ;

\node (a1') [vertex,xshift=0cm,yshift=0.45cm] at (b1'.north) {} ;

\node (c1') [vertex,xshift=0cm,yshift=-0.45cm] at (b1'.south) {} ;

\node (v) [vertex,xshift=1cm,yshift=0cm] at (b1'.east) {} ;

\draw [-] (v) -- (a1');
\draw [-] (v) -- (b1');
\draw [-] (v) -- (c1');

\node [xshift=1cm] at (v.east) {$\oplus_3$};

\node (w) [vertex,xshift=1.8cm,yshift=0cm] at (v.east) {} ;

\node (b1'') [vertex,xshift=1cm,yshift=0cm] at (w.center) {} ;

\node (a1'') [vertex,xshift=0cm,yshift=0.75cm] at (b1''.north) {} ;

\node (c1'') [vertex,xshift=0cm,yshift=-0.75cm] at (b1''.south) {} ;

\node (a2) [vertex,xshift=1cm,yshift=0cm] at (a1''.east) {} ;

\node (b2) [vertex,xshift=1cm,yshift=0cm] at (b1''.east) {} ;

\node (c2) [vertex,xshift=1cm,yshift=0cm] at (c1''.east) {} ;

\node (w') [vertex,xshift=3cm,yshift=0cm] at (w.east) {} ;

\draw [-] (a1'') -- (w);
\draw [-] (b1'') -- (w);
\draw [-] (c1'') -- (w);

\draw [-] (a1'') -- (a2);
\draw [-] (b1'') -- (b2);
\draw [-] (c1'') -- (c2);

\draw [-] (w') -- (a2);
\draw [-] (w') -- (b2);
\draw [-] (w') -- (c2);

\node [xshift=10,yshift=4] at (a1''.north east) {$\alpha$};
\node [xshift=10,yshift=4] at (b1''.north east) {$\beta$};
\node [xshift=10,yshift=4] at (c1''.north east) {$\gamma$};

\node [xshift=1cm, yshift=0cm] at (w'.east) {$\oplus_3$};

\node (g2') [graph,  fill=blue!10,xshift=11.8cm, yshift=0cm] at (g1'.center) {$G_2$};

\node (b1'') [vertex,xshift=-1.1cm,yshift=0cm] at (g2'.center) {} ;

\node (a1'') [vertex,xshift=0cm,yshift=0.45cm] at (b1''.north) {} ;

\node (c1'') [vertex,xshift=0cm,yshift=-0.45cm] at (b1''.south) {} ;

\node (v') [vertex,xshift=-1cm,yshift=0cm] at (b1''.west) {} ;

\draw [-] (v') -- (a1'');
\draw [-] (v') -- (b1'');
\draw [-] (v') -- (c1'');
\end{tikzpicture}
\caption{An example of the 3-sum decomposition that we do in the proof of Proposition \ref{prop:claws}. In at most two decompositions, the bad cut $\{\alpha,\beta,\gamma\}$ is moved to a cographic matroid that is also graphic. }
\label{fig:cographic3}
\end{figure}
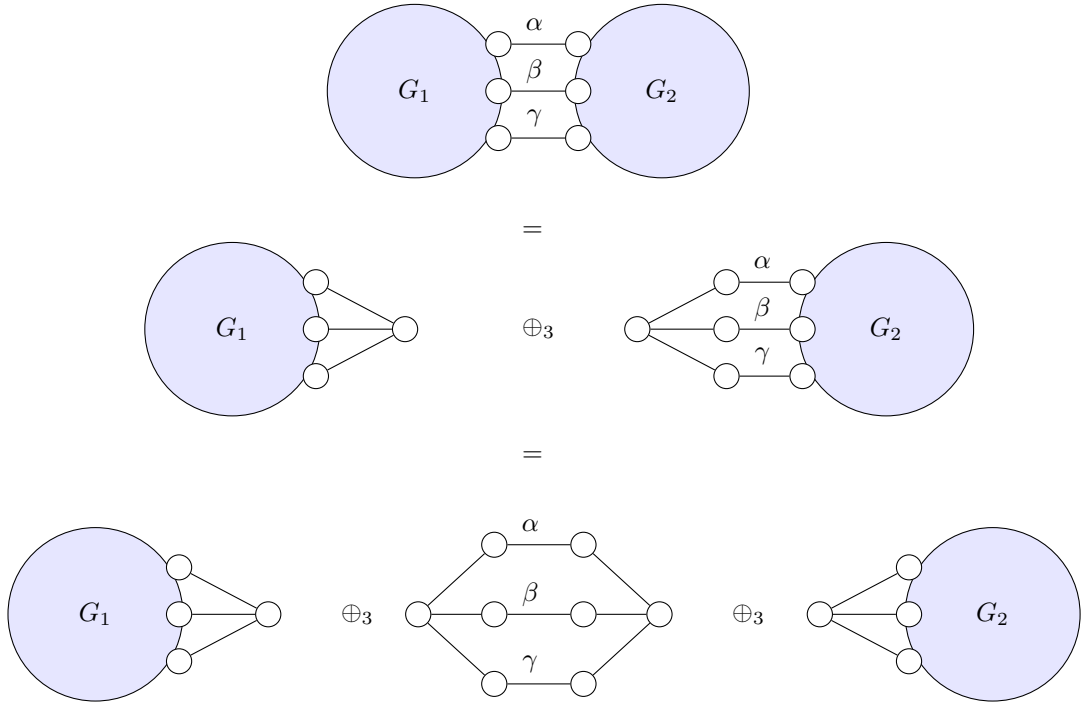

\begin{proof}[Proof of Proposition \ref{prop:claws}]
We start from any decomposition tree $\T$ of $M$ (see Theorem~\ref{thm:seymourdecomposition3conn}). If $\T$ has no bad node, then we are done. We may assume that $\T$ has a bad node. Let $M^*(G)$ be the corresponding cographic matroid. Then $M^*(G)$ is almost 3-connected and not graphic. By Corollary \ref{cor:almost3congraph}, $G$ is 2-connected (and not planar). Graph $G$ has disjoint 3-cuts $T_{1}, \dots, T_{k}$, each involved in a 3-sum, some of which are bad. 

Let $T$ be one of the bad cuts. Denote by $(V_1, V_2)$ the partition of $V(G)$ given by $T = \{\alpha,\beta,\gamma\}$, and by $U_1,U_2$ the sets of endpoints of $\alpha,\beta,\gamma$ that are in $V_1,V_2$ respectively.  Denote by $E_1, E_2$ the edge sets of $G[V_1], G[V_2]$ respectively. By symmetry we may assume that $|U_1|\geq |U_2|$. Notice that, since $T$ is bad and $G$ is 2-connected, none of $E_1, E_2$ can be empty, and we must have $|U_2|\geq 2$.

First, assume $|U_2|=2$. It is easy to check that either $|E_2|=1$ or $(E_1\cup T, E_2)$ is a 2-separation of $M^*(G)$. In the latter case, thanks to Corollary \ref{cor:almost3congraph}, we have that $E_2$ consists of an induced path of $G$. This also trivially holds in the former case. Let $U_2=\{u,v\}$, so that $u$ is incident to one edge of $T$, say $\alpha$, and $v$ to the other two. Denote by $\alpha'$ the edge of $E_2$ incident to $v$. It is immediate to see that $\alpha, \alpha'$ are parallel in $M^*(G)$. Then, we can swap $\alpha$ and $\alpha'$ while leaving $M^*(G)$ and in general $M$ unchanged, by Lemma \ref{prop:parallel}. Doing so, we strictly decrease the number of bad cuts of $T$, as now $T=\delta(v)$ is a good cut, and no other good cut of $G$ is modified by the swap as it cannot have an edge in $E_2$. 

Hence, by iterating the above procedure we can assume that, if $G$ still has a bad cut $T$, it consists of three pairwise non-incident edges. We argue that in this case $M^*(G)$ has a non-trivial 3-separation. Assume without loss of generality that $|E_1|\geq |E_2|$. We show that $|E_1|\geq 4$. If, by contradiction, $|E_1|\leq 3$, we have that $G$ has at most 9 edges, but since $G$ is not planar, $G$ must be $K_{3,3}$. But $K_{3,3}$ cannot have a bad 3-cut, a contradiction. Hence, $|E_1| \geq 4$.

Now it is easy to check that $(E_1,E_2\cup T)$ is a non-trivial 3-separation of $M^*(G)$. Similarly as in the proof of Theorem \ref{thm:seymourdecomposition3conn}, we can only decompose $M^*(G)$ along such separation if there is no other 3-cut $T'$ that crosses the separation (note that such $T'$ would need to be a bad cut).  But thanks to Lemma \ref{lem:uncross} we can swap parallel elements between the bad cuts of $G$ until no two of them cross, and then we can decompose over any non-trivial 3-separation induced by a bad cut.
Hence we can decompose $M^*(G)$ as a 3-sum of two smaller matroids $M_1, M_2$ and modify $\mathcal{T}$ as in the proof of Theorem \ref{thm:seymourdecomposition3conn}. Thanks to Lemma \ref{lem:almost3con3sum}, $M_1, M_2$ are still almost 3-connected, and it is easy to convince oneself (see Figure \ref{fig:cographic3}) that $M_1, M_2$ are still cographic, hence we can iterate the argument until $\T$ has no more bad nodes.
\end{proof}

\end{document}